\documentclass[a4paper,12pt]{amsart}

\setlength{\oddsidemargin}{0cm}
\setlength{\evensidemargin}{0cm}
\setlength{\topmargin}{0cm}
\setlength{\textwidth}{16cm}
\setlength{\textheight}{23cm}

%Robbie's graphics stuff
\usepackage{pdfsync}
\def\usealternativegraphics{false}
\usepackage{verbatim,ifthen}
\usepackage{amsmath,amssymb,latexsym,ifpdf, color}

\usepackage{pgf}

\ifthenelse{\equal{\usealternativegraphics}{false}}{\usepackage{tikz} 
\usetikzlibrary{calc}}{}

\ifthenelse{\equal{\usealternativegraphics}{true}}{
\newenvironment{alternativegraphic}[2][1]{
\ifthenelse{\equal{none}{#2}}{
  \textbf{Picture omitted}
}{
  \ifpdf
    \IfFileExists{#2.pdf}{\includegraphics[page=#1]{#2.pdf}}{\textbf{PDF file not found.}}
  \else
    \IfFileExists{#2.eps}{\includegraphics{#2.eps}}{
      \IfFileExists{#2_#1.eps}{\includegraphics{#2_#1.eps}}{\textbf{EPS file not found.}}
    }
  \fi
}
\comment
}{
\endcomment
}
}{
\newenvironment{alternativegraphic}[2][]{}{}
}
%end Robbie's graphics stuff

\numberwithin{equation}{section}

\newtheorem{thm}{Theorem}[section]
\newtheorem{prop}[thm]{Proposition}
\newtheorem{lemma}[thm]{Lemma}
\newtheorem{cor}[thm]{Corollary}

\theoremstyle{definition}

\theoremstyle{remark}
\newtheorem{rmk}[thm]{Remark}
\newtheorem{ex}[thm]{Example}

\newcommand{\C}{\mathbb{C}}
\newcommand{\N}{\mathbb{N}}

\newcommand{\T}{\mathbb{T}}
\newcommand{\Z}{\mathbb{Z}}

\newcommand{\LL}{\mathcal{L}}
\newcommand{\go}{G^{(0)}}

\newcommand{\calK}{\mathcal{K}}
\newcommand{\red}{\operatorname{r}}
\newcommand{\supp}{\operatorname{supp}}
\newcommand{\Ind}{\operatorname{Ind}}
\newcommand{\dashind}{\operatorname{\!-Ind}}

\newcommand{\tr}{\operatorname{tr}}
\newcommand{\inter}{\operatorname{int}}

\newcommand{{\Kum}}{\operatorname{Kum}}
\newcommand{{\MW}}{\operatorname{\frak{MW}}}

\newcommand{\deltaDD}{\delta_{\operatorname{DD}}}

 % making Iain happy - enumerating in alpha

\begin{document}
\title[Fell algebras]{The equivalence relations of local homeomorphisms\\
and Fell algebras}
\author[Clark]{Lisa Orloff Clark}
\author[an Huef]{Astrid an Huef}
\author[Raeburn]{Iain Raeburn}
\address{Department of Mathematics and Statistics, University of Otago, PO Box 56, Dunedin 9054, New Zealand.}
\email{\{lclark, astrid, iraeburn\}@maths.otago.ac.nz}

\date{\today}

\begin{abstract}  
We study the groupoid $C^*$-algebras $C^*(R(\psi))$ associated to the equivalence relation $R(\psi)$ induced by a quotient map $\psi:Y\to X$. If $Y$ is Hausdorff then $C^*(R(\psi))$ is a Fell algebra, and if both $Y$ and $X$ are Hausdorff then $C^*(R(\psi))$ has continuous trace.   We show that the $C^*$-algebra $C^*(G)$ of a locally compact, Hausdorff and principal groupoid $G$ is a Fell algebra if and only if $G$ is topologically isomorphic to some $R(\psi)$, extending a theorem of Archbold and Somerset. 
The algebras $C^*(R(\psi))$ are, up to Morita equivalence, precisely the Fell algebras with trivial Dixmier-Douady invariant as recently defined by an Huef, Kumjian and Sims. We use a twisted analogue of $C^*(R(\psi))$ to provide examples of Fell algebras with non-trivial Dixmier-Douady invariant.

 \end{abstract}

\subjclass[2000]{46L55}

\keywords{Fell algebra; continuous-trace algebra; Dixmier-Douady invariant; the $C^*$-algebra of a local homeomorphism; groupoid $C^*$-algebra}
\maketitle

\section{Introduction}
Important classes of type I $C^*$-algebras include the continuous-trace
algebras and the liminary or CCR algebras, and lately there has been
renewed interest in a family of liminary algebras called Fell algebras
\cite{aH,C,AaH,aHKS}. A $C^*$-algebra is a \emph{Fell
algebra} if for each $\pi\in \hat A$, there exists  a positive element $a$ such that
$\rho(a)$ is a rank-one projection for all $\rho$ in a neighbourhood of
$\pi$ in $\hat A$. Fell algebras were named by Archbold and Somerset \cite{AS} in respect of Fell's contributions \cite{F}, and had been previously studied by Pedersen as ``algebras of type I$_0$'' \cite[\S6]{Ped}. Proposition~4.5.4 of \cite{D} says that a
$C^*$-algebra has continuous trace if and only if it is a Fell algebra and
its spectrum is Hausdorff.

The Dixmier-Douady class ${\deltaDD}(A)$ of a continuous-trace algebra $A$
identifies $A$ up to Morita equivalence, and $\deltaDD(A)=0$ if and only
if $A$ is Morita equivalent to a commutative $C^*$-algebra
\cite[Theorem~5.29]{tfb}. An Huef, Kumjian and Sims have recently
developed an analogue of the Dixmier-Douady classification for Fell
algebras \cite{aHKS}. Their Dixmier-Douady invariant $\delta(A)$ vanishes
if and only if $A$ is Morita equivalent to the groupoid $C^*$-algebra
$C^*(R(\psi))$ of the equivalence relation associated to a local
homeomorphism $\psi$ of a Hausdorff space  onto the spectrum $\hat A$. (Since this theorem
was not explicitly stated in \cite{aHKS}, we prove it here as
Theorem~\ref{thm-true}.)

So the theory in \cite{aHKS} identifies the $C^*$-algebras $C^*(R(\psi))$
as an interesting family of model algebras. Here we investigate the
structure of the algebras $C^*(R(\psi))$ and their twisted analogues, and use
them to provide  examples of Fell algebras exhibiting certain kinds of
behaviour. In particular, we will produce some concrete examples of Fell
algebras with non-vanishing Dixmier-Douady invariant. 

After some background in \S\ref{sec-back},  we discuss in \S\ref{sec-top} the topological spaces that arise as the spectra of Fell algebras.
In \S\ref{sec:C*Rpsi} we study the locally compact Hausdorff
equivalence relation $R(\psi)$ associated to a surjection $\psi:Y\to X$
defined on a locally compact Hausdorff space $Y$. We show how extra
properties of $\psi$ influence the structure of the groupoid $R(\psi)$. We
show in particular that if $\psi$ is a surjective local homeomorphism of
$Y$ onto a topological space $X$, then $R(\psi)$ is \'etale, principal and
Cartan, with orbit space naturally homeomorphic to $X$. We also show that every principal Cartan groupoid has the form $R(\psi)$ for some quotient map $\psi$.

In \S\ref{Lisa}, we show that the twisted groupoid $C^*$-algebras of the $R(\psi)$ give many examples of Fell algebras. We also show, extending a result of  Archbold and Somerset for \'etale groupoids \cite{AS}, that the $C^*$-algebra $C^*(G)$ of a principal groupoid $G$ is a Fell algebra if and only if $G$ is topologically isomorphic to the relation $R(q)$ determined by the quotient map $q$ of the unit space $\go$ onto $\go/G$. We then illustrate our results with a discussion of the path groupoids of directed graphs, and an example from \cite{HaH} which fails to be Fell in a particularly delicate way. 

In Theorem~\ref{thm-true}, we prove that the Dixmier-Douady class
$\delta(A)$ of a Fell algebra $A$ vanishes if and only if $A$ is Morita
equivalent to some $C^*(R(\psi))$. We then use this to partially resolve a
problem left open in \cite[Remark~7.10]{aHKS}: when $A$ has continuous trace, how is $\delta(A)$ related to the usual Dixmier-Douady invariant $\deltaDD(A)$ of \cite{DD,D,tfb}? In Corollary~\ref{cor:0iff0},
we show that when $A$ has continuous trace, $\delta(A)=0$ if and only if $\deltaDD(A)=0$. We also show that if $A$ is Fell and $\delta(A)=0$, then every ideal $I$ in $A$ with continuous trace has $\deltaDD(I)=0$. Since $\deltaDD$ is
computable, this allows us to recognise some Fell algebras whose invariant
is nonzero.

In \S\ref{examples}, we describe two examples of Fell algebras which we
have found instructive. The first is a Fell algebra whose spectrum fails
to be paracompact in any reasonable sense, even though the algebra is
separable. The second set of examples are Fell algebras $A$ with nonzero invariant $\delta(A)$ and non-Hausdorff spectrum (so that they are not continuous-trace algebras). We close \S\ref{examples} with a brief
epilogue on how we found these examples and what we have learned from
them.

We finish with two short appendices. The first concerns the different
twisted groupoid algebras appearing in this paper. We mainly use Renault's algebras associated to a $2$-cocycle $\sigma$ on $G$ from \cite{Ren}, but the proof of Theorem~\ref{thm-true} uses
the twisted groupoid algebra associated to a twist $\Gamma$ over
$G$ from \cite{K}, and the proof of Theorem~\ref{Cartan=Fell} uses yet another version from \cite{MW}. In Appendix~\ref{app-twists2}, we show
that when $\Gamma$ is the twist associated to a continuous cocycle, the
three reduced $C^*$-algebras are isomorphic. 
In the last appendix, we describe the continuous-trace ideal in an arbitrary $C^*$-algebra. In the end, we did not need this result, but we think it may be of some general interest: we found it curious that the ideas which work for
transformation group algebras in \cite[Corollary~18]{green} and \cite[Theorem~3.10]{aHW} work equally well in arbitrary $C^*$-algebras.

%%%%%%%%%%%%%%%%%%%%%%%%%%%%%%%%%%%%%%%%%%%%%%%%%%%%%%%%%%%%
\section{Notation and background}\label{sec-back}

 A \emph{groupoid} $G$ is a small category in which every morphism is invertible.  We write $s$ and $r$ for the
domain  and range maps in $G$.  The set $\go$ of objects in $G$ is called the 
\emph{unit space}, and we frequently identify a unit with the identity morphism at that unit. A groupoid is  \emph{principal} if there is at most one morphism between each pair of units. 

A \emph{topological groupoid} is a groupoid equipped with a topology on the set of morphisms such that the composition and inverse maps are continuous.   A topological groupoid $G$ is \emph{\'etale} if 
the map $r$ (equivalently, $s$) is a local homeomorphism.  
The unit space of an \'etale groupoid is open in $G$, and  the sets $s^{-1}(u)$ and $r^{-1}(u)$ are discrete for every $u\in \go$.

Suppose $G$ is a topological groupoid. Then the \emph{orbit} of $u\in  G^{(0)}$ is $[u]:=r(s^{-1}(u))$.  For $u,v\in \go$ we write $u\sim v$ if $[u]=[v]$, and then $\sim$ is an equivalence relation on $\go$.   We write $q:G^{(0)}\to \go/G:=\go/\!\!\sim$ for the quotient map onto the orbit space.  If $G$ is \'etale, then $r$ is open, and then the quotient map is also open because  $q^{-1}(q(U))=r(s^{-1}(U))$ for $U\subset \go$.

A topological groupoid $G$ is \emph{Cartan} if every unit $u \in \go$ has a neighbourhood $N$ in $\go$ which is \emph{wandering} in the sense that $s^{-1}(N)\cap r^{-1}(N)$ has compact closure.

Let $G$ be a locally compact Hausdorff  groupoid with left Haar system $\lambda=\{\lambda^u:u\in G^{(0)}\}$. We also need to use the corresponding right Haar system $\{\lambda_u\}$ defined by $\lambda_u(E)=\lambda^u(E^{-1})$. A \emph{$2$-cocycle} on $G$ is a function $\sigma:G^{(2)}\to\T$ such that $\sigma(\alpha,\beta)\sigma(\alpha\beta,\gamma)=\sigma(\beta,\gamma)\sigma(\alpha,\beta\gamma)$. As in \cite{BaH}, we assume that all our cocycles are continuous and normalised in the sense that $\sigma(r(\gamma),\gamma)=1=\sigma(\gamma,s(\gamma))$, and we write $Z^2(G,\T)$ for the set of such cocycles. For such $\sigma$, there are both full and reduced twisted groupoid $C^*$-algebras. Here we work primarily with the
reduced version, though in fact the full and reduced groupoid $C^*$-algebras coincide for the groupoids of interest to us (see Theorem~\ref{Cartan=Fell}). Let $C_c(G, \sigma)$ be $C_c(G)$ with involution   and convolution  given by $f^*(\alpha) = \overline{f(\alpha^{-1})}\sigma(\alpha,\alpha^{-1})$ and 
\begin{align*}
    (f*g)(\alpha) &= \int_G f(\alpha\gamma) g(\gamma^{-1})\sigma(\alpha\gamma,\gamma^{-1})\, d\lambda^{s(\alpha)}(\gamma);
\end{align*}
it is shown in \cite[Proposition~II.1.1]{Ren} that $C_c(G, \sigma)$ is a $*$-algebra.  The invariance of the Haar system gives
\begin{equation}\label{eq-ops2}
(f*g)(\alpha)= \int_G f(\beta) g(\beta^{-1}\alpha)\sigma(\beta,\beta^{-1}\alpha)\, d\lambda^{r(\alpha)}(\beta).
\end{equation}
For $u \in \go$, there is an \emph{induced representation} $\Ind_u^\sigma$  of $C_c(G,\sigma)$
on $L^2(s^{-1}(u),\lambda_u)$ such that, for $f\in C_c(G,\sigma)$ and $\xi\in L^2(s^{-1}(u),\lambda_u)$, 
\[(\Ind_u^\sigma(f)\xi)(\alpha)
= \int_G f(\beta) \xi(\beta^{-1}\alpha)\sigma(\beta,\beta^{-1}\alpha)\, d\lambda^{r(\alpha)}(\beta).\]
As in \cite[\S II.2]{Ren}, the \emph{reduced twisted groupoid $C^*$-algebra} $C^*_{\red}(G,\sigma)$ is the completion of $C_c(G,\sigma)$ with respect to the reduced norm
\[\|f\|_{\red} = \sup_{u \in\go} \|\Ind_u^\sigma(f)\|.\]
As usual, we write $C^*_{\red}(G)$ for $C^*_{\red}(G,1)$ and $\Ind_u$ for $\Ind_u^1$.

If $G$ is \'etale then $r^{-1}(r(\beta))$ is discrete, and \eqref{eq-ops2}, for example, reduces to  \[
f*g(\alpha)=\sum_{r(\alpha) =r(\beta)} f(\beta) g(\beta^{-1}\alpha)\sigma(\beta,\beta^{-1}\alpha)=\sum_{\alpha =\beta \gamma} f(\beta) g(\gamma)\sigma(\beta,\gamma).
\]

\section{Topological preliminaries}\label{sec-top}

By the standard definition, a topological space $X$ is \emph{locally compact} if every 
point of $X$ has a compact neighbourhood. When $X$ is Hausdorff, this is equivalent to asking that every point has a neighbourhood base of compact sets \cite[Theorem~29.2]{M}. For general, possibly non-Hausdorff spaces, we say that $X$ is \emph{locally locally-compact} if every point of 
$X$ has a neighbourhood basis of compact sets (Lemma~\ref{Lem:LLC_Equiv} below explains our choice of name). The spectrum of a $C^*$-algebra is always locally locally-compact (see Lemma~\ref{Lem:FellLLC_LH}), so this ``neighbourhood basis'' version of local compactness has attractions for operator algebraists. It also has the advantage, as Munkres points out in \cite[page~185]{M}, that it is more consistent with other uses of the word ``local'' in topology. It has been adopted without comment as \emph{the} definition of local compactness in \cite[page~149, problem~29]{B}\footnote{Modulo Bourbaki's use of the word ``quasi-compact'' to mean what we call compact. Dixmier follows Bourbaki, as one should be aware when reading \cite{D}.} and in \cite[Definition~1.16]{W}. However, many topology books, such as \cite{Kelley} and \cite{M}, and many real-analysis texts, such as \cite{Rud} and \cite{P}, use the standard ``every point has a compact neighbourhood'' definition, and we will go along with them.

The proof of the following lemma is straightforward.

\begin{lemma}\label{Lem:LLC_Equiv}
A topological space $X$ is locally locally-compact if and only if every open subset of $X$ is locally compact.
\end{lemma}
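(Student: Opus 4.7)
The plan is to unwind both definitions in terms of the usual notion of ``neighbourhood base'' and exploit the elementary fact that if $U$ is an open subset of $X$ and $x\in U$, then a subset $K\subseteq U$ is a neighbourhood of $x$ in the subspace topology on $U$ if and only if it is a neighbourhood of $x$ in $X$ (this is because any open-in-$U$ set containing $x$ is already open in $X$, and conversely the intersection with $U$ of any open-in-$X$ set inside $U$ is itself). Once this is in hand, both directions become one-line verifications.

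For the forward direction, suppose $X$ is locally locally-compact and let $U\subseteq X$ be open. Given $x\in U$, $U$ itself is a neighbourhood of $x$ in $X$, so by hypothesis there is a compact $K$ with $x\in \inter(K)$ and $K\subseteq U$. The preceding observation shows $K$ is a neighbourhood of $x$ in $U$, so $U$ is locally compact (in the standard sense).

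For the converse, suppose every open subset of $X$ is locally compact, let $x\in X$, and let $N$ be any neighbourhood of $x$ in $X$; pick an open $U$ with $x\in U\subseteq N$. By hypothesis, $U$ is locally compact, so there is a compact $K\subseteq U$ which is a neighbourhood of $x$ in $U$. Again by the observation, $K$ is a neighbourhood of $x$ in $X$, and $K\subseteq N$. Thus the compact neighbourhoods of $x$ form a neighbourhood basis at $x$, and $X$ is locally locally-compact.

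There is no substantive obstacle: the only thing that must not be glossed over is the equivalence of ``neighbourhood of $x$ in $U$'' and ``neighbourhood of $x$ in $X$ contained in $U$'' when $U$ is open, which is precisely why the hypothesis that $U$ is \emph{open} (rather than an arbitrary subspace) is needed. I would state this equivalence explicitly at the start of the proof and then simply apply it twice.
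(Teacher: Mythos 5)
Your proof is correct, and it is precisely the ``straightforward'' argument the paper has in mind (the paper omits the proof entirely). The one point worth making explicit --- that for $U$ open in $X$, a set $K\subseteq U$ is a neighbourhood of $x$ in the subspace $U$ if and only if it is one in $X$, together with the intrinsic nature of compactness --- is exactly the point you do make explicit, so nothing is missing.
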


A topological space $X$ is \emph{locally Hausdorff} if every point of $X$ has a 
Hausdorff neighbourhood. It is straightforward to verify that a locally Hausdorff space is T${}_1$. The following result explains our interest in locally locally-compact and locally Hausdorff spaces. 

%\begin{lemma} A locally Hausdorff space $X$ is \emph{T${}_1$}.
%\end{lemma}
%\begin{proof} Suppose $X$ is locally Hausdorff. Fix $x\in X$.  We will show $X\setminus\{x\}$ is open in $X$.  Let $y\in X\setminus\{x\}$.  Let $U_y$ be an open Hausdorff neighbourhood of $y$. If $x\notin U_y$ then $y\in U_y\subset X\setminus\{x\}$, and $y$ is an interior point of $X\setminus\{x\}$.  Now suppose that  $x\in U_y$.  Since $U_y$ is Hausdorff,  $\{x\}$ is closed in $U_y$. Thus $U_y\setminus \{x\}$ is open in $U_y$, and since $U_y$ is open in $X$ it follows that $U_y\setminus \{x\}$ is open in $X$. Now $y\in U_y\setminus \{x\}\subset X\setminus\{x\}$ and $y$ is again an interior point of $X\setminus\{x\}$.  Thus $X\setminus\{x\}$ is open in $X$, and hence $\{x\}$ is closed in $X$.
%\end{proof}

\begin{lemma}\label{Lem:FellLLC_LH}
If $A$ is a Fell algebra, then the spectrum of $A$ is locally locally-compact and locally Hausdorff.
\end{lemma}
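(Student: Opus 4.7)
The plan is to produce, for each $\pi_0\in\hat A$, an open neighbourhood $V\ni\pi_0$ which is Hausdorff and locally compact in the subspace topology. This suffices: $V$ itself witnesses local Hausdorffness at $\pi_0$, and for any open $W\subseteq\hat A$ containing $\pi_0$, the intersection $V\cap W$ is an open subset of the Hausdorff locally compact space $V$, and is therefore locally compact; this gives a compact neighbourhood of $\pi_0$ inside $W$ and, via Lemma~\ref{Lem:LLC_Equiv}, establishes that $\hat A$ is locally locally-compact.

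To construct $V$, I would use the Fell condition at $\pi_0$ to choose $a\in A^+$ and an open neighbourhood $V_0\ni\pi_0$ such that $\rho(a)$ is a rank-one projection for every $\rho\in V_0$. For each such $\rho$, pick a unit vector $v_\rho$ in the range of $\rho(a)$ and consider the pure vector state $\omega_\rho$ defined by $\omega_\rho(x)=\langle\rho(x)v_\rho,v_\rho\rangle$; it is independent of the choice of $v_\rho$, and a direct computation shows $\rho(axa)=\omega_\rho(x)\rho(a)$ and hence $\omega_\rho(x)=\tr(\rho(axa))$. I would then argue that the assignment $\Phi\colon V_0\to P(A)$, $\rho\mapsto\omega_\rho$, is a homeomorphism onto an open subset of the pure state space. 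Taking $V=V_0$ then works, because $P(A)$ is locally compact Hausdorff (it is open in the weak* compact Hausdorff quasi-state space $P(A)\cup\{0\}$), so the open subset $\Phi(V_0)$ inherits these properties. Injectivity of $\Phi$ is immediate since Fell algebras are CCR and therefore each irreducible representation is determined by its kernel, which in turn coincides with the left kernel of the GNS representation of $\omega_\rho$; continuity of $\Phi^{-1}$ is the standard continuity of the GNS map $P(A)\to\hat A$.

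The main obstacle is the continuity of $\Phi$ itself, namely that $\rho\mapsto\omega_\rho(x)$ is continuous on $V_0$ for each $x\in A$. Decomposing $x$ as a linear combination of self-adjoint elements and each self-adjoint element as a difference of positive elements, it suffices to handle $y\in A^+$. There, because $\rho(a)$ is a projection of norm one, $\rho(aya)=\omega_\rho(y)\rho(a)$ is a positive scalar multiple of a rank-one projection, so $\omega_\rho(y)=\|\rho(aya)\|$, which is lower semicontinuous on $\hat A$. Applying the same identity to the element $\|y\|a^2-aya\in A$ gives $\|y\|-\omega_\rho(y)=\|\rho(\|y\|a^2-aya)\|$ on $V_0$, and this lower-semicontinuous expression converts into upper semicontinuity of $\omega_\rho(y)$; together these yield continuity, completing the plan.
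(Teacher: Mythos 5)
Your continuity argument for $\rho\mapsto\omega_\rho(x)$ --- the identity $\rho(aya)=\omega_\rho(y)\rho(a)$, lower semicontinuity of $\rho\mapsto\|\rho(aya)\|$, and the complementary element $\|y\|a^2-aya$ to convert this into upper semicontinuity --- is correct, and together with injectivity of $\Phi$ it already yields local Hausdorffness: a continuous injection of $V_0$ into the weak${}^*$ Hausdorff state space forces $V_0$ to be Hausdorff, with no need for $\Phi^{-1}$ to be continuous or for $\Phi(V_0)$ to be open. This is essentially the argument behind \cite[Corollary~3.4]{AS}, which is what the paper cites for this half of the lemma (its proof consists of the two citations \cite[Corollary~3.3.8]{D} and \cite[Corollary~3.4]{AS}).

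The gap is in the local compactness half. The two assertions you lean on --- that $\Phi(V_0)$ is open in $P(A)$, and that $P(A)$ is locally compact Hausdorff because $P(A)\cup\{0\}$ is weak${}^*$ compact --- both fail in general, already for $A=\calK(\ell^2(\N))$. There $\hat A$ is a single point, so $\Phi(V_0)$ is a single pure state, while $P(A)$ is the full projective space of $\ell^2(\N)$: a point is not open in it, and the space is not locally compact (on vector states of $\calK$ the weak${}^*$ topology coincides with the norm topology of infinite-dimensional projective Hilbert space). Moreover $P(A)\cup\{0\}$ is not weak${}^*$ closed in the set of positive functionals of norm at most one: with $v_n=(\xi_1+\xi_n)/\sqrt2$ the vector states $\omega_{v_n}$ converge weak${}^*$ to $\tfrac12\omega_{\xi_1}$, which is neither pure nor zero. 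The repair is cheap: the ``locally locally-compact'' conclusion needs no Fell hypothesis at all, since by \cite[Corollary~3.3.8]{D} every point of the spectrum of \emph{any} $C^*$-algebra has a neighbourhood basis of compact sets --- this is exactly what the paper invokes. (Alternatively, once $V_0$ is known to be Hausdorff, take a compact neighbourhood $K\subset V_0$ of $\pi_0$; then $\inter K$ is an open, Hausdorff, locally compact neighbourhood of $\pi_0$, and the reduction in your first paragraph applies.) With that substitution your proof goes through.
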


\begin{proof}
Corollary~3.3.8 of \cite{D} implies that $\hat A$ is locally locally-compact, and Corollary 3.4 of \cite{AS} that $\hat{A}$ is locally Hausdorff.
\end{proof}

Lemma~\ref{Lem:FellLLC_LH} has a converse: every locally locally-compact and locally Hausdorff space is the spectrum of some Fell algebra \cite[Theorem~6.6(2)]{aHKS}. We will later give a shorter proof of this result (see Corollary~\ref{new}).  

We warn that a locally locally-compact and locally Hausdorff space may not be paracompact, that compact subsets may not be closed, and that the intersection of two compact sets may not be compact (for example, in the spectrum of the Fell $C^*$-algebra described in \S\ref{sec-example}). So we have found our usual, Hausdorff-based intuition to be distressingly misleading, and we have tried to exercise extreme caution in matters topological.

Locally locally-compact and locally Hausdorff spaces have the following purely topological characterisation.

\begin{prop}\label{charllc}
\begin{enumerate}
\item\label{charllc-a}
 Let $\psi:Y\to X$ be a local homeomorphism of a   
locally compact Hausdorff space $Y$ onto  a topological space  $X$. Then $X$ is locally locally-compact and  locally Hausdorff.  If $Y$ is second-countable, so is $X$. 

\item\label{charllc-b} Let $X$ be a locally locally-compact and  locally Hausdorff space. Then there are a locally compact Hausdorff space $Y$ and a local homeomorphism $\psi$ of $Y$ onto $X$. If $X$ is second-countable, then we can take $Y$ to be second-countable.
\end{enumerate}
\end{prop}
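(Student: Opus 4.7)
\smallskip

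\noindent\textbf{Proof plan.}

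For part (\ref{charllc-a}), fix $x \in X$ and choose $y \in \psi^{-1}(x)$. Since $\psi$ is a local homeomorphism, I can find an open neighbourhood $W$ of $y$ in $Y$ such that $V := \psi(W)$ is open in $X$ and $\psi|_W : W \to V$ is a homeomorphism. Because $W$ is an open subset of the locally compact Hausdorff space $Y$, it is itself locally compact and Hausdorff, so $y$ has a neighbourhood basis in $W$ consisting of compact sets. Transporting this basis across the homeomorphism $\psi|_W$, and observing that neighbourhoods of $x$ in the open set $V$ are also neighbourhoods in $X$, I obtain a neighbourhood basis of compact sets at $x$ in $X$; this proves that $X$ is locally locally-compact. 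Since $V$ is homeomorphic to $W$, which sits inside the Hausdorff space $Y$, $V$ is a Hausdorff neighbourhood of $x$, so $X$ is locally Hausdorff. For the second-countability clause I use that $\psi$, being a local homeomorphism, is an open surjection; then if $\mathcal{B}$ is a countable basis for $Y$, the family $\{\psi(B) : B \in \mathcal{B}\}$ is a countable basis for $X$, as one checks directly by pulling open subsets of $X$ back through $\psi$.

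For part (\ref{charllc-b}), the strategy is to build $Y$ as a topological disjoint union of Hausdorff, locally compact, open subsets of $X$ that cover $X$. Using local Hausdorffness I choose, for each $x \in X$, an open Hausdorff neighbourhood $U_x$ of $x$. By Lemma~\ref{Lem:LLC_Equiv}, each such $U_x$ is locally compact (being open in $X$), and it is Hausdorff in its own right. Let $\{U_\alpha\}_{\alpha \in I}$ denote this cover, set $Y := \bigsqcup_{\alpha \in I} U_\alpha$ with the disjoint union topology, and let $\psi : Y \to X$ send the copy of $p \in U_\alpha$ inside $Y$ to $p \in X$. Then $Y$ is locally compact and Hausdorff as a disjoint union of such spaces, $\psi$ is surjective because $\{U_\alpha\}$ covers $X$, and $\psi$ is a local homeomorphism because on each summand it is the inclusion $U_\alpha \hookrightarrow X$ of an open subset.

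To handle the second-countability refinement, suppose $X$ is second-countable with countable basis $\{V_n\}_{n \in \N}$. For each $x \in X$ choose an open Hausdorff neighbourhood $U_x$ as above, and then choose $n(x) \in \N$ with $x \in V_{n(x)} \subset U_x$. The collection $\{V_{n(x)} : x \in X\}$ is a countable cover of $X$, and each element is Hausdorff (as an open subset of some $U_x$) and locally compact (as an open subset of $X$, using Lemma~\ref{Lem:LLC_Equiv}). Replacing $\{U_\alpha\}$ by this countable family in the construction above yields a $Y$ which is a countable disjoint union of second-countable spaces (each summand is open in the second-countable space $X$), hence second-countable.

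The only place that requires genuine care is the construction in (\ref{charllc-b}): one must exploit \emph{both} hypotheses simultaneously, drawing Hausdorffness of each summand from local Hausdorffness and local compactness of each summand from Lemma~\ref{Lem:LLC_Equiv}, and then ensure in the second-countable case that the covering family can be chosen countable while keeping both properties. The rest of the argument is a straightforward unwinding of definitions.
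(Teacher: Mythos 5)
Your proposal is correct and follows essentially the same route as the paper: for (\ref{charllc-a}) you push a compact neighbourhood basis forward through a local homeomorphic chart and use openness of $\psi$ to carry a countable basis onto $X$, and for (\ref{charllc-b}) you build $Y$ as the topological disjoint union of an open Hausdorff cover, invoking Lemma~\ref{Lem:LLC_Equiv} for local compactness of the summands. The only cosmetic difference is that you make explicit, via basic open sets $V_{n(x)}\subset U_x$, how to extract a countable Hausdorff cover in the second-countable case, a step the paper leaves implicit.
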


\begin{proof}
For \eqref{charllc-a} suppose that $\psi:Y\to X$ is a local homeomorphism. Fix $x\in X$ and an open neighbourhood $W$ of $x$ in $X$.  Let $y\in\psi^{-1}(x)$. Since $\psi^{-1}(W)$ is an open neighbourhood of $y$ and $\psi$ is a local 
homeomorphism, there is a neighbourhood $U$ of $y$ contained in $\psi^{-1}(W)$ such that $\psi|_U$ is a homeomorphism. Since $Y$ is locally compact and  Hausdorff, it is locally locally-compact by \cite[Theorem~29.2]{M}, and there is  a compact  neighbourhood $K$ of $y$ contained in $U$.  
Then  $\psi(K)$ is compact and Hausdorff, and because $\psi$ is open, it is a neighbourhood of $x$ contained in $W$.  This proves both that $X$ is locally locally-compact and that $X$ is locally Hausdorff.

Since $\psi$ is continuous and open, the image of a basis for the topology on $Y$ is a basis for the topology on $X$. Thus $X$ is second-countable if $Y$ is.

For \eqref{charllc-b}, suppose that $X$ is locally locally-compact and locally Hausdorff. Choose an open cover $\mathcal{U}$ of $X$  by Hausdorff sets.  Let $Y:= \bigsqcup_{U\in \mathcal{U}} U$, and topologise $Y$ by giving each $U$ the subspace topology from $X$ and making each $U$ open and closed in $Y$.
Then Lemma~\ref{Lem:LLC_Equiv} implies that $Y$ is locally compact and Hausdorff, and the  inclusion maps $U\to X$ combine to give a surjective local homeomorphism $\psi:Y \to X$. If $X$ is second-countable, then we can take the cover to be countable, and $Y$ is also second-countable.
\end{proof}

%%%%%%%%%%%%%%%%%%%%%%%%%%%%%%%%%%%%%%%%%%%%%%%%%%%%%%%%%%%%%%%%%%%%%%%%%%%%%%%

\section{The groupoid associated to a local homeomorphism}\label{sec:C*Rpsi}

Let $\psi$ be  a surjective map from a topological space $Y$ to a set $X$, and take
\[
R(\psi)=Y\times_\psi Y:= \{(y,z) \in Y \times Y : \psi(y) = \psi(z)\}.
\]
With the subspace topology and the operations $r(y,z)=y$, $s(y,z)=z$ and $(x,y)(y,z)=(x,z)$, $R(\psi)$ is a principal topological groupoid with unit space $Y$.   

We want to examine the effect of properties of $Y$ and $\psi$ on the structure of $R(\psi)$. We begin by looking at the orbit space.

\begin{lemma}\label{X=orbitsp} Let $\psi$ be  a surjective map from a topological space $Y$ to a set $X$, and define $h:X\to Y/R(\psi)$ by $h(x)=\psi^{-1}(x)$. 
\begin{enumerate}
\item\label{itema} The function $h$ is a bijection, and $h\circ \psi$ is the quotient map $q:Y\to Y/R(\psi)$.
\item\label{itemb} If $X$ is a topological space and $\psi$ is continuous, then $h$ is open.
\item\label{itemc} Suppose that $\psi:Y\to X$ is a quotient map, in the sense that $U$ is open in $X$ if and only if $\psi^{-1}(U)$ is open. Then $h$ is a homeomorphism of $X$ onto $Y/R(\psi)$.
\end{enumerate}
\end{lemma}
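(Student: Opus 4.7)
The plan is to first unpack what the $R(\psi)$-orbits actually are: for $y\in Y$, the orbit $[y]=r(s^{-1}(y))$ is $\{z\in Y:(y,z)\in R(\psi)\}=\{z:\psi(z)=\psi(y)\}=\psi^{-1}(\psi(y))$. Since $\psi$ is surjective, the family $\{\psi^{-1}(x):x\in X\}$ is exactly the partition of $Y$ into orbits, and distinct fibres are disjoint, so $h(x)=\psi^{-1}(x)$ is a bijection of $X$ onto $Y/R(\psi)$. The factorisation $h\circ\psi=q$ is then immediate from the definitions: $h(\psi(y))=\psi^{-1}(\psi(y))=[y]=q(y)$. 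That settles \eqref{itema}.

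For \eqref{itemb}, I would take an open set $V\subseteq X$ and check openness of $h(V)$ directly from the definition of the quotient topology on $Y/R(\psi)$, namely that $h(V)$ is open iff $q^{-1}(h(V))$ is open in $Y$. Using $q=h\circ\psi$ from \eqref{itema} together with the injectivity of $h$ gives
\[
q^{-1}(h(V))=\psi^{-1}\bigl(h^{-1}(h(V))\bigr)=\psi^{-1}(V),
\]
which is open in $Y$ because $\psi$ is continuous.

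For \eqref{itemc}, in view of \eqref{itemb} and the fact that $h$ is a bijection, it only remains to prove continuity. For any open $W\subseteq Y/R(\psi)$, the set $q^{-1}(W)$ is open in $Y$ by definition of the quotient topology, and $q=h\circ\psi$ gives $q^{-1}(W)=\psi^{-1}(h^{-1}(W))$. Since $\psi$ is a quotient map, $\psi^{-1}(h^{-1}(W))$ being open forces $h^{-1}(W)$ to be open in $X$, so $h$ is continuous and hence a homeomorphism.

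The argument is essentially pure bookkeeping about fibres, factorisations, and the defining properties of quotient topologies; I do not anticipate any genuine obstacle, the main thing to watch being to apply $h^{-1}$ on the correct side in each computation rather than confusing the directions of $h$, $q$, and $\psi$.
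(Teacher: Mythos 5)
Your proposal is correct and follows essentially the same route as the paper: the same fibre/orbit identification for \eqref{itema}, the same computation $q^{-1}(h(V))=\psi^{-1}(h^{-1}(h(V)))=\psi^{-1}(V)$ for \eqref{itemb}, and the same use of the quotient-map property applied to $q^{-1}(W)=\psi^{-1}(h^{-1}(W))$ for \eqref{itemc}. No gaps.
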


\begin{proof} \eqref{itema} If $h(x)=h(x')$, then the surjectivity of $\psi$ implies that there exists at least one $z\in \psi^{-1}(x)=\psi^{-1}(x')$, and then $x=\psi(z)=x'$. So $h$ is one-to-one. Surjectivity is easy: every orbit $\psi^{-1}(\psi(y))=h(\psi(y))$. The same formula $h(\psi(y))=\psi^{-1}(\psi(y))$ shows that $h\circ\psi(y)$ is the orbit $q(y)$ of $y$.

For \eqref{itemb}, take $U$ open in $X$. Then $q^{-1}(h(U))=\psi^{-1}(h^{-1}(h(U)))=\psi^{-1}(U)$ is open because $\psi$ is continuous, and then $h(U)$ is open by definition of the quotient topology. For \eqref{itemc}, take $V$ open in $Y/R(\psi)$. Then $\psi^{-1}(h^{-1}(V))=q^{-1}(V)$ is open in $Y$, and $h^{-1}(V)$ is open because $\psi$ is a quotient map.
\end{proof} 

\begin{lemma}\label{lem:qtnt map}
Suppose that $\psi:Y \to X$ is a quotient map.   
Then $R(\psi)$ is \'etale if and only if $\psi$ is a local homeomorphism.
\end{lemma}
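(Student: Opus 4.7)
The plan is to prove both directions by a direct topological argument, using the fact (stated earlier in the background section) that the unit space of an étale groupoid is open and that the orbit map $q$ is open when the groupoid is étale.

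For the ``if'' direction, suppose $\psi$ is a local homeomorphism and fix $(y,z) \in R(\psi)$. First I would produce open neighbourhoods $U \ni y$ and $V \ni z$ so that $\psi|_U$ and $\psi|_V$ are homeomorphisms onto a common open set $W \subseteq X$: pick any homeomorphism neighbourhoods $U_0, V_0$, set $W = \psi(U_0) \cap \psi(V_0)$, and replace $U_0, V_0$ by $U_0 \cap \psi^{-1}(W)$ and $V_0 \cap \psi^{-1}(W)$. Then I would consider the open neighbourhood $N := (U \times V) \cap R(\psi)$ of $(y,z)$ in $R(\psi)$. The key observation is that for each $y' \in U$ there is a \emph{unique} $z' \in V$ with $\psi(z') = \psi(y')$, because $\psi|_V$ is injective; this makes $r|_N : N \to U$ a bijection onto the open set $U$ with continuous inverse $y' \mapsto (y', (\psi|_V)^{-1}(\psi(y')))$. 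Hence $r$ is a local homeomorphism at $(y,z)$, so $R(\psi)$ is étale.

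For the ``only if'' direction, assume $R(\psi)$ is étale. Fix $y \in Y$. Since the unit space of an étale groupoid is open, $R(\psi)^{(0)} = \{(y',y') : y' \in Y\}$ is open in $R(\psi)$, which carries the subspace topology from $Y \times Y$. Therefore $(y,y)$ has a basic open neighbourhood of the form $(U_1 \times U_2) \cap R(\psi)$ contained in $R(\psi)^{(0)}$; setting $U = U_1 \cap U_2$, the condition $(U \times U) \cap R(\psi) \subseteq R(\psi)^{(0)}$ says precisely that $\psi|_U$ is \emph{injective}. It then remains to show $\psi|_U$ is open. For this I would use Lemma~\ref{X=orbitsp}\eqref{itemc}: because $\psi$ is a quotient map, $h : X \to Y/R(\psi)$ is a homeomorphism with $h \circ \psi = q$, so $\psi = h^{-1} \circ q$ is open once $q$ is open, and $q$ is open because $R(\psi)$ is étale (as recorded in the background). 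Hence $\psi|_U$ is a continuous, open, injective map onto the open set $\psi(U) \subseteq X$, i.e.\ a homeomorphism, so $\psi$ is a local homeomorphism.

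The main obstacle is the ``if'' direction: I have to be a little careful when constructing the bisection $N$ to ensure that the homeomorphism neighbourhoods on the two sides of $(y,z)$ have a common image, so that the inverse of $r|_N$ is well-defined and continuous on all of $U$. Once the two neighbourhoods are matched up via a common open $W$, the rest is routine. The ``only if'' direction is short once one realises that openness of the unit space in $R(\psi)$ alone forces local injectivity of $\psi$.
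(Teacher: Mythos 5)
Your proof is correct and follows essentially the same route as the paper's: the forward direction constructs the same bisection $(U\times V)\cap R(\psi)$ with matched images and exhibits the same continuous inverse for $r$, and the converse extracts local injectivity of $\psi$ and then uses openness of the orbit map $q$ together with the homeomorphism $h$ of Lemma~\ref{X=orbitsp}. The only cosmetic difference is that you obtain local injectivity from openness of the unit space $R(\psi)^{(0)}$, whereas the paper obtains it from injectivity of $r$ on a neighbourhood of $(y,y)$; both are standard consequences of \'etaleness already recorded in the background section.
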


\begin{proof}
Suppose that $\psi$ is a local homeomorphism and $(y,z)\in R(\psi)$. There are open neighbourhoods $U$ of $y$ and $V$ of $z$ such that $\psi|_U$ and $\psi|_V$ are homeomorphisms onto open neighbourhoods of $\psi(y)=\psi(z)$. By shrinking if necessary, we may suppose $\psi(U)=\psi(V)$. Now $W:=(U\times V)\cap R(\psi)$ is an open neighbourhood of $(y,z)$, and the function $w\mapsto (w,(\psi|_V)^{-1}\circ\psi|_U(w))$ is a continuous inverse for $r|_W$. Thus $r$ is a local homeomorphism, and $R(\psi)$ is \'etale.

Conversely, suppose that $R(\psi)$ is \'etale and $y\in Y$. Then $(y,y)\in R(\psi)$. Since $r$ is a local homeomorphism, there is a neighbourhood $W$ of $(y,y)$ in $R(\psi)$ such that $r|_W$ is a homeomorphism. By shrinking, we can assume that $W=(U\times U)\cap R(\psi)$ for some open neighbourhood $U$ of $y$ in $Y$. We claim that $\psi$ is one-to-one on $U$. Suppose $y_1,y_2\in U$, and $\psi(y_1)=\psi(y_2)$. Then $(y_1,y_1)$ and $(y_1,y_2)$ are both in $W$.
Now $r(y_1,y_1)=y_1=r(y_1,y_2)$, the injectivity of $r|_W$ implies that $(y_1,y_1)=(y_1,y_2)$, and $y_1=y_2$. Thus $\psi|_U$ is one-to-one, as claimed. Since the orbit map $q$ in an \'etale groupoid is open, and $h$ is a homeomorphism with $h\circ\psi=q$, it follows that $\psi(U)=h^{-1}(q(U))$ is open. Thus $\psi$ is a local homeomorphism.
\end{proof}

\begin{prop}
\label{prop:r(psi)_props}
Suppose that $Y$ and $X$ are topological spaces with $Y$ locally compact Hausdorff, and $\psi:Y \to X$ is a surjective local homeomorphism.
Then $R(\psi)$ is locally compact, Hausdorff,  principal, \'etale and Cartan.
\end{prop}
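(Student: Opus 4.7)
The plan is to verify the five properties in turn, leveraging Lemma~\ref{lem:qtnt map} and the hypothesis that $Y$ is locally compact Hausdorff. Principality is immediate from the construction, since $(y,z)$ is the unique arrow from $z$ to $y$, and Hausdorffness is inherited by $R(\psi)$ from the product topology on $Y\times Y$.

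For étaleness, I would first observe that a surjective local homeomorphism is an open map and hence a quotient map, which puts us in the setting of Lemma~\ref{lem:qtnt map}; that lemma then gives that $R(\psi)$ is \'etale.

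With étaleness in hand, local compactness comes almost for free. The range map $r:R(\psi)\to Y$ is a local homeomorphism, and every open subset of the locally compact Hausdorff space $Y$ is itself locally compact (by Lemma~\ref{Lem:LLC_Equiv}, or by the classical ``neighbourhood basis'' characterisation). Thus every point of $R(\psi)$ has an open neighbourhood $W$ that $r$ sends homeomorphically onto an open subset of $Y$, and one can pull back a compact neighbourhood through $r|_W^{-1}$. It is worth flagging the subtlety that, because $X$ is not assumed Hausdorff, $R(\psi)$ need not be closed in $Y\times Y$, so one cannot just invoke ``closed subspace of a locally compact Hausdorff space''; the étale structure is what saves us.

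For the Cartan condition, I would fix $y\in Y$, pick an open neighbourhood $U$ of $y$ on which $\psi|_U$ is a homeomorphism onto $\psi(U)$, and then choose a compact neighbourhood $N$ of $y$ contained in $U$ (possible since $Y$ is locally compact Hausdorff). If $(y',z')\in s^{-1}(N)\cap r^{-1}(N)$, then $y',z'\in N\subset U$ and $\psi(y')=\psi(z')$; injectivity of $\psi|_U$ forces $y'=z'$. So $s^{-1}(N)\cap r^{-1}(N)$ is the image of the compact set $N$ under the continuous diagonal map $y'\mapsto(y',y')$, hence compact; and being compact in the Hausdorff space $R(\psi)$ it is already closed, so it coincides with its closure. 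The only real obstacle, throughout, is being disciplined about the fact that $X$ is not Hausdorff, which forces every compactness or separation argument to be routed through $Y$ or through neighbourhoods on which $\psi$ is a homeomorphism.
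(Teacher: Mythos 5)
Your proof is correct. The arguments for principality, Hausdorffness, \'etaleness (via the observation that a surjective local homeomorphism is an open quotient map, feeding into Lemma~\ref{lem:qtnt map}) and local compactness (pulling a compact neighbourhood back through a local section of $r$) coincide with the paper's. Where you genuinely diverge is the Cartan condition. The paper deduces it from a separate general lemma (Lemma~\ref{lem:lc_implies_cartan}): for any surjection $\psi$ from a \emph{regular} space $Y$, if $R(\psi)$ is locally compact then it is Cartan; the proof there shrinks a compact neighbourhood of $(y,y)$ in $R(\psi)$ using regularity of $Y$ to manufacture a closed wandering neighbourhood. You instead exploit the local injectivity of $\psi$ directly: choosing a compact neighbourhood $N$ of $y$ inside a set $U$ on which $\psi$ is injective, you observe that $s^{-1}(N)\cap r^{-1}(N)$ collapses to the diagonal copy of $N$, which is compact and (by Hausdorffness of $R(\psi)$) closed. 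Your argument is shorter and more transparent, and correctly sidesteps the trap that $R(\psi)$ need not be closed in $Y\times Y$. What the paper's detour buys is generality: Lemma~\ref{lem:lc_implies_cartan} applies when $\psi$ is merely a quotient map with $R(\psi)$ locally compact, and that extra strength is exactly what is needed later in the proof of Proposition~\ref{prop:cartan_iff_pi_homeo}, where no local injectivity is available. For the proposition as stated, your direct route is entirely adequate.
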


\begin{proof}
The groupoid $R(\psi)$ is principal because it is an equivalence relation, and is Hausdorff because $Y$ is Hausdorff.
Since $\psi$ is a local homeomorphism, it is a quotient map, and hence Lemma~\ref{lem:qtnt map} implies that $R(\psi)$ is  \'etale. Let $(y,z)\in R(\psi)$. Then there is an open neighbourhood $U$ of $(y,z)$ such that $r|_U$ is a homeomorphism onto an open neighbourhood of $y$. Since locally compact Hausdorff spaces are locally locally-compact, $r(U)$ contains a compact neighbourhood $K$ of $y$. Now $(r|_U)^{-1}(K)$ is a compact neighbourhood of $(y,z)$ in $R(\psi)$, and we have shown that $R(\psi)$ is locally compact.

Since locally compact Hausdorff spaces are regular \cite[1.7.9]{P}, the following lemma tells us that $R(\psi)$ is Cartan, and hence completes the proof of Proposition~\ref{prop:r(psi)_props}. The extra generality in the lemma will be useful in the proof of Proposition~\ref{prop:cartan_iff_pi_homeo}.
\end{proof}
    
\begin{lemma}\label{lem:lc_implies_cartan}
Suppose that $Y$ is a regular topological space and $\psi:Y\to X$ is a surjection. If $R(\psi)$ is locally compact, then $R(\psi)$ is Cartan. 
 \end{lemma}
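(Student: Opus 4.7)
The Cartan condition asks that every unit of $R(\psi)$ have a wandering neighbourhood, and in $R(\psi)$ the set $s^{-1}(N)\cap r^{-1}(N)$ equals $(N\times N)\cap R(\psi)$. So my task is: given $y_0\in Y$, produce an open $N\ni y_0$ in $Y$ such that $(N\times N)\cap R(\psi)$ has compact closure in $R(\psi)$.

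The plan is to combine local compactness of $R(\psi)$ at the unit $(y_0,y_0)$ with regularity of $Y$ to shrink a compact neighbourhood down by its closure. In detail, I would first use local compactness of $R(\psi)$ to fix a compact neighbourhood $K$ of $(y_0,y_0)$ and an open set $W$ in $R(\psi)$ with $(y_0,y_0)\in W\subseteq K$. Since $R(\psi)$ carries the subspace topology from $Y\times Y$, there exist open $U_1,U_2\subseteq Y$ with $y_0\in U_i$ and
\[
(U_1\times U_2)\cap R(\psi)\subseteq W.
\]
Set $U=U_1\cap U_2$. Now apply regularity of $Y$ to choose an open $N$ with $y_0\in N\subseteq\overline{N}\subseteq U$. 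Then, using only that closure of a product is contained in the product of closures,
\[
\overline{(N\times N)\cap R(\psi)}^{\,R(\psi)}\subseteq(\overline{N}\times\overline{N})\cap R(\psi)\subseteq(U\times U)\cap R(\psi)\subseteq W\subseteq K.
\]
Because this closure is a closed subset of the compact set $K$ (closedness and compactness both being statements inside $R(\psi)$ with its own topology), it is itself compact. Hence $N$ is wandering, and $R(\psi)$ is Cartan.

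I do not anticipate a real obstacle here; the only point that deserves care is the bookkeeping between the topology of $R(\psi)$ and the product topology on $Y\times Y$, in particular the observation that the closure of $(N\times N)\cap R(\psi)$ taken inside $R(\psi)$ equals the closure taken inside $Y\times Y$ intersected with $R(\psi)$, so that the containment $\overline{N}\times\overline{N}\subseteq U\times U$ really does pull the closure inside $K$. Everything else is a standard regularity-shrinking argument.
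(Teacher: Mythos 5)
Your argument is correct and is essentially the paper's own proof: both take a compact neighbourhood $K$ of the unit in $R(\psi)$, pull its interior back to a basic open box $U_1\times U_2$ in $Y\times Y$, shrink by regularity to a closed neighbourhood whose box intersected with $R(\psi)$ lands inside $K$, and conclude by closedness in a compact set. The only cosmetic difference is that the paper works with the closed set $\overline{V}_1\cap\overline{V}_2$ directly rather than tracking the closure of $(N\times N)\cap R(\psi)$, which sidesteps the (correctly handled) subspace-closure bookkeeping.
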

 
\begin{proof} 
Let $y\in Y$. We must find a wandering neighbourhood $W$ of $y$ in $Y$, that is, a neighbourhood $W$ such that $(W\times W)\cap R(\psi)$ has compact closure in $R(\psi)$.  
Since $R(\psi)$ is locally compact, there exists a compact neighbourhood $K$ of $(y,y)$ in $R(\psi)$. Since $K$ is a neighbourhood, the interior $\inter K$ is an open set containing $(y,y)$, and there exists an open
set $O \subset Y \times Y$ such that  $\inter K = O \cap R(\psi)$.

Choose open neighbourhoods  $U_1$, $U_2$ of $y$ in $Y$ such that $U_1 \times U_2 \subset O$.  Since $Y$ is 
regular, there are open neighbourhoods $V_i$ of $y$ such that 
$\overline{V}_i \subset U_i$ for  $i=1,2$.
Let $C := \overline{V}_1 \cap \overline{V}_2$.  Then $C$ is a closed neighbourhood of $y$ in $Y$, and
\[
(C \times C) \cap R(\psi) \subset (U_1 \times U_2) \cap R(\psi) \subset O \cap R(\psi) \subset K.\]
Since $(C \times C) \cap R(\psi)$ is closed in $R(\psi)$ and $K$ is compact, 
$(C \times C) \cap R(\psi)$ is compact, and $C$ is the required neighbourhood of $y$.
\end{proof}

Proposition~\ref{prop:r(psi)_props} has an intriguing converse. Suppose that $G$ is a locally compact, Hausdorff and principal groupoid. We will see that if $G$ is Cartan, then $G$ has the form $R(q)$, where $q:\go\to \go/G$ is the quotient map. The key idea is that, because $G$ is principal, the map $r\times s:\gamma\mapsto (r(\gamma),s(\gamma))$ is a groupoid isomorphism of $G$ onto $R(q)$. The map $r\times s$ is also continuous for the product topology on $R(q)$, but it is not necessarily open, and hence is not necessarily an isomorphism of topological groupoids (see Example~\ref{HaHgraph} below). But:

\begin{prop}\label{prop:cartan_iff_pi_homeo}
Suppose $G$ is a locally compact, Hausdorff and principal groupoid which admits a Haar system. Then $G$ is Cartan if and only if $r\times s$ is a topological isomorphism of $G$ onto $R(q)$.
\end{prop}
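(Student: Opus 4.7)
The ``only if'' direction is quick. If $r\times s$ is a topological isomorphism onto $R(q)$, then $R(q)$ is locally compact (being homeomorphic to $G$), while $\go$ is regular because it is locally compact Hausdorff. Lemma~\ref{lem:lc_implies_cartan} applied to the quotient map $q:\go\to\go/G$ yields that $R(q)$, and hence $G$, is Cartan.

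For the ``if'' direction, principality of $G$ makes $r\times s$ a continuous bijective groupoid homomorphism, so I only need to show it is open. The plan is to argue by contradiction: suppose openness fails at some $\gamma\in G$, so that there are an open neighbourhood $U$ of $\gamma$ and a net $(u_i,v_i)\to(u,v):=(r(\gamma),s(\gamma))$ in $R(q)$ with $(u_i,v_i)\notin(r\times s)(U)$ for all $i$. Principality produces unique $\alpha_i\in G\setminus U$ with $(r(\alpha_i),s(\alpha_i))=(u_i,v_i)$, and the goal is to prove $\alpha_i\to\gamma$; this will contradict $\gamma\in U$ and the closedness of $G\setminus U$.

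To engineer this convergence I will exploit the Haar system, which forces $r:G\to\go$ to be an open map. Openness of $r$ lets me choose, on a cofinal subnet, elements $\beta_i\in G$ lying in arbitrarily small neighbourhoods of $\gamma$ with $r(\beta_i)=u_i=r(\alpha_i)$, so that $\beta_i\to\gamma$. Then $\beta_i^{-1}\alpha_i$ is defined, with $r(\beta_i^{-1}\alpha_i)=s(\beta_i)\to s(\gamma)=v$ and $s(\beta_i^{-1}\alpha_i)=v_i\to v$. By the Cartan hypothesis, pick a wandering open neighbourhood $W$ of $v$: eventually $\beta_i^{-1}\alpha_i\in s^{-1}(W)\cap r^{-1}(W)$, which has compact closure. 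A convergent sub-subnet $\beta_i^{-1}\alpha_i\to\eta$ satisfies $r(\eta)=s(\eta)=v$, and principality of $G$ forces $\eta$ to be the unit $v$. Continuity of multiplication then gives $\alpha_i=\beta_i(\beta_i^{-1}\alpha_i)\to\gamma\cdot v=\gamma$ along that sub-subnet. Because every subnet of $(\alpha_i)$ admits a further subnet converging to $\gamma$, the whole net converges to $\gamma$, giving the contradiction.

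The main obstacle is the opening ``lifting'' step: producing the net $\beta_i\to\gamma$ whose ranges match those of the $\alpha_i$. This is where the Haar system hypothesis is genuinely needed, via the standard consequence that $r$ is open. Once the lift is in hand, only a \emph{single} wandering neighbourhood, at $v$, is required to extract compactness, and principality singles out the unit $v$ as the limit. An attractive feature of this strategy is that it avoids the tempting but awkward claim that ``cross-fibre'' sets $r^{-1}(W_u)\cap s^{-1}(W_v)$ have compact closure, which is not obvious from the Cartan hypothesis alone.
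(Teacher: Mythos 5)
Your proof is correct; both directions go through. (One cosmetic slip: what you call the ``only if'' direction --- a topological isomorphism forces $G$ to be Cartan --- is really the ``if'' half of the statement; your argument for it, via Lemma~\ref{lem:lc_implies_cartan} applied to $q:\go\to\go/G$, is exactly the paper's.) For the substantive direction your core mechanism coincides with the paper's: use openness of $r$ (equivalently $s$), which is what the Haar system buys, to manufacture translating elements $\beta_i$ with matching range near $\gamma$, so that $\beta_i^{-1}\alpha_i$ eventually lies in the relatively compact set $s^{-1}(W)\cap r^{-1}(W)$ attached to a \emph{single} wandering neighbourhood. The packaging differs. The paper proves that $r\times s$ is a closed map onto its image (Lemma~\ref{prop:Cartan_implies_pi_homeo}) and then uses the fact that a bijection is open if and only if it is closed, whereas you prove openness directly by contradiction. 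This has two consequences: the paper's technical lemma needs no principality (it only needs \emph{some} convergent subnet, whose limit automatically lies in the given closed set), and its $\beta_i$ need only remain in a fixed relatively compact neighbourhood $V$ of $\gamma$ rather than converge to $\gamma$ --- at the price of checking that the product $UV$ is relatively compact. You instead pay for the diagonal construction making $\beta_i\to\gamma$ over a cofinal subnet, but are rewarded with an exact identification of the limit: principality forces the cluster point $\eta$ of $\beta_i^{-1}\alpha_i$ to be the unit $v$, whence $\alpha_i\to\gamma$ along a subnet, which already contradicts $\alpha_i\in G\setminus U$ (your final remark that the whole net converges is not needed). Both routes are sound; the paper's isolates a principality-free closedness statement that is marginally more reusable, while yours is more self-contained and makes visible exactly where principality enters.
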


For the proof we need a technical lemma.

\begin{lemma}
\label{prop:Cartan_implies_pi_homeo}
Let  $G$ be a locally compact Hausdorff groupoid which admits a Haar system. If $G$ is Cartan, then $r\times s$ is a closed map onto its image $r\times s(G)$. 
\end{lemma}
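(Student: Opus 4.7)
The plan is to show that $r\times s\colon G\to r\times s(G)$ is in fact \emph{proper} (preimages of compact sets are compact), which for a continuous map between locally compact Hausdorff spaces implies closedness onto the image.

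The first observation is that every fibre of $r\times s$ is compact. Given $(y,z)\in r\times s(G)$, pick $\gamma_0\in(r\times s)^{-1}(y,z)$; the fibre equals $\gamma_0\cdot G_z^z$, where the isotropy group $G_z^z=r^{-1}(z)\cap s^{-1}(z)$ is closed in $G$. By the Cartan property at $z$, with wandering neighbourhood $N_z$, we have $G_z^z\subseteq\overline{r^{-1}(N_z)\cap s^{-1}(N_z)}$, which is compact; hence $G_z^z$ and the fibre are compact.

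The heart of the proof is the following local claim: for each $(y,z)\in r\times s(G)$ there exist open neighbourhoods $N$ of $y$ and $M$ of $z$ such that $r^{-1}(N)\cap s^{-1}(M)$ has compact closure in $G$. Granted this, if $K\subseteq r\times s(G)$ is compact, cover $K$ by finitely many such product neighbourhoods $N_i\times M_i$ (using compactness of $K$); then $(r\times s)^{-1}(K)\subseteq\bigcup_i\bigl(r^{-1}(N_i)\cap s^{-1}(M_i)\bigr)$ is contained in a finite union of relatively compact sets. Since $K$ is compact in the Hausdorff space $G^{(0)}\times G^{(0)}$, it is closed, and continuity makes $(r\times s)^{-1}(K)$ closed in $G$; a closed subset of a relatively compact set is compact. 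Hence $r\times s$ is proper, and thus closed, onto $r\times s(G)$.

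The main obstacle is establishing the local claim. The Cartan condition provides compact closures only for the ``diagonal'' sets $r^{-1}(N_u)\cap s^{-1}(N_u)$ attached to a single unit $u$; when $y\neq z$ one must bridge two different wandering neighbourhoods using the given arrow $\gamma_0$. Here is where the Haar system enters: it ensures that $r$ and $s$ are open, so I would pick an open relatively compact neighbourhood $V$ of $\gamma_0$ and set $N:=r(V)$ and $M:=s(V)$. The plan is then to show, by a multiplicative decomposition argument, that every $\eta\in r^{-1}(N)\cap s^{-1}(M)$ lies in a compact set built from $V$, $V^{-1}$, and the wandering compacts $K_{N_y}:=\overline{r^{-1}(N_y)\cap s^{-1}(N_y)}$ and $K_{N_z}:=\overline{r^{-1}(N_z)\cap s^{-1}(N_z)}$ at $y$ and $z$ (for some wandering $N_y\supseteq N$ and $N_z\supseteq M$). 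The idea is that for $\alpha\in V$ with $r(\alpha)=r(\eta)$ and $\gamma\in V$ with $s(\gamma)=s(\eta)$, the element $\alpha^{-1}\eta\gamma^{-1}$ has range in $s(V)=M$ and source in $r(V)=N$ and can be controlled by the Cartan compacts, so that $\eta$ itself is confined to a continuous image of a compact subset of $V\times K\times V$. The delicate point is checking this decomposition works uniformly over $\eta\in r^{-1}(N)\cap s^{-1}(M)$, since the groupoid is neither assumed to be \'etale nor principal and the compact isotropy groups near $y$ and $z$ must be absorbed into the chosen compacts; I expect this to be the technically hardest part of the argument.
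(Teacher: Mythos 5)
Your overall strategy --- establish that each $(y,z)\in r\times s(G)$ has a product neighbourhood $N\times M$ with $r^{-1}(N)\cap s^{-1}(M)$ relatively compact, and deduce properness and hence closedness onto the image --- is sound, and it is close in spirit to the paper's proof, which instead runs a net/subnet argument on a closed subset of $G$ but hinges on exactly the same kind of estimate. The preliminary observation about compact fibres is correct but not needed once the local claim is in hand.

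The genuine gap is in the one step you defer, and the decomposition you sketch for it does not work as written. If $\alpha,\gamma\in V$ with $r(\alpha)=r(\eta)$ and $s(\gamma)=s(\eta)$, then $\alpha^{-1}\eta\gamma^{-1}$ has range $s(\alpha)\in s(V)$ (near $z$) and source $r(\gamma)\in r(V)$ (near $y$); it is therefore an arrow between a neighbourhood of $z$ and a neighbourhood of $y$, i.e.\ it lies in (the inverse of) the very set $r^{-1}(N)\cap s^{-1}(M)$ whose relative compactness you are trying to prove, and neither Cartan compact $K_{N_y}$ nor $K_{N_z}$ controls it, since those only capture arrows whose range \emph{and} source both lie near the same unit. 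As it stands the argument is circular. The fix is to multiply on one side only: choose a wandering neighbourhood $N_y$ of $y$, so that $U:=r^{-1}(N_y)\cap s^{-1}(N_y)$ is relatively compact, and a relatively compact neighbourhood $V$ of $\gamma_0$ with $r(V)\subset N_y$; put $N:=N_y$ and $M:=s(V)$, which is open because the Haar system makes $s$ open. For $\eta\in r^{-1}(N)\cap s^{-1}(M)$ pick $\beta\in V$ with $s(\beta)=s(\eta)$; then $\eta\beta^{-1}$ has range $r(\eta)\in N_y$ and source $r(\beta)\in r(V)\subset N_y$, so $\eta\beta^{-1}\in U$ and $\eta\in UV$, which is relatively compact by continuity of multiplication. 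This is precisely the decomposition $\gamma_i=(\gamma_i\beta_i^{-1})\beta_i$ used in the paper; no second wandering neighbourhood at $z$ and no absorption of isotropy is needed. With this repaired, the rest of your reduction (properness implies closedness, using that the local claim also makes $r\times s(G)$ locally compact) goes through.
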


\begin{proof}
Let $C$ be a closed subset of $G$ and $(u,v)$ be a limit point of $r\times s(C)$ in $r\times s(G)$.  Then there exists $\gamma \in G$ such that $r\times s(\gamma) = (u,v)$ and a net $\{\gamma_i\}$ in $C$ such that $r\times s(\gamma_i) \to (u,v)$.  It suffices to show that 
$\{\gamma_i\}$ has a convergent subnet. Indeed, if $\gamma_{i_j} \to \gamma'$ then   $\gamma' \in C$ because $C$ is closed,  
$r\times s(\gamma')=(u,v)$ by continuity, and $(u,v) \in r\times s(C)$.

Since $G$ is Cartan, $u$ has a neighbourhood  $N$ in $G^{(0)}$ such that $U:=(r\times s)^{-1}(N \times N)$ is relatively compact in $G$.  Let  $V$ be a relatively compact neighbourhood of $\gamma$.  We may assume by shrinking $V$ that $V\subset r^{-1}(N)$, and hence that $r(V) \subset N$.   
The continuity of multiplication implies that $UV$ is relatively compact. 

We claim that $\gamma_i \in UV$ eventually.  To see this, we observe that the existence of the Haar system implies that $s$ is open \cite[Corollary, page~118]{S}, and hence $s(V)$ is a neighbourhood of $v=s(\gamma)$. Thus there exists $i_0$ such that  $s(\gamma_i) \in s(V)$ and $r(\gamma_i) \in N$ for all $i\geq i_0$.  For each $i\geq i_0$ there exists $\beta_i \in V$ such that $s(\gamma_i) = s(\beta_i)$.  Now
$s(\gamma_i\beta_i^{-1}) = r(\beta_i) \in r(V)\subset N$ and 
$r(\gamma_i\beta_i^{-1}) = r(\gamma_i) \in  N$, so  $\alpha_i:=\gamma_i\beta_i^{-1}$ is in $U$.   Thus  $\gamma_i = \alpha_i\beta_i \in UV$ for $i\geq i_0$, as claimed. Now $\{\gamma_i:i\geq i_0\}$ is a net in a 
relatively compact set, and hence has a convergent subnet, as required.
\end{proof}

\begin{proof}[Proof of Proposition~\ref{prop:cartan_iff_pi_homeo}]
Suppose that $G$ is Cartan.  The map $r\times s$ is always a continuous surjection onto $R(q)$, and it is injective because $G$ is principal. Since $G$ is Cartan, Lemma~\ref{prop:Cartan_implies_pi_homeo}  implies that $r\times s$ is closed as a map onto its image $R(q)$.  A bijection is open if and only if it is closed, so $r\times s$ is open. Hence $r\times s$ is a homeomorphism onto $R(q)$.

Conversely, suppose that $r\times s$ is a homeomorphism onto $R(q)$. Then $r\times s$ is an isomorphism of topological groupoids, and since $G$ is locally compact, so is $R(q)$. Thus $R(q)$ is Cartan by Lemma~\ref{lem:lc_implies_cartan}, and so is $G$.
\end{proof}

\subsection*{Concluding discussion} Lemma~\ref{lem:lc_implies_cartan} shows that, if the groupoid $R(\psi)$ associated to a quotient map is locally compact, then $R(\psi)$ is Cartan. On the other hand, Proposition~\ref{prop:cartan_iff_pi_homeo} says that, if there is a topology on $R(\psi)$ which makes it into a locally compact Cartan groupoid, then that topology has to be the relative topology from the product space $Y\times Y$. So one is tempted to seek conditions on $\psi$ which ensure that the subset $R(\psi) \subset Y\times Y$ is locally compact. By Proposition~\ref{prop:r(psi)_props}, it suffices for $\psi$ to be a local homeomorphism. This is not a necessary condition: for example, if $(Y,H)$ is a free Cartan transformation group with $H$ nondiscrete, then the transformation groupoid  $Y\times H$ is Cartan in our sense, but the quotient map $q:Y\to Y/H$ is not locally injective. (In \cite{aH} there is a specific example of a free Cartan transformation group which illustrates this.) However, the following example shows that something extra is needed.

\begin{ex}
Take $Y=[0,1]$ and $X=\{a,b\}$ with the topology $\{X,\{a\},\emptyset\}$, and define $\psi:Y\to X$ by 
\[
\psi(y)=\begin{cases}a&\text{if $t>0$}\\
b&\text{if $t=0$.}
\end{cases}
\]
Then $\psi$ is an open quotient map, but $R(\psi)=\big((0,1]\times (0,1]\big)\cup \{(0,0)\}$ is not a locally compact subset of $[0,1]\times[0,1]$ because $(0,0)$ does not have a compact neighbourhood.
\end{ex}

%%%%%%%%%%%%%%%%%%%%%%%%%%%%%%%%%%%%
\section{The groupoids whose $C^*$-algebras are Fell algebras}\label{Lisa}

We begin this section by summarising results from \cite{C,CaH2,BaH,MW2} about the twisted groupoid algebras of principal Cartan groupoids.

\begin{thm}\label{Cartan=Fell}
Suppose that $G$ is a second-countable, locally compact, Hausdorff, principal and Cartan groupoid which admits a Haar system. Then the following statements are equivalent:
\begin{enumerate}
\item\label{C=Fa} $G$ is Cartan;
\item\label{C=Fb} $C^*(G)$ is a Fell algebra;
\item\label{C=Fd} for all $\sigma\in Z^2(G,\T)$, $C^*(G, \sigma)$ is a Fell algebra.
\end{enumerate} 
If \eqref{C=Fa}--\eqref{C=Fd} these are satisfied, then $u\mapsto \Ind_u^{{\sigma}}$ induces a homeomorphism of $\go/G$ onto $C^*(G, \sigma)^\wedge$, and $C^*(G,\sigma)=C^*_{\red}(G,\sigma)$.
\end{thm}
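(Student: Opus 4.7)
The plan is to assemble the equivalences as a cycle (a)$\Rightarrow$(d)$\Rightarrow$(b)$\Rightarrow$(a) and then derive the spectrum and full-equals-reduced assertions together. The implication (d)$\Rightarrow$(b) is immediate upon taking $\sigma\equiv 1$. For (b)$\Rightarrow$(a), I would invoke the untwisted characterisation already in the literature \cite{C,CaH2}: if a principal, second-countable, locally compact Hausdorff groupoid with Haar system has $C^*(G)$ a Fell algebra, then $G$ is Cartan. The usual argument extracts wandering neighbourhoods directly from the rank-one projections guaranteed by the Fell property.

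The substantive content is (a)$\Rightarrow$(d), the twisted extension. I would follow the pattern of \cite{CaH2}, but with the twisted convolution \eqref{eq-ops2} in place of the ordinary one. Given $u\in\go$, the Cartan hypothesis supplies a wandering open neighbourhood $N$ of $u$; after shrinking $N$, the open bisection $s^{-1}(N)\cap r^{-1}(N)$ can be arranged so that the continuous cocycle $\sigma$ is trivialised there by a coboundary. Choose a nonnegative $f\in C_c(\go)\subset C_c(G,\sigma)$ with $f(u)=1$ and $\supp f\subset N$. A direct computation using \eqref{eq-ops2} then shows that $\Ind_v^{\sigma}(f*f^*)$ is a rank-one projection for every $v$ in a neighbourhood of the orbit $[u]$: the local trivialisation of $\sigma$ ensures that the cocycle contributes only a phase that cancels in $f*f^*$. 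This is the key place where the Cartan condition is used, and it witnesses the Fell property of $C^*(G,\sigma)$.

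For the final two assertions, I would first check that $[u]\mapsto \Ind_u^{\sigma}$ is a well-defined homeomorphism of $\go/G$ onto $C^*(G,\sigma)^\wedge$. Irreducibility of each $\Ind_u^\sigma$ follows from principality (trivial isotropy); two units give equivalent induced representations exactly when they share an orbit, because the rank-one projections above have distinct matrix coefficients on distinct orbits; continuity and openness of the map follow from the fact that each $[u]\mapsto\tr(\Ind_u^\sigma(f^**f))$ is a continuous function on $\go/G$ witnessing the quotient topology. Since every irreducible representation of $C^*(G,\sigma)$ is then unitarily equivalent to some $\Ind_u^\sigma$, the direct sum $\bigoplus_u \Ind_u^\sigma$ is faithful, giving $C^*(G,\sigma)=C^*_{\red}(G,\sigma)$. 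The main obstacle throughout is the interaction of the cocycle with the Fell-projection construction; once the wandering hypothesis is used to trivialise $\sigma$ locally, the calculations parallel the untwisted case carried out in \cite{BaH,MW2}, and the bulk of the work here is to cite and assemble those results coherently.
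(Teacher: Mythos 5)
Your outline of the easy implications matches the paper: (\ref{C=Fd})$\Rightarrow$(\ref{C=Fb}) by taking $\sigma\equiv 1$, and the equivalence of (\ref{C=Fa}) and (\ref{C=Fb}) by citing the untwisted characterisation (the paper uses \cite[Theorem~7.9]{C}). But your substantive step, (\ref{C=Fa})$\Rightarrow$(\ref{C=Fd}), contains a genuine error. First, $s^{-1}(N)\cap r^{-1}(N)$ is not a bisection: it is the full reduction $G|_N$, and $r$ fails to be injective on it whenever an orbit meets $N$ in more than one point (which is unavoidable for Cartan groupoids --- consider the trivial groupoid $N\times N$). More seriously, if $f\in C_c(\go)$ is supported in $N$ with $f(u)=1$, then by normalisation of $\sigma$ the element $f*f^*$ is just $|f|^2$ viewed as a function on $\go$, and $\Ind_v^\sigma(|f|^2)$ is the diagonal operator on $L^2(s^{-1}(v),\lambda_v)$ with entries $|f(r(\gamma))|^2$. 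Its rank equals the number of points of $[v]$ at which $f$ is nonzero, so it is not a rank-one projection unless each nearby orbit meets $\{f\neq 0\}$ in exactly one point --- false in general, and it is never a projection unless $f$ is $\{0,1\}$-valued. The correct rank-one elements are of the form $\gamma\mapsto g(r(\gamma))\overline{g(s(\gamma))}$ with a normalisation $\sum_{\gamma\in s^{-1}(v)}|g(r(\gamma))|^2=1$ holding on a neighbourhood of $[u]$, and in the twisted case the cocycle genuinely enters. The paper sidesteps this entirely: (\ref{C=Fb})$\Rightarrow$(\ref{C=Fd}) is obtained from \cite[Proposition~3.10]{BaH}, which realises $C^*(G,\sigma)$ as a direct summand of $C^*(G^\sigma)$ and transfers the Fell property, rather than by re-running the rank-one construction with a cocycle.

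The final assertions also have a gap. You assert that ``every irreducible representation of $C^*(G,\sigma)$ is then unitarily equivalent to some $\Ind_u^\sigma$,'' but this surjectivity is exactly what needs proving and it does not follow from irreducibility, the orbit criterion for equivalence, or continuity of trace functions; without it neither the homeomorphism onto $C^*(G,\sigma)^\wedge$ nor $C^*(G,\sigma)=C^*_{\red}(G,\sigma)$ follows. The paper's route is: orbits of a Cartan groupoid are closed, so $\go/G$ is T$_1$; \cite[Proposition~3.2]{CaH2} then gives that $u\mapsto[L^u]$ is a homeomorphism onto the spectrum of the Muhly--Williams algebra $C^*(G^{\overline\sigma};G)^{\MW}$; \cite[Lemma~3.1]{BaH} identifies that algebra with $C^*(G,\sigma)$; and Lemma~\ref{lem-rho} of the appendix matches $L^u$ with $\Ind_u^\sigma$ under this identification. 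Some such input establishing that all irreducible representations are induced is indispensable here.
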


\begin{proof}
The implication $\eqref{C=Fd}\Longrightarrow
\eqref{C=Fb}$ is trivial, the equivalence of \eqref{C=Fa} and \eqref{C=Fb} is Theorem~7.9 of \cite{C}, and the implication $\eqref{C=Fb}\Longrightarrow\eqref{C=Fd}$ is part (ii) of \cite[Proposition~3.10\,(a)]{BaH}. So it remains to prove the assertions in the last sentence. 

The orbits in a Cartan groupoid are closed \cite[Lemma~7.4]{C}, so the orbit space $Y/R(\psi)$ is T${}_1$, and it follows from \cite[Proposition~3.2]{CaH2} that the map $y\mapsto[L^u]$ described there induces a homeomorphism of $\go/G$ onto the spectrum of the twisted groupoid $C^*$-algebra $C^*(G^{\overline{\sigma}}; G)^{\MW}$ of Muhly and Williams (see \cite{MW2} or Appendix~\ref{app-twists2}). By \cite[Lemma~3.1]{BaH}, $C^*(G^{\overline{\sigma}}; G)^{\MW}$  is isomorphic  to $C^*(G,{\sigma})$, and by Lemma~\ref{lem-rho},  this isomorphism  carries the equivalence class of  $L^u$ to  the class of $\Ind^{{\sigma}}_u$. We deduce that $u\mapsto \Ind^{{\sigma}}_u$ induces a homeomorphism, as claimed. This implies in particular that all the  irreducible representations of $C^*(G, \sigma)$ are induced, so for $f\in C_c(G, {\sigma})$ we have
\[
\|f\|=\sup\{\|\Ind_y(f)\|:y\in Y\}=:\|f\|_{\red},
\]
and $C^*(G,{\sigma})=C^*_{\red}(G,{\sigma})$.
\end{proof}

For our first application of Theorem~\ref{Cartan=Fell}, we observe that putting the equivalence of \eqref{C=Fa} and \eqref{C=Fb} together with Proposition~\ref{prop:cartan_iff_pi_homeo} gives the following improvement of a result of Archbold and Somerset \cite[Corollary~5.9]{AS}. (We discuss the precise connection with \cite{AS} in Remark~\ref{rmk-AS}.)

\begin{cor}
\label{cor:Fell_iff_pi_homeo}
Suppose that $G$ is a second-countable, locally compact, Hausdorff and principal groupoid which admits a Haar system $\lambda$, and $q:\go\to \go/G$ is the quotient map.  Then $C^*(G,\lambda)$ is a Fell algebra if and only if $r\times s:G\to\go\times\go$ is a topological isomorphism of $G$ onto $R(q)$.
\end{cor}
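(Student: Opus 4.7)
My plan is to derive the corollary by chaining Theorem~\ref{Cartan=Fell} and Proposition~\ref{prop:cartan_iff_pi_homeo}. The hypotheses on $G$ in the corollary (second-countable, locally compact, Hausdorff, principal, with a Haar system) are exactly those required to invoke both ingredients, so the argument reduces to composing two biconditionals:
\[
C^*(G,\lambda) \text{ is a Fell algebra} \iff G \text{ is Cartan} \iff r\times s \text{ is a topological isomorphism of } G \text{ onto } R(q).
\]
The first biconditional is the equivalence of \eqref{C=Fa} and \eqref{C=Fb} in Theorem~\ref{Cartan=Fell}, and the second is Proposition~\ref{prop:cartan_iff_pi_homeo}.

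Concretely, for the forward direction I would first use \eqref{C=Fb}$\Longrightarrow$\eqref{C=Fa} of Theorem~\ref{Cartan=Fell} to conclude that $G$ is Cartan, and then apply Proposition~\ref{prop:cartan_iff_pi_homeo} to promote the Cartan property to the topological isomorphism of $G$ onto $R(q)$. For the converse I would reverse the order: Proposition~\ref{prop:cartan_iff_pi_homeo} converts the hypothesis that $r\times s$ is a topological isomorphism onto $R(q)$ into the Cartan property of $G$, and then \eqref{C=Fa}$\Longrightarrow$\eqref{C=Fb} of Theorem~\ref{Cartan=Fell} gives that $C^*(G,\lambda)$ is a Fell algebra.

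I do not anticipate any genuine obstacle. The substantive work has already been absorbed into Theorem~\ref{Cartan=Fell} (whose \eqref{C=Fa}$\Leftrightarrow$\eqref{C=Fb} part rests on Clark's characterisation \cite{C}) and into Proposition~\ref{prop:cartan_iff_pi_homeo} (whose nontrivial half is powered by the closedness argument in Lemma~\ref{prop:Cartan_implies_pi_homeo}). The only bookkeeping needed is to verify that the standing hypotheses propagate verbatim to both invoked statements, which they do. The corollary's role is thus to repackage these two deep inputs as a clean topological criterion for $C^*(G,\lambda)$ to be a Fell algebra, sharpening the Archbold--Somerset result mentioned in the introduction to this section.
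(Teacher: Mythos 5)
Your proposal is correct and is exactly the paper's argument: the authors state the corollary as an immediate consequence of "putting the equivalence of (a) and (b) [in Theorem~\ref{Cartan=Fell}] together with Proposition~\ref{prop:cartan_iff_pi_homeo}," which is precisely your chain of two biconditionals. No further comment is needed.
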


\begin{rmk}\label{rmk-AS} 
The ``separated topological equivalence relations'' $R$ studied in \cite[\S5]{AS} are the second-countable, locally compact, Hausdorff and principal groupoids that are  \'etale.  When they say in \cite[Corollary~5.9]{AS} that ``the topologies $\tau_p$ and $\tau_0$ coincide,'' they mean precisely that the map $r\times s$ is a homeomorphism for the original topology $\tau_0$ on $R$ and the product topology on $R^{(0)}\times R^{(0)}$. Theorem~\ref{Cartan=Fell} implies that the full algebra above and the reduced algebra in \cite{AS} coincide. 
So Corollary~\ref{cor:Fell_iff_pi_homeo} extends \cite[Corollary~5.9]{AS} from principal \`etale groupoids to principal groupoids which admit a Haar system. This is a substantial generalisation since, for example, locally compact transformation groups always admit a Haar system \cite[page 17]{Ren} even though the associated transformation groupoids may not be \'etale. Our proof of Corollary~\ref{cor:Fell_iff_pi_homeo} seems quite different from the representation-theoretic arguments used in \cite{AS}. \end{rmk}

Next we apply Theorem~\ref{Cartan=Fell} to the groupoid associated to a local homeomorphism.

\begin{cor}\label{thm standard groupoid}
Let $\psi:Y\to X$ be a local homeomorphism of a second-countable,  
locally compact and Hausdorff space $Y$ onto  a topological space  $X$, and let $\sigma:R(\psi)^{(2)}\to\T$ be a continuous normalised $2$-cocycle. Then $C^*(R(\psi), \sigma)$ is a Fell algebra, $y\mapsto\Ind_y^{{\sigma}}$ 
induces a homeomorphism of $Y/R(\psi)$ onto $C^*(R(\psi), \sigma)^\wedge$, and
$C^*(R(\psi),\sigma)=C^*_{\red}(R(\psi),\sigma)$.
\end{cor}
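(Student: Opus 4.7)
The plan is to obtain the corollary as a direct specialisation of Theorem~\ref{Cartan=Fell} applied to $G := R(\psi)$, so the bulk of the work consists of checking that the hypotheses of that theorem are satisfied in this setting. Proposition~\ref{prop:r(psi)_props} already delivers most of this: since $\psi$ is a surjective local homeomorphism of a locally compact Hausdorff space, $R(\psi)$ is locally compact, Hausdorff, principal, étale and Cartan. The remaining two hypotheses are cheap: second-countability of $R(\psi)$ follows because $Y$ is second-countable, hence so is $Y\times Y$, and $R(\psi)$ carries the subspace topology; and because $R(\psi)$ is étale, it admits the canonical Haar system of counting measures on the discrete fibres $r^{-1}(y)$.

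With these hypotheses in hand, the implication $\eqref{C=Fa}\Longrightarrow\eqref{C=Fd}$ of Theorem~\ref{Cartan=Fell} gives that $C^*(R(\psi),\sigma)$ is a Fell algebra for every continuous normalised $2$-cocycle $\sigma$. The final sentence of Theorem~\ref{Cartan=Fell} then supplies the two remaining conclusions simultaneously: the map $y\mapsto \Ind_y^\sigma$ factors through an identification of $Y/R(\psi)$ with the spectrum $C^*(R(\psi),\sigma)^\wedge$ as topological spaces, and the full and reduced twisted $C^*$-algebras coincide.

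Since the corollary is essentially a repackaging of Theorem~\ref{Cartan=Fell} in the concrete situation of an equivalence-relation groupoid attached to a local homeomorphism, I do not anticipate a genuine obstacle here; the only thing requiring any care is to assemble the hypotheses from the correct references (Proposition~\ref{prop:r(psi)_props} for the structural properties, and the second-countability and Haar system observations above).
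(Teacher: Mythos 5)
Your proposal is correct and follows exactly the paper's route: the paper's proof is the one-line observation that Proposition~\ref{prop:r(psi)_props} shows $R(\psi)$ satisfies the hypotheses of Theorem~\ref{Cartan=Fell}, which then gives all three conclusions. Your additional remarks on second-countability of $R(\psi)\subset Y\times Y$ and the counting-measure Haar system on an \'etale groupoid simply make explicit two points the paper leaves tacit.
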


\begin{proof} 
Proposition~\ref{prop:r(psi)_props} implies that $R(\psi)$ satisfies all the hypotheses of Theorem~\ref{Cartan=Fell}, which then gives the result.
\end{proof}

When $X$ is Hausdorff, we know from \cite{Kumjian} that $C^*(R(\psi))$ has cotinuous trace; the twisted versions we used in \cite{RT} to provide examples of continuous-trace algebras with nonzero Dixmier-Douady class.

We now give our promised shorter proof of the converse of Lemma~\ref{Lem:FellLLC_LH}.

\begin{cor}\label{new}
Let $X$ be a second-countable, locally locally-compact and locally Hausdorff topological space. Then there is a separable  Fell $C^*$-algebra with spectrum $X$.
\end{cor}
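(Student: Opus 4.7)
The plan is to assemble this as a direct corollary of the structural results already established. First I would apply Proposition~\ref{charllc}\eqref{charllc-b} to the second-countable, locally locally-compact, locally Hausdorff space $X$: this produces a second-countable, locally compact Hausdorff space $Y$ together with a surjective local homeomorphism $\psi:Y\to X$. In particular $\psi$ is an open continuous surjection, hence a quotient map, so Lemma~\ref{X=orbitsp}\eqref{itemc} gives a homeomorphism $h:X\to Y/R(\psi)$ satisfying $h\circ\psi=q$.

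Next I would feed $\psi$ into Corollary~\ref{thm standard groupoid} (with trivial cocycle $\sigma=1$). The corollary tells us directly that $A:=C^*(R(\psi))$ is a Fell algebra, and that $y\mapsto\Ind_y$ induces a homeomorphism $Y/R(\psi)\xrightarrow{\cong}\hat A$. Composing with $h$ yields a homeomorphism $X\cong\hat A$, which is what we need.

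The only remaining point is separability of $A$. Since $Y$ is second-countable, so is $Y\times Y$, and hence so is the subspace $R(\psi)$; being locally compact Hausdorff and second-countable, $R(\psi)$ is $\sigma$-compact and $C_c(R(\psi))$ contains a countable dense subset in the inductive limit topology, which maps to a countable dense subset of $C^*(R(\psi))$. Thus $A$ is separable.

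I do not expect a serious obstacle here: every step is a direct invocation of a result already proved in the excerpt, and the novelty of the statement (compared to \cite[Theorem~6.6(2)]{aHKS}) lies entirely in the economy afforded by Proposition~\ref{charllc}\eqref{charllc-b} combined with Corollary~\ref{thm standard groupoid}. The mildest care is needed in verifying separability from second-countability of $R(\psi)$, but this is standard for groupoid $C^*$-algebras of second-countable locally compact Hausdorff \'etale groupoids.
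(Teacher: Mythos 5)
Your argument is correct and follows essentially the same route as the paper: Proposition~\ref{charllc}\eqref{charllc-b} to produce $\psi:Y\to X$, Lemma~\ref{X=orbitsp} for the homeomorphism $X\cong Y/R(\psi)$, and Corollary~\ref{thm standard groupoid} to conclude that $C^*(R(\psi))$ is a Fell algebra with the right spectrum. Your explicit verification of separability from second-countability of $R(\psi)$ is a small but welcome addition that the paper leaves implicit.
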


\begin{proof}
By Proposition~\ref{charllc}, there are a second-countable, locally compact and  Hausdorff space $Y$ and a surjective local homeomorphism $\psi:Y\to X$. Consider the topological relation $R(\psi)$. 
Lemma~\ref{X=orbitsp} gives a homeomorphism $h$ of $X$ onto $Y/R(\psi)$, and Corollary~\ref{thm standard groupoid} says that $C^*(R(\psi))$ is a separable Fell algebra with spectrum homeomorphic to $Y/R(\psi)$.
\end{proof}

Next we consider a row-finite directed graph $E$ with no sources, using the conventions of \cite{CBMS}; we also use the more recent convention that, for example, 
\[
vE^nw=\{\alpha\in E^n:r(\alpha)=v,\;s(\alpha)=w\}. 
\]
The infinite-path space $E^\infty$ has a locally compact Hausdorff topology with basis the cylinder sets
\[
Z(\alpha)=\{\alpha x:\text{ $x\in E^\infty$ and $r(x)=s(\alpha)$}\}.
\]
\cite[Corollary~2.2]{KPRR}. The set
\begin{equation*}\label{defGE}
G_E=\{(x,k,y)\in E^\infty\times \Z\times E^\infty:\text{there exists $n$ such that $x_i=y_{i+k}$ for $i\geq n$}\}
\end{equation*}
is a groupoid with unit space $G_E^{(0)}=E^\infty$, and this groupoid is locally compact, Hausdorff and \'etale in a topology which has a neighbourhood basis consisting of the sets
\[
Z(\alpha,\beta)=\{(\alpha z,|\beta|-|\alpha|,\beta z):z\in E^\infty,\ r(z)=s(\alpha)\}
\]
parametrised by pairs of finite paths $\alpha,\beta\in E^*$ with $s(\alpha)=s(\beta)$ \cite[Proposition~2.6]{KPRR}. We know from \cite[Proposition~8.1]{HaH} that $G_E$ is principal if and only if $E$ has no cycles (in which case we say $E$ is \emph{acyclic}). 

We write $x\sim y$ to mean $(x,k,y)\in G_E$ for some $k$; this equivalence relation on $E^\infty$ is called \emph{tail equivalence with lag}.

\begin{prop}\label{Cartangraph}
Suppose that $E$ is an acyclic row-finite directed graph with no sources. Then $G_E$ is Cartan if and only if the quotient map $q:E^\infty\to E^\infty/\!\!\sim$ is a local homeomorphism.
\end{prop}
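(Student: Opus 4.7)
The plan is to prove the two directions separately: the forward implication follows essentially formally from Proposition~\ref{prop:cartan_iff_pi_homeo} and Lemma~\ref{lem:qtnt map}, while the converse requires a direct construction of wandering neighbourhoods.

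For the forward direction, suppose $G_E$ is Cartan. Since $G_E$ is \'etale, the orbit map $q$ is open, as noted in Section~\ref{sec-back}, and hence a quotient map. The groupoid $G_E$ is locally compact and Hausdorff, it is principal by the acyclicity of $E$, and it admits a counting-measures Haar system by \'etaleness. Proposition~\ref{prop:cartan_iff_pi_homeo} then provides a topological isomorphism $r \times s : G_E \to R(q)$. In particular $R(q)$ is \'etale, so Lemma~\ref{lem:qtnt map} forces $q$ to be a local homeomorphism.

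For the converse, suppose $q$ is a local homeomorphism and fix $y \in E^\infty$. I would first choose an open neighbourhood $U$ of $y$ on which $q$ is a homeomorphism onto $q(U)$, and then, using that $E^\infty$ is locally compact Hausdorff, an open neighbourhood $W$ of $y$ with $\overline{W} \subset U$ and $\overline{W}$ compact. To show $W$ is wandering, suppose $(x,k,z) \in s^{-1}(W) \cap r^{-1}(W)$. Then $x,z \in W \subset U$ with $q(x) = q(z)$, so the injectivity of $q|_U$ forces $x = z$; the relation $x_i = x_{i+k}$ for large $i$ then forces $k = 0$, since otherwise $x_n x_{n+1}\cdots x_{n+k-1}$ would be a cycle in $E$. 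Hence $s^{-1}(W) \cap r^{-1}(W)$ coincides with $W$ as a subset of $G_E^{(0)}$. Since $G_E^{(0)}$ is closed in the Hausdorff groupoid $G_E$ (as the equaliser of $s$ and the identity on $G_E$), the closure of this set in $G_E$ equals its closure $\overline{W}$ in $E^\infty$, which is compact by construction.

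The anticipated main obstacle is in the converse: one needs to see that a neighbourhood on which $q$ is injective, combined with acyclicity, rules out all non-unit morphisms with both source and range in the neighbourhood. Once this observation is in hand, Hausdorffness of $G_E$ lets the closure computation pass from the groupoid to the unit space, and compactness comes from the standard local-compactness shrinking.
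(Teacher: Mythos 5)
Your proof is correct. The forward direction is exactly the paper's: openness of $q$ (from \'etaleness) makes it a quotient map, Proposition~\ref{prop:cartan_iff_pi_homeo} gives the topological isomorphism $r\times s:G_E\to R(q)$, and Lemma~\ref{lem:qtnt map} converts \'etaleness of $R(q)$ into $q$ being a local homeomorphism. For the converse you genuinely diverge. The paper does not verify the Cartan condition directly; instead it proves that $r\times s$ is open by showing that the image of each basic set $Z(\alpha,\beta)$ contains a neighbourhood $(Z(\alpha\mu)\times Z(\beta\mu))\cap R(q)$ of each of its points (using injectivity of $q|_{Z(\mu)}$), concludes that $r\times s$ is a topological isomorphism onto $R(q)$, and then imports Cartan-ness from $R(q)$ via Proposition~\ref{prop:r(psi)_props}. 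You instead exhibit a wandering neighbourhood: on a set $W$ with $\overline W$ compact and contained in an injectivity neighbourhood $U$ of $q$, any $(x,k,z)\in s^{-1}(W)\cap r^{-1}(W)$ has $x=z$ by injectivity and $k=0$ by acyclicity (equivalently, by the triviality of isotropy that makes $G_E$ principal), so $s^{-1}(W)\cap r^{-1}(W)$ is a copy of $W$ sitting inside the closed unit space, and its closure is the compact set $\overline W$. All the steps check out, including the closure computation (the unit space is closed in the Hausdorff groupoid $G_E$, and its subspace topology is the cylinder-set topology on $E^\infty$). Your argument is shorter and more elementary for the statement as posed; the paper's argument buys more, namely the explicit identification of $G_E$ with $R(q)$ as topological groupoids, which is what feeds directly into Corollary~\ref{cor:Fell_iff_pi_homeo} to characterise when $C^*(E)$ is a Fell algebra. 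Of course, via Proposition~\ref{prop:cartan_iff_pi_homeo} your route recovers that identification a posteriori, so nothing is lost.
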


\begin{proof}
Suppose that $G_E$ is Cartan. The quotient space $E^\infty/\!\!\sim$ is the orbit space of $G_E$. Thus Proposition~\ref{prop:cartan_iff_pi_homeo} implies that $r\times s$ is an isomorphism of topological groupoids of $G_E$ onto $R(q)$. Since $G_E$ is \'etale, so is $R(q)$. Thus Lemma~\ref{lem:qtnt map} implies that $q$ is a local homeomorphism.

Conversely, suppose that $q$ is a local homeomorphism. We claim that $r\times s:G_E\to R(q)$ is an isomorphism of topological groupoids. Then, since we know from Proposition~\ref{prop:r(psi)_props} that $R(q)$ is Cartan, we can deduce that $G_E$ is Cartan too.

To prove the claim, we observe first that $r \times s$ is an isomorphism of algebraic groupoids because $G_E$ is principal, and is continuous because $r$ and $s$ are. So it suffices to take $\alpha,\beta\in E^*$ with $s(\alpha)=s(\beta)$, and prove that $r\times s(Z(\alpha,\beta))$ is open. A typical element of $r\times s(Z(\alpha,\beta))$ has the form $(\alpha z,\beta z)$ for some $z\in E^\infty$ with $r(z)=s(\alpha)$. Since $q$ is a local homeomorphism there is an initial segment $\mu$ of $z$ such that $q|_{Z(\mu)}$ is one-to-one. We will prove that $(Z(\alpha\mu)\times Z(\beta\mu))\cap R(q)$ is contained in $r\times s(Z(\alpha,\beta))$.

Let $(x,y)\in(Z(\alpha\mu)\times Z(\beta\mu))\cap R(q)$. Then there are $x', y'\in E^\infty$ such that $x=\alpha\mu x'$ and $y=\beta\mu y'$, and $q(x)=q(y)$ implies $q(\mu x')=q(\mu y')$. Both $\mu x'$ and $\mu y'$ are in $Z(\mu)$, so injectivity of $q|_{Z(\mu)}$ implies that $\mu x'=\mu y'$, and $x'=y'$. But now
\[
(x,y)=(\alpha\mu x',\beta\mu x')=r\times s(\alpha\mu x',|\beta|-|\alpha|,\beta\mu x')
\]
belongs to $r\times s(Z(\alpha,\beta))$. Thus $r\times s(Z(\alpha,\beta))$ contains a neighbourhood of $(\alpha z,\beta z)$, and $r\times s(Z(\alpha,\beta))$ is open. Now  we have proved our claim, and the result follows.
\end{proof} 

The groupoid $G_E$ was originally invented  as a groupoid whose $C^*$-algebra is the universal algebra $C^*(E)$ generated by a Cuntz-Krieger $E$-family \cite[Theorem~4.2]{KPRR}. In view of Corollary~\ref{cor:Fell_iff_pi_homeo}, Proposition~\ref{Cartangraph} implies that $C^*(E)=C^*(G_E)$ is a Fell algebra if and only if $q:E^\infty\to E^\infty/\!\!\sim$ is a local homeomorphism. So it is natural to ask whether we can identify this property at the level of the graph. We can:

\begin{prop}\label{prop-graphcriteria}
Suppose that $E$ is an acyclic row-finite directed graph with no sources. Then the quotient map $q:E^\infty\to E^\infty/\!\!\sim$ is a local homeomorphism if and only if, for every $x\in E^\infty$, there exists $n$ such that
\begin{equation}\label{charFell}
s(x_n)E^*s(\mu)=\{\mu\}\text{ for every $\mu\in E^*$ with $r(\mu)=s(x_n)$.}
\end{equation}
\end{prop}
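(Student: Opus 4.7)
The plan is to exploit that $G_E$ is \'etale: its orbit map $q$ is automatically open (as noted in Section~\ref{sec-back}), so $q$ is a local homeomorphism if and only if it is locally injective. Since the cylinders $Z(x_1\cdots x_n)$ form a neighbourhood basis at $x\in E^\infty$, this reduces the theorem to showing that $q|_{Z(x_1\cdots x_n)}$ is one-to-one for some $n$ if and only if \eqref{charFell} holds at that $n$. Fix $\alpha:=x_1\cdots x_n$ and treat the two implications separately.

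For \eqref{charFell}$\,\Rightarrow\,$injectivity of $q|_{Z(\alpha)}$: take $y=\alpha y'$ and $z=\alpha z'$ in $Z(\alpha)$ with $y\sim z$. Tail equivalence with some lag $k\in\Z$ supplies $j_0\geq 1$ such that $y'_j=z'_{j+k}$ for all $j\geq j_0$. Writing the common tail in two ways produces finite paths $\mu_1:=y'_1\cdots y'_{j_0-1}$ and $\mu_2:=z'_1\cdots z'_{j_0+k-1}$ (interpreted as the vertex $s(x_n)$ if the index range is empty), both running from $s(x_n)$ to the common vertex $v:=r(y'_{j_0})=r(z'_{j_0+k})$. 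Applying \eqref{charFell} with $\mu:=\mu_2$ forces $\mu_1=\mu_2$, and combined with the matching tails this yields $y'=z'$, hence $y=z$.

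For injectivity$\,\Rightarrow\,$\eqref{charFell}: fix $\mu\in E^*$ with $r(\mu)=s(x_n)$ and a competitor $\nu\in s(x_n)E^*s(\mu)$; using the no-sources hypothesis, pick any $\xi\in s(\mu)E^\infty$. Then $\alpha\mu\xi$ and $\alpha\nu\xi$ both lie in $Z(\alpha)$ and are tail-equivalent with lag $|\nu|-|\mu|$, so injectivity forces $\mu\xi=\nu\xi$ as edge sequences. If $|\mu|<|\nu|$, cancelling gives $\xi=(\nu_{|\mu|+1}\cdots\nu_{|\nu|})\xi$, which exhibits $\nu_{|\mu|+1}\cdots\nu_{|\nu|}$ as a cycle in $E$ and contradicts acyclicity; so $|\mu|=|\nu|$ and then $\mu=\nu$.

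The trickiest bookkeeping will be in the first implication: keeping the indices straight in the tail-equivalence cancellation and handling the corner case $j_0=1$ where the prefix paths degenerate to vertices. In that corner, applying \eqref{charFell} to the empty path at $s(x_n)$ provides acyclicity at $s(x_n)$, which rules out the degenerate situation; the hypotheses on $E$ are thus used as acyclicity (to force equal lengths in the second direction) and no-sources (to produce the witness $\xi$).
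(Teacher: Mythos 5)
Your proposal is correct and follows essentially the same route as the paper's proof: both reduce to local injectivity of $q$ on cylinder sets (using that $q$ is open because $G_E$ is \'etale), prove \eqref{charFell} implies injectivity on $Z(x_1\cdots x_n)$ by cancelling the common infinite tail and applying \eqref{charFell} to the two finite prefixes, and prove the reverse implication by attaching a common infinite path to two distinct elements of $s(x_n)E^*s(\mu)$, with acyclicity ruling out one being a proper initial segment of the other. Your index bookkeeping and corner cases all check out, so there is nothing further to add.
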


\begin{proof}
Suppose first that $q$ is a local homeomorphism, and $x\in E^\infty$. Then there is an initial segment $\mu=x_1x_2\cdots x_n$ of $x$ such that $q|_{Z(\mu)}$ is injective. We claim that this $n$ satisfies \eqref{charFell}. Suppose not. Then there exist $\alpha,\beta\in s(x_n)E^*$ such that $\alpha\not=\beta$ and $s(\alpha)=s(\beta)$. Neither can be an initial segment of the other, since this would give a cycle at $s(\alpha)$. So there exists $i\leq \min(|\alpha|,|\beta|)$ such that $\alpha_i\not=\beta_i$. Then for any $y\in E^\infty$ with $r(y)=s(\alpha)$, $\mu\alpha y$ and $\mu\beta y$ are distinct paths in $Z(\mu)$ with $q(\mu\alpha y)=q(\mu\beta y)$, which contradicts the injectivity of $q|_{Z(\mu)}$.

Conversely, suppose that $E$ has the property described, and let $x\in E^\infty$. Take $n$ satisfying \eqref{charFell}, and $\mu:=x_1x_2\cdots x_n$. We claim that $q|_{Z(\mu)}$ is injective. Suppose $y=\mu y', z=\mu z'\in Z(\mu)$ and $q(y)=q(z)$. Then $q(y')=q(z')$, and there exist paths $\gamma,\delta$ in $s(x_n)E^*$ such that $y'=\gamma y''$, $z'=\delta y''$, say. The existence of $y''$ forces $s(\gamma)=s(\delta)$, and \eqref{charFell} implies that $\gamma=\delta$, $y'=z'$ and $y=z$. Thus $q$ is locally injective. Since $G_E$ is \'etale, the quotient map $q$ is open, and hence $q$ is a local homeomorphism.
\end{proof}

\begin{ex}\label{HaHgraph} Consider the following graph $E$ from \cite[Example~8.2]{HaH}:
\[
\begin{alternativegraphic}[2]{merged_fun_with_groupoids_graphics}
\begin{tikzpicture}[>=stealth,baseline=(current bounding box.center)] 
\def\cellwidth{5.5};
\clip (-5em,-5.6em) rectangle (3*\cellwidth em + 4.5em,0.3em);
%\fill [lightgray] (-20em,-20em) rectangle (80em,80em);

\foreach \x in {1,2,3,4} \foreach \y in {0} \node (x\x y\y) at (\cellwidth*\x em-\cellwidth em,-3*\y em) {$\scriptstyle v_{\x}$};
\foreach \x in {1,2,3,4} \foreach \y in {1} \node (x\x y\y) at (\cellwidth*\x em-\cellwidth em,-3*\y em) {};
\foreach \x in {1,2,3,4} \foreach \y in {2} \node (x\x y\y) at (\cellwidth*\x em-\cellwidth em,-1.5em-1.5*\y em) {};
\foreach \x in {1,2,3,4} \foreach \y in {1,2} \fill[black] (x\x y\y) circle (0.15em);

\foreach \x in {1,2,3,4} \draw [<-, bend left] (x\x y0) to node[anchor=west] {$\scriptstyle f_{\x}^{(2)}$} (x\x y1);
\foreach \x in {1,2,3,4} \draw [<-, bend right] (x\x y0) to node[anchor=east] {$\scriptstyle f_{\x}^{(1)}$} (x\x y1);

% first horizontal line
\foreach \x / \z in {1/2,2/3,3/4} \draw[black,<-] (x\x y0) to node[anchor=south] {} (x\z y0);

\foreach \x in {1,2,3,4} \draw [<-] (x\x y1) -- (x\x y2);
%\foreach \x in {1,2,3,4} \draw [<-] (x\x y2) -- (x\x y3);
\foreach \x in {1,2,3,4} {
	\node (endtail\x) at (\cellwidth*\x em-\cellwidth em, -6em) {};
	\draw [dotted,thick] (x\x y2) -- (endtail\x);
}

%now the trailing dots
\node(endtailone) at (3*\cellwidth em + 2.5em,0em) {};
\draw[dotted,thick] (x4y0) -- (endtailone);
\end{tikzpicture}
\end{alternativegraphic}
\]
Let $G_E$ be the path groupoid. Let $z$ be the infinite path with range $v_1$ that passes through each $v_n$, and, for $n\geq 1$, let $x_n$ be the infinite path with range $v_1$ that includes the edge $f_n^{(1)}$.   It is shown in \cite[Example~8.2]{HaH} that the sequence $\{x_n\}$ ``converges $2$-times in $E^\infty/G_E$ to $z$'', and it then follows from \cite[Lemma~5.1]{CaH2} that $G_E$ is not Cartan.  Applying the criteria of Proposition~\ref{prop-graphcriteria} seems to give an easier proof of this: let $z$ be as above.  For each $n$, 
$s(z_n)=v_{n+1}$ and $s(z_n)E^*s(f_{n+1}^{(1)})=\{f_{n+1}^{(1)}, f_{n+1}^{(2)}\}$. Thus $q:E^\infty\to E^\infty/\!\!\sim$ is not a local homeomorphism by Proposition~\ref{prop-graphcriteria} and hence $G_E$ is not Cartan by Proposition~\ref{Cartangraph}.  Proposition~\ref{prop:cartan_iff_pi_homeo} implies that $r\times s$ is not open.
\end{ex}

%%%%%%%%%%%%%%%%%%%%%%%%%%%%%%%%%%%%%%%%%%%%%%%%%%%%%%%%%%%%%%%%%%%

%%%%%%%%%%%%%%%%%%%%%%%%%%%%%%%%%%%%%%%%%%%%%%

\section{Fell algebras with trivial Dixmier-Douady invariant}\label{trivialDD}

If $A$ is a Fell algebra, we write $\delta(A)$ for its Dixmier-Douady invariant, as defined in \cite[Section~7]{aHKS}.
If $A$ is a continuous-trace algebra, then $\delta(A)$ makes sense, and $A$ also has a Dixmier-Douady invariant $\deltaDD(A)$ as in \cite[\S5.3]{tfb}, for example; as pointed out in \cite[Remark~7.10]{aHKS}, it is not clear whether these invariants are the same. Recall that the properties of being a Fell algebra or having continuous trace are preserved under Morita equivalence by  \cite[Corollary~14]{aHRW} and \cite[Corollary~3.5]{zettl2}.

The following is implicitly assumed in \cite{aHKS}.

\begin{thm}\label{thm-true}
Let $A$ be a separable Fell algebra.  Then the Dixmier-Douady invariant $\delta(A)$ of $A$ is $0$ if and only if there is is a local 
homeomorphism $\psi$ of a second-countable, locally compact and Hausdorff space onto a topological space such that $A$ is Morita equivalent to $C^*_{\red}(R(\psi))$.
\end{thm}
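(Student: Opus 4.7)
The plan is to identify separable Fell algebras with twisted groupoid $C^*$-algebras over groupoids of the form $R(\psi)$, and to interpret the vanishing of $\delta(A)$ as triviality of the twist. The machinery of \cite{aHKS} provides the required model: for any separable Fell algebra $A$ one obtains a second-countable, locally compact Hausdorff space $Y$, a surjective local homeomorphism $\psi:Y\to\hat A$, and a twist $\Gamma$ over the principal, \'etale, Cartan groupoid $R(\psi)$, such that $A$ is Morita equivalent to Kumjian's twisted algebra $C^*(R(\psi);\Gamma)$, and such that $\delta(A)$ is represented by the cohomology class of $\Gamma$ in the relevant twist-cohomology group.

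\paragraph{The ``only if'' direction.} Assuming $\delta(A)=0$, the twist $\Gamma$ has trivial class and is therefore $\T$-equivariantly isomorphic to the trivial twist $R(\psi)\times\T$. The induced isomorphism $C^*(R(\psi);\Gamma)\cong C^*(R(\psi))$, combined with Corollary~\ref{thm standard groupoid} (which tells us that the full and reduced $C^*$-algebras of $R(\psi)$ coincide), gives that $A$ is Morita equivalent to $C^*_{\red}(R(\psi))$, as required.

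\paragraph{The ``if'' direction.} Conversely, if $A$ is Morita equivalent to $C^*_{\red}(R(\psi))$ for such a $\psi$, then by Corollary~\ref{thm standard groupoid} $A$ is Morita equivalent to $C^*(R(\psi))$, which is precisely the twisted algebra for the trivial twist $R(\psi)\times\T$. Since the class of a trivial twist is zero, Morita invariance of $\delta$ forces $\delta(A)=0$.

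\paragraph{Main obstacle.} The real work is in justifying that the Dixmier-Douady invariant of \cite[\S7]{aHKS}, defined via a \v{C}ech cocycle on $\hat A$, corresponds under the Morita equivalence above to the cohomology class of the twist $\Gamma$ over $R(\psi)$. Once this identification is granted, via careful inspection of the constructions in \cite{aHKS} together with the comparison of Kumjian's twist algebras with Renault's cocycle algebras established in Appendix~\ref{app-twists2}, the remaining arguments are essentially book-keeping.
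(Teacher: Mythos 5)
Your proposal follows essentially the same route as the paper's proof: both use the machinery of \cite{aHKS} to realise $A$, up to Morita equivalence, as Kumjian's twisted algebra of a twist $\Gamma$ over $R(\psi)$ for the spectral map $\psi$, and both identify the vanishing of $\delta(A)$ with triviality of that twist, with the full-equals-reduced identification closing the loop. The one place where the paper is slightly more careful is the ``only if'' direction: since $\delta(A)$ is $\partial^1([\underline\Gamma])$ for a boundary map in a long exact sequence, its vanishing does not by itself give that $\Gamma$ is isomorphic to the trivial twist, and the paper invokes \cite[Lemma~7.12]{aHKS} to conclude that $\Gamma$ is \emph{equivalent} to the trivial twist, yielding Morita equivalence (rather than isomorphism) of the associated algebras.
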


\begin{proof} We start by recalling how $\delta(A)$ is defined in \cite{aHKS}. By \cite[Theorem~5.17]{aHKS}, $A$ is Morita equivalent to a $C^*$-algebra $C$ which has a diagonal $C^*$-subalgebra $D$; let $h:\hat C\to \hat A$ be an associated Rieffel homeomorphism. By \cite[Theorem~3.1]{K}, there is a twist $\Gamma\to R$ over an \'etale and principal  groupoid $R$ such that $C$ is isomorphic to Kumjian's $C^*$-algebra $C^*(\Gamma;R)^{\Kum}$ of the twist. Since $C$ is a Fell algebra we may by \cite[Proposition~6.3]{aHKS} assume that $R=R(\psi)$, where $\psi:\hat D\to\hat C$ is the spectral map, which is a local homeomorphism by \cite[Theorem~5.14]{aHKS}. 

By \cite[Remark~2.9]{Kumjian2}, there is an extension $\underline{\Gamma}\to R(\psi)$ where $\underline{\Gamma}$ is the groupoid consisting of germs of continuous local sections of the surjection $\Gamma\to R(\psi)$. Such extensions are called sheaf twists, and the group of their isomorphism classes is denoted by $T_{R(\psi)}(\mathcal{S})$. Let $H^2(R(\psi),\mathcal{S})$ be the second equivariant sheaf cohomology group.  The long exact sequence of \cite[Theorem~3.7]{Kumjian2} yields a boundary map $\partial^1$ from $T_{R(\psi)}(\mathcal{S})$ to $H^2(R(\psi),\mathcal{S})$.
Finally, set   \[\delta(A)=(\pi^*_{h\circ\psi})^{-1}(\partial^{1}([\underline\Gamma]))\in H^2(\hat A, \mathcal{S})\]
where $\pi_{h\circ\psi}: R(h\circ\psi)\to\hat A$ is given by $(y,z)\mapsto h\circ\psi(y)$ \cite[Definition~7.9]{aHKS}. Quite a bit of the work in \cite[\S7]{aHKS} is to show that $\delta(A)$ is well-defined. 

Now  suppose that $\delta(A)=0$. Let $\Gamma$ be a twist associated to $A$. Then $\partial^{1}([\underline\Gamma])=0$. Let $\Lambda:=\T\times R(\psi)$ so that $\Lambda\to R(\psi)$ is the trivial twist.  Then the associated sheaf twist $\underline{\Lambda}$ is  also trivial, whence
$\partial^1([\underline{\Lambda}])=0$.  Now $0=\partial^{1}([\underline\Gamma])$ is sent to $0=\partial^{1}([\underline\Lambda])$ under a certain natural isomorphism (see \cite[Corollary~7.6]{aHKS}), and  \cite[Lemma~7.12]{aHKS} implies that  $\Gamma\to R(\psi)$ and $\Lambda\to R(\psi)$ are equivalent twists. Thus  $C^*(\Gamma;R(\psi))^{\Kum}$ and $C^*(\Lambda;R(\psi))^{\Kum}$ are Morita equivalent by \cite[Lemma~6.5]{aHKS}.  But now $A$ and $C^*(\Lambda;R(\psi))^{\Kum}$ are Morita equivalent. Since $\Lambda\to R(\psi)$ is trivial,  $C^*(\Lambda;R(\psi))^{\Kum}$ is isomorphic to $C^*_{\red}(R(\psi))$ by \cite[Lemma~A.1]{aHKS}. Thus $A$ and $C^*_{\red}(R(\psi))$ are Morita equivalent.

Conversely,  suppose that $A$ is Morita equivalent to $C^*_{\red}(R(\psi))$ for some $\psi$.  By Lemma~A.1 of \cite{aHKS}, $C^*_{\red}(R(\psi))$ is isomorphic to the $C^*$-algebra  of the trivial twist $\Lambda:=\T\times R(\psi)\to R(\psi)$.  The associated sheaf
twist $\underline{\Lambda}$ is  also trivial,  so
$\partial^1([\underline{\Lambda}])=0$, and $\delta(C^*_{\red}(R(\psi))) = 0$.  
Since $A$ and $C^*_{\red}(R(\psi))$ are Morita equivalent, by Theorem~7.13 of \cite{aHKS} there is a homeomorphism $k:\hat A\to C^*_{\red}(R(\psi))^\wedge$ such that the induced isomorphism $k^*:H^2(C^*_{\red}(R(\psi)^\wedge,\mathcal{S})\to H^2(\hat A,\mathcal{S})$ carries $0=\delta(C^*_{\red}(R(\psi)))$ to $\delta (A)$. Thus $\delta (A)=0$.
\end{proof}

\begin{prop}\label{prop delta is 0}
Let $A$ be a separable Fell algebra.  
\begin{enumerate}
\item\label{prop delta is 0-1} If $\delta(A)=0$ then $\deltaDD(I)=0$ for every ideal $I$ of $A$ with continuous trace.
\item\label{prop delta is 0-2} If $A$ has continuous trace and $\deltaDD(A)=0$, then $\delta(A)=0$.
\end{enumerate}
\end{prop}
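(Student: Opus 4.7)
The plan is to use Theorem~\ref{thm-true} as the bridge between $\delta$ and $\deltaDD$, together with the classical characterisation \cite[Theorem~5.29]{tfb}: a continuous-trace algebra $C$ satisfies $\deltaDD(C)=0$ if and only if $C$ is Morita equivalent to a commutative $C^*$-algebra.

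Part~\eqref{prop delta is 0-2} is almost immediate. If $A$ has continuous trace and $\deltaDD(A)=0$, then \cite[Theorem~5.29]{tfb} yields a Morita equivalence $A \sim C_0(\hat A)$. Since $A$ has continuous trace and is separable, $\hat A$ is second-countable, locally compact and Hausdorff. The identity map $\operatorname{id}:\hat A \to \hat A$ is trivially a local homeomorphism, and $C_0(\hat A) = C^*_{\red}(R(\operatorname{id}))$ because $R(\operatorname{id})$ is just the diagonal. Theorem~\ref{thm-true} therefore gives $\delta(A)=0$.

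For part~\eqref{prop delta is 0-1}, suppose $\delta(A)=0$ and let $I$ be an ideal of $A$ with continuous trace. By Theorem~\ref{thm-true}, there is a local homeomorphism $\psi:Y\to X$, with $Y$ second-countable, locally compact and Hausdorff, together with a Morita equivalence $A \sim B := C^*_{\red}(R(\psi))$. The Rieffel correspondence sends $I$ to a Morita-equivalent ideal $J$ of $B$, which inherits continuous trace by \cite[Corollary~3.5]{zettl2}. Ideals of $B$ correspond to open subsets of the spectrum $\hat B$, and by Corollary~\ref{thm standard groupoid} this spectrum is $Y/R(\psi)$; thus $J = C^*_{\red}(R(\psi)|_U)$ for some $R(\psi)$-invariant open subset $U \subseteq Y$. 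Invariance of $U$ gives $R(\psi)|_U = R(\psi|_U)$, where $\psi|_U: U \to \psi(U)$ is again a surjective local homeomorphism, and Lemma~\ref{X=orbitsp} identifies $\psi(U)$ with $U/R(\psi|_U) \cong \hat J$. Since $J$ has continuous trace, $\psi(U)$ is Hausdorff. Kumjian's theorem (as recalled after Corollary~\ref{thm standard groupoid}) then gives a Morita equivalence between $C^*_{\red}(R(\psi|_U))$ and $C_0(\psi(U))$, so $I$ itself is Morita equivalent to the commutative algebra $C_0(\psi(U))$, and \cite[Theorem~5.29]{tfb} forces $\deltaDD(I)=0$.

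I expect the main technical obstacle to be the structural description $J = C^*_{\red}(R(\psi|_U))$ for the invariant open set $U$ associated to $\hat I$, together with the fact that Kumjian's Hausdorff-base theorem in fact upgrades to a Morita equivalence with $C_0(\psi(U))$ (and not merely continuous trace). Both are standard but must be pinned down explicitly; once they are, the rest of the argument is simply transit via Theorem~\ref{thm-true} and \cite[Theorem~5.29]{tfb}. Note also that combining the two parts immediately produces the promised Corollary~\ref{cor:0iff0}.
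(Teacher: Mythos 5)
Your proposal is correct and follows the same overall strategy as the paper: part (\ref{prop delta is 0-2}) is argued identically (Morita equivalence with $C_0(\hat A)$, realised as $C^*(R(\operatorname{id}))$, then Theorem~\ref{thm-true}), and part (\ref{prop delta is 0-1}) likewise transits through Theorem~\ref{thm-true}, the Rieffel correspondence, and the identification of the ideal $J$ with the $C^*$-algebra of a restricted groupoid $R(\psi)|_U$. The one place you diverge is the final step: the paper does not re-realise $R(\psi)|_U$ as $R(\psi|_U)$ over the Hausdorff base $\hat J$; instead it quotes Muhly--Williams directly --- $C^*(R(\psi)|_U)$ has continuous trace and $R(\psi)|_U$ is principal, so the groupoid is proper by \cite[Theorem~2.3]{MW} and $\deltaDD=0$ by \cite[Proposition~2.2]{MW}. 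Your route, via the surjective local homeomorphism $\psi|_U:U\to\psi(U)\cong\hat J$ and the Morita equivalence of $C^*(R(\psi|_U))$ with $C_0(\psi(U))$, proves the same vanishing; it is slightly more self-contained in spirit (everything stays in the $R(\psi)$ framework of \S\ref{sec:C*Rpsi}) but, as you note, requires you to pin down that the continuous-trace statement from \cite{Kumjian} upgrades to a genuine Morita equivalence with $C_0(\psi(U))$ --- which is exactly the content of the Muhly--Williams results the paper cites, so nothing is gained in rigour. Two small points you should make explicit: (i) the identification of an arbitrary ideal of $C^*(R(\psi))$ with $C^*(R(\psi)|_U)$ for an invariant open $U$ is not automatic from ``ideals correspond to open subsets of the spectrum''; it uses that $R(\psi)$ is principal and $C^*(R(\psi))$ is liminary, via \cite[Proposition~6.1]{C}; and (ii) the Dixmier--Douady machinery of \cite[\S 5]{tfb} that you invoke is stated for paracompact spectra, so you need the observation (made in the paper's proof) that $\hat I$ is second-countable, locally compact, Hausdorff, hence $\sigma$-compact and paracompact.
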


\begin{proof}
 Let $B_1$ and $B_2$ be $C^*$-algebras with continuous trace and  paracompact spectrum $X$. 
Propositions~5.32 and 5.33 of \cite{tfb} together say that $\deltaDD(B_1)=\deltaDD(B_2)$ if and only if $B_1$ and $B_2$ are Morita equivalent.  
Below we consider an ideal $I$ in a separable Fell algebra $A$ such that $I$ has continuous trace.  Then  $\hat I$ is second-countable, locally compact and Hausdorff, and hence is $\sigma$-compact. By \cite[Proposition~1.7.11]{P}, for example, $\hat I$ is paracompact.  Thus Propositions~5.32 and 5.33 of \cite{tfb} apply to continuous-trace  $C^*$-algebras with spectrum homeomorphic to $\hat I$.

 \eqref{prop delta is 0-1}
Suppose  $\delta(A)=0$,  and let $I$ be an ideal of $A$ with continuous trace.   
By Theorem~\ref{thm-true}, $A$ is Morita equivalent to $C^*_{\red}(R(\psi))$ for  some local homeomorphism $\psi:Y\to X$. Then $C^*_{\red}(R(\psi))$  is also a Fell algebra, and $I$ is Morita equivalent to an ideal $J$ of $C^*_{\red}(R(\psi))$ with continuous trace.  By Corollary~\ref{thm standard groupoid},  $C^*_{\red}(R(\psi))=C^*(R(\psi))$.  Since $R(\psi)$ is principal and $C^*(R(\psi))$ is liminary, by \cite[Proposition~6.1]{C} there exists an open invariant subset $U$ of the unit space $Y$ of $R(\psi)$ such that $J$ is isomorphic to the $C^*$-algebra of $R(\psi)|_U:=\{\gamma\in R(\psi):r(\gamma),s(\gamma)\in U\}$.  Since $R(\psi)|_U$ is principal and $C^*(R(\psi)|_U)$ has continuous trace, $R(\psi)|_U$ is a proper groupoid by \cite[Theorem~2.3]{MW}, and $\deltaDD(C^*(R(\psi)|_U))=0$ by \cite[Proposition~2.2]{MW}.  Since $C^*(R(\psi)|_U)$ and $I$  are Morita equivalent, $\deltaDD(I)=0$  by \cite[Proposition~5.32]{tfb}.

\eqref{prop delta is 0-2} Suppose $\deltaDD(A)=0$. Then $\deltaDD(A)=\delta_{DD}(C_0(\hat A))$, and $A$ and $C_0(\hat A)$ are Morita equivalent by \cite[Proposition~5.33]{tfb}. But $C_0(\hat A)$ is isomorphic to $C^*(R(\psi))=C^*_{\red}(R(\psi))$ where $\psi:\hat A\to \hat A$ is the identity.  Thus $\delta(A)=0$ by Theorem~\ref{thm-true}.
\end{proof}

The following corollary is immediate from Proposition~\ref{prop delta is 0}.

\begin{cor}\label{cor:0iff0} Suppose $A$ is a separable $C^*$-algebra with continuous trace.  Then $\delta(A)=0$ if and only if $\deltaDD(A)=0$.
\end{cor}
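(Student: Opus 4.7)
The plan is to derive this corollary directly from the two parts of Proposition~\ref{prop delta is 0}, since each implication in the biconditional matches one part of that proposition applied to $A$ itself. Because $A$ has continuous trace, both $\delta(A)$ and $\deltaDD(A)$ are defined, so the statement makes sense.

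For the forward direction, I would assume $\delta(A)=0$ and observe that $A$ is itself an ideal of $A$ with continuous trace. Applying Proposition~\ref{prop delta is 0}\eqref{prop delta is 0-1} with $I=A$ then yields $\deltaDD(A)=0$ immediately. This uses the hypothesis that $A$ is separable (so that Theorem~\ref{thm-true} applies to produce a Morita equivalence of $A$ with $C^*_\red(R(\psi))$) together with the fact that continuous-trace algebras have paracompact spectra, which is what makes $\deltaDD$ a complete Morita invariant.

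For the reverse direction, I would assume $\deltaDD(A)=0$ and simply invoke Proposition~\ref{prop delta is 0}\eqref{prop delta is 0-2} directly to conclude $\delta(A)=0$. The argument there compares $A$ to $C_0(\hat A)$, which is isomorphic to $C^*(R(\mathrm{id}))$ for the identity map on $\hat A$, and hence has $\delta=0$ by Theorem~\ref{thm-true}; Morita invariance of $\delta$ (via Theorem~7.13 of \cite{aHKS}, already used in the proof of Theorem~\ref{thm-true}) does the rest.

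There is no genuine obstacle here since all the substantive work is absorbed into Proposition~\ref{prop delta is 0}: the corollary is just the specialisation $I=A$ of part \eqref{prop delta is 0-1} paired with part \eqref{prop delta is 0-2}. The only thing worth checking is that the hypotheses of both parts are satisfied with $I=A$, which is automatic once we note that a separable continuous-trace algebra is a separable Fell algebra.
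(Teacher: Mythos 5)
Your proof is correct and is exactly the paper's argument: the paper states that the corollary is immediate from Proposition~\ref{prop delta is 0}, obtained by taking $I=A$ in part \eqref{prop delta is 0-1} for one direction and applying part \eqref{prop delta is 0-2} for the other. Your added check that a separable continuous-trace algebra is a separable Fell algebra (so both parts apply) is the only detail the paper leaves implicit, and you have it right.
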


%%%%%%%%%%%%%%%%%%%%%%%%%%%%%%%%%%%%%%%%%%%%%%%%%%%%%%%%%%%%%%%%%%%%%%%%%%%%%%%%%%%%%%%%%%%%%%%%
\section{Examples}\label{examples}

A standard example of a Fell algebra which does not have continuous trace is the algebra
\[
A_3=\{f\in C([0,1],M_2(\C)): f(1)\text{ is diagonal}\}
\]
discussed in \cite[Example~A.25]{tfb}. Here we describe two variations on this construction. It seems clear to us that our constructions could be made much more general, for example by doubling up along topologically nontrivial subspaces rather than at a single point. 

%%%%%%%%%%%%%%%%%%%%%%%%%%%%%%%%%%%%%%%%%%%%%%%%%%%%%%%%%%%%%%%%%%%%%%%%%%%%%%%%%%%%%%%%%%
\subsection{A Fell algebra with trivial Dixmier-Douady invariant}\label{sec-example}
We write $\{ \xi_i\}_{i\in\N}$ for the usual orthonormal basis in $\ell^2(\N)$ and $\Theta_{ij}$ for the rank-one operator $\Theta_{ij}:h\mapsto (h\,|\,  \xi_j) \xi_i$ on $\ell^2(\N)$. We write $\calK=\calK(\ell^2(\N))$, and  
\begin{equation}\label{defA}
A:=\{a\in C([0,1],\calK): a(1)\text{\ is diagonal in the sense that $(a(1) \xi_i\,|\,  \xi_j)=0$ for $i\not=j$}\}.
\end{equation}
We write $e_{ij}$ for the constant function $t\mapsto \Theta_{ij}$ in $C([0,1],\calK)$. 

We let $Y$ be the disjoint union $\bigsqcup_{i\in \N}[0,1]=\bigcup_{i\in \N}[0,1]\times\{i\}$. Then $Y$ is a locally compact Hausdorff space with the topology in which each $[0,1]\times\{i\}$ is open, closed and homeomorphic to $[0,1]$. There is an equivalence relation $\sim$ on $Y$ such that $(s,i)\sim (s,j)$ for all $i,j\in \N$ and $s\in [0,1)$, and the $(1,i)$ are equivalent only to themselves. 

We claim that the quotient map $\psi:Y\to X:=Y/\!\!\sim$ is open. 
To see this, it suffices to take an open set $U=W\times\{i\}$ contained in one level $[0,1]\times\{i\}$,  and see that $\psi(U)$ is open. By definition of the quotient topology, we have to show that $\psi^{-1}(\psi(U))$ is open. If $(1,i)$ is not in $U$, then  
\[
\psi^{-1}(\psi(U))=\{(s,j):(s,i)\in U,\;j\in \N\}=\bigcup_{j\in \N}W\times\{j\},
\] 
which is open. If $(1,i)\in U$, then
\[
\psi^{-1}(\psi(U))=(W\times\{i\})\cup\Big(\bigcup_{j\not=i}((W\setminus\{1\})\times\{j\})\Big),
\]
which is open. Thus $\psi$ is open, as claimed. Since $[0,1]\times\{i\}$ is open and $\psi|_{[0,1]\times\{i\}}$ is injective, $\psi$ is a surjective local homeomorphism.

We now consider $R(\psi)$, and write $V_{ij}$ for the subset $\big(([0,1]\times\{i\})\times([0,1]\times\{j\})\big)\cap R(\psi)$. Then for $i\not=j$, the map $\psi_{ij}:((s,i),(s,j))\mapsto s$ is a homeomorphism of $V_{ij}$ onto $[0,1)$; for $i=j$, the similarly defined $\psi_{ii}$ is a homeomorphism of $V_{ii}$ onto $[0,1]$. Thus for each $f\in C_c(R(\psi))$, the compact set $\supp f$ meets only finitely many $V_{ij}$. Define $f_{ij}(s)=f((s,i),(s,j))$. Then $f_{ij}\in C_c([0,1))$ (for $i\not=j$) and $f_{ii}\in C([0,1])$, and $f$ can be recovered as a finite  sum $\sum_{\{(i,j):\supp f\cap V_{ij}\not=\emptyset\}}(f_{ij}\circ\psi_{ij})\chi_{V_{ij}}$. By viewing functions in $C_c([0,1))$ as functions on $[0,1]$ which vanish at $1$, we can define $\rho:C_c(R(\psi))\to A$ by 
\begin{equation*}\label{defrho}
\rho(f)=\sum_{\{(i,j):\supp f\cap V_{ij}\not=\emptyset\}}f_{ij}e_{ij}.
\end{equation*}

\begin{prop}\label{idA}
The function $\rho:C_c(R(\psi))\to A$ extends to an isomorphism of $C^*(R(\psi))$ onto $A$.
\end{prop}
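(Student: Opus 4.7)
The plan is to verify that $\rho$ is a $*$-homomorphism on $C_c(R(\psi))$, extend it by continuity to a $*$-homomorphism $\tilde\rho:C^*(R(\psi))\to A$, and then establish surjectivity by a density argument and injectivity via the induced representations. The key structural observation is that $R(\psi)=\bigsqcup_{i,j\in\N}V_{ij}$ is a disjoint union of clopen subsets, since each $V_{ij}=(([0,1]\times\{i\})\times([0,1]\times\{j\}))\cap R(\psi)$ is open in $R(\psi)$ and its closure in $Y\times Y$ contains the extra point $((1,i),(1,j))$, which lies in $R(\psi)$ precisely when $i=j$. Consequently any $f\in C_c(R(\psi))$ meets only finitely many $V_{ij}$, and conversely any choice of $f_{ij}\in C_c([0,1))$ (or $C([0,1])$ when $i=j$) lifts back to $C_c(R(\psi))$ by supporting a scalar function on the corresponding clopen piece and transporting it via $\psi_{ij}$.

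For the $*$-homomorphism check I will use the \'etale convolution formula: factorising $\alpha=((s,i),(s,j))\in V_{ij}$ as $\beta\gamma$ forces $\beta=((s,i),(s,k))$ and $\gamma=((s,k),(s,j))$, so that $(f*g)_{ij}(s)=\sum_k f_{ik}(s)g_{kj}(s)$, which is the $(i,j)$-entry of $\rho(f)(s)\rho(g)(s)$; the involution check $\rho(f^*)_{ij}(s)=\overline{f_{ji}(s)}=(\rho(f)(s)^*)_{ij}$ is analogous. At $s=1$ the diagonal constraint in $A$ is automatic because $V_{ij}$ has no points over $1$ for $i\neq j$. Since $A$ is a $C^*$-algebra, composing $\rho$ with any faithful representation of $A$ yields a $*$-representation of $C_c(R(\psi))$ on Hilbert space that is automatically bounded by the universal norm, so $\rho$ extends to $\tilde\rho:C^*(R(\psi))\to A$. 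Surjectivity of $\tilde\rho$ will follow by approximating any $a\in A$ in uniform operator norm by truncations $\sum_{i,j\leq N}(a(\cdot)\xi_j\,|\,\xi_i)e_{ij}$, uniformly approximating each off-diagonal coefficient (which lies in $C_0([0,1))$ because $a(1)$ is diagonal) by elements of $C_c([0,1))$, and then lifting through the clopen decomposition.

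For injectivity, I will compute $\Ind_u$ for each $u\in Y$ and show it factors through $\tilde\rho$. When $u=(s,i)$ with $s<1$, the fibre $s^{-1}(u)=\{((s,j),(s,i)):j\in\N\}$ identifies $L^2(s^{-1}(u),\lambda_u)$ with $\ell^2(\N)$, and the \'etale induced-representation formula makes $\Ind_u(f)$ act as the matrix $(f_{jk}(s))$, which is exactly $\rho(f)(s)=\mathrm{ev}_s\circ\tilde\rho(f)$; when $u=(1,i)$ the fibre is the singleton $\{((1,i),(1,i))\}$ and $\Ind_u(f)$ is multiplication by the scalar $f_{ii}(1)$, the $i$th diagonal entry of $\rho(f)(1)$. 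Since Corollary~\ref{thm standard groupoid} gives $C^*(R(\psi))=C^*_{\red}(R(\psi))$, the norm on $C^*(R(\psi))$ coincides with $\sup_{u\in Y}\|\Ind_u(\cdot)\|$, so any element of $\ker\tilde\rho$ must vanish under every $\Ind_u$ and is therefore zero. The subtlest step is organising the identifications $L^2(s^{-1}(u),\lambda_u)\cong\ell^2(\N)$ consistently across $u\in Y$ and matching them to evaluations of $A$; the remaining bookkeeping with compact supports and the clopen decomposition is then straightforward.
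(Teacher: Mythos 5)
Your proof is correct and follows essentially the same route as the paper's: the matrix-unit computation $(f*g)_{ij}(s)=\sum_k f_{ik}(s)g_{kj}(s)$, the unitary equivalence of $\Ind_{(s,i)}$ with $\epsilon_s\circ\rho$, and the truncation/density argument for surjectivity all match the paper's argument. The only organisational difference is that the paper establishes isometry directly on $C_c(R(\psi))$ (restricting the supremum defining the reduced norm to $s\in[0,1)$ by density) rather than first extending $\rho$ via the universal norm and then proving injectivity through the family $\{\Ind_u\}$; your appeal to automatic boundedness is legitimate here because $R(\psi)$ is \'etale, so every element of $C_c(R(\psi))$ is a finite sum of functions supported on bisections and any $*$-representation is $I$-norm bounded, but this deserves a sentence of justification.
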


First we check that $\rho$ is a homomorphism on the convolution algebra $C_c(R(\psi))$. Let $f,g\in C_c(R(\psi))$, and observe that for each $s$,
\begin{equation}\label{compprod}
(f*g)((s,i),(s,j))=\sum_{k} f((s,i),(s,k))g((s,k),(s,j))=\sum_k f_{ik}(s)g_{kj}(s)
\end{equation}
has only finitely many nonzero terms (and just one if $s=1$ and $i=j$). Since the $e_{ij}$ are matrix units in $C([0,1],\calK)$, and the operations in $A$ are those of $C([0,1],\calK)$, Equation~\ref{compprod} implies that $\rho$ is multiplicative. Thus $\rho$ is a $*$-homomorphism. 

To see that $\rho$ extends to $C^*(R(\psi))$, we show that $\rho$ is isometric for the reduced norm on $C_c(R(\psi))$, which by Corollary~\ref{thm standard groupoid} is the same as the enveloping norm. The norm in $C([0,1],\calK)$ satisfies $\|a\|=\sup\{\|a(t)\|:t\in [0,1)\}$, and since $[0,1)$ is also dense in $X$, the reduced norm on $C_c(R(\psi))$ satisfies
\[
\|f\|=\sup\{\|\Ind_{(s,i)}(f)\|:s\in [0,1)\}.
\]
Thus the following lemma implies that $\rho$ is isometric.

\begin{lemma}\label{lemequiv}
For $s\in [0,1)$, we define $\epsilon_s:A\to B(\ell^2(\N))$ by $\epsilon_s(f)=f(s)$. Then the representation $\epsilon_s\circ\rho$ of $C_c(R(\psi))$ is unitarily equivalent to $\Ind_{(s,i)}$. 
\end{lemma}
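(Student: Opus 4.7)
The plan is as follows. First, I would identify the representation space of $\Ind_{(s,i)}$. Since $R(\psi)$ is \'etale (by Proposition~\ref{prop:r(psi)_props} and Lemma~\ref{lem:qtnt map}), the measure $\lambda_{(s,i)}$ is counting measure on the discrete source-fibre at the unit $(s,i)$; and because $s<1$, every $(s,k)$ is equivalent to $(s,i)$, so this fibre is exactly the countable set $\{((s,k),(s,i)): k\in\N\}$. I would then define a unitary $U$ from this $L^2$-space onto $\ell^2(\N)$ by sending the point-mass at $((s,k),(s,i))$ to $\xi_k$.

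Second, I would verify directly that $U$ intertwines $\Ind_{(s,i)}(f)$ and $\epsilon_s\circ\rho(f)$ by plugging into the \'etale form of the defining formula for $\Ind_{(s,i)}$ displayed at the end of \S\ref{sec-back}. For $\alpha=((s,k),(s,i))$ one has $r(\alpha)=(s,k)$ and $r^{-1}((s,k))=\{((s,k),(s,l)):l\in\N\}$, and for $\beta=((s,k),(s,l))$ the product $\beta^{-1}\alpha$ equals $((s,l),(s,i))$, so the formula collapses to
\[
(\Ind_{(s,i)}(f)\eta)((s,k),(s,i))=\sum_{l}f_{kl}(s)\,\eta((s,l),(s,i)).
\]
On the other hand, $\epsilon_s\circ\rho(f)=\sum f_{ij}(s)\Theta_{ij}$, and since $\Theta_{ij}\xi_l=\delta_{jl}\xi_i$ this operator has matrix $[f_{kl}(s)]$ in the basis $\{\xi_k\}$. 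Reading off the $k$-th coordinate of $(\epsilon_s\circ\rho(f))U\eta$ therefore reproduces the right-hand side of the display, giving $U\,\Ind_{(s,i)}(f)=(\epsilon_s\circ\rho(f))\,U$ on the dense subspace of finitely supported functions, and hence on all of $L^2$.

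Third, the main point needing care will be that the ``infinite'' sums here are actually finite: because $\supp f$ is compact and meets only finitely many of the clopen pieces $V_{kl}$ (as noted immediately before the definition of $\rho$), only finitely many $f_{kl}$ are nonzero, so $\rho(f)$ is a genuine finite linear combination of matrix units $e_{ij}$ and the sum in the $\Ind$-formula terminates. No real obstacle arises; the argument is essentially bookkeeping around the \'etale formula and the definition of $\rho$. The choice of basepoint label $i$ plays no role, since all $(s,j)$ with $j\in\N$ lie in a single orbit and the matrix computation is the same for each.
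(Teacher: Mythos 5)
Your proof is correct and takes essentially the same route as the paper's: the same unitary (sending the point mass at $((s,k),(s,i))$ to $\xi_k$) and the same coordinate computation for the intertwining relation, the only cosmetic difference being that the paper verifies the identity on point masses while you verify it on general vectors. Your closing observation that all sums are finite because $\supp f$ meets only finitely many of the sets $V_{kl}$ is correct and consistent with the setup preceding the definition of $\rho$.
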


\begin{proof}
For each $i\in \N$ we have $s^{-1}((s,i))=\{((s,j),(s,i)):j\in \N\}$, so there is a unitary isomorphism $U_s$ of $\ell^2(s^{-1}((s,i)))$ onto $\ell^2(\N)$ which carries the point mass $e_{((s,j),(s,i))}$ into the basis vector $\xi_j$. We will prove that $U_s$ intertwines $\epsilon_s\circ\rho$ and $\Ind_{(s,i)}$. Let $f\in C_c(R(\psi))$. On one hand, we have
\[
\epsilon_s\circ\rho(f)(U_se_{((s,j),(s,i))})=\sum_{k,l} f_{kl}(s)\Theta_{kl}\xi_j=\sum_{k} f_{kj}(s)\xi_k.
\]
On the other hand, the induced representation satisfies
\begin{align*}
\big(\Ind_{(s,i)}(f)e_{((s,j),(s,i))}\big)((s,k),(s,i))&=\sum_l f((s,k),(s,l))e_{((s,j),(s,i))}((s,l),(s,i))\\
&=f((s,k),(s,j))=f_{kj}(s), 
\end{align*}
and hence
\[
U_s\big(\Ind_{(s,i)}(f)e_{((s,j),(s,i))}\big)=U_s\Big(\sum_k f_{kj}(s)e_{((s,k),(s,i))}\Big) =\sum_k f_{kj}(s)\xi_k.\qedhere
\]
\end{proof}

It remains for us to see that $\rho$ is surjective.
Since $\rho(C^*(R(\psi)))$ is a $C^*$-algebra, it suffices to show that $\rho(C_c(R(\psi)))$ is dense in $A$. Indeed, because the $q_N=\sum_{i=1}^Ne_{ii}$ form an approximate identity for $A$, it suffices to take $b\in q_NAq_N$ and show that we can approximate $b$ by some $\rho(f)$. Fix $\epsilon>0$. We can write $b=\sum_{i,j\leq N}b_{ij}e_{ij}$ with $b_{ij}\in C([0,1])$ and $b_{ij}(1)=0$ for $i\not=j$. Set $c:=b-\sum_{i=0}^N b_{ii}e_{ii}$, and observe that $c(1)=0$.  Choose $\delta\in(0,1)$ such that $\sup_{t\in[\delta,1)}\|c(t)\|<\epsilon$, and choose a continuous function $h:[0,1]\to [0,1]$ such that $h=1$ on $[0,\delta]$ and $h=0$ near $1$.  Then $\|c-hc\|_\infty<\epsilon$.   Set $d:=\sum_{i=0}^N b_{ii}e_{ii}+hc$, and then $d$ has the form $\sum_{i=0}^N d_{ij}e_{ij}$,
where $d_{ij}\in C_c([0,1))$ if $i\neq j$ and $d_{ii}\in C([0,1])$. Set $f=\sum_{i=0}^N d_{ij}\chi_{V_{ij}}$. Then  $f\in C_c(R(\psi))$, $\rho(f)=d$,  and
\[\|b-\rho(f)\|_\infty=\Big\|b-\sum_{i=1}^n b_{ii}e_{ii}-hc\Big\|_\infty=\|c-hc\|_\infty<\epsilon.\] 

This completes the proof of Proposition~\ref{idA}. 

\begin{cor}
The $C^*$-algebra $A$ in \eqref{defA} is a Fell algebra which does not have continuous trace, and the Dixmier-Douady invariant of $A$ is $0$.
\end{cor}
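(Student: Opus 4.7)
The plan is to deduce each of the three assertions directly from results already proved in the paper, using the identification $A\cong C^*(R(\psi))$ from Proposition~\ref{idA} as the bridge.

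First, to see that $A$ is a Fell algebra, I would invoke Corollary~\ref{thm standard groupoid}: the map $\psi:Y\to X$ constructed above is a surjective local homeomorphism from the second-countable, locally compact Hausdorff space $Y=\bigsqcup_{i\in\N}[0,1]\times\{i\}$ to $X=Y/\!\!\sim$, so $C^*(R(\psi))$ is Fell, and hence so is $A$ by Proposition~\ref{idA}.

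Second, to see that $A$ does not have continuous trace, I would use the Dixmier characterisation recalled in the introduction: a Fell algebra has continuous trace if and only if its spectrum is Hausdorff. By Corollary~\ref{thm standard groupoid} together with Lemma~\ref{X=orbitsp}, the spectrum of $A$ is homeomorphic to $X$, so it suffices to show that $X$ is not Hausdorff. For $i\ne j$ the points $\psi(1,i)$ and $\psi(1,j)$ are distinct in $X$, but any open neighbourhoods $U$ of $\psi(1,i)$ and $V$ of $\psi(1,j)$ must pull back under $\psi$ to open sets containing $(1,i)$ and $(1,j)$ respectively, hence containing points $(s,i)$ and $(s,j)$ for all $s$ sufficiently close to $1$; since $(s,i)\sim(s,j)$ for $s<1$, the sets $U$ and $V$ meet. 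Thus $X$ is not Hausdorff, and $A$ does not have continuous trace.

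Third, for the vanishing of the Dixmier-Douady invariant, I would apply Theorem~\ref{thm-true}: since $C^*(R(\psi))=C^*_{\red}(R(\psi))$ by Corollary~\ref{thm standard groupoid}, Proposition~\ref{idA} identifies $A$ with $C^*_{\red}(R(\psi))$ for the local homeomorphism $\psi$, so the ``if'' direction of Theorem~\ref{thm-true} yields $\delta(A)=0$. No step is really an obstacle here; the only point requiring a moment's care is the non-Hausdorff argument for the spectrum, and that reduces to the observation that in $X$ the distinct ``doubled'' endpoints $\psi(1,i)$ cannot be separated because each of their neighbourhoods inherits the common tail $\{(s,k):s<1,\;k\in\N\}/\!\!\sim$.
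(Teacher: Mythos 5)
Your proposal is correct and follows essentially the same route as the paper: identify $A$ with $C^*(R(\psi))$ via Proposition~\ref{idA}, apply Corollary~\ref{thm standard groupoid} for the Fell property and the identification of the spectrum with $X$, use Theorem~\ref{thm-true} for the vanishing of the invariant, and conclude that continuous trace fails because $X$ is not Hausdorff. The only difference is that you spell out the non-Hausdorffness of $X$ (which the paper simply asserts), and your argument for that is correct.
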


\begin{proof}  By Proposition~\ref{idA}, $A$ is isomorphic to $C^*(R(\psi))$ where $\psi$ is a surjective local homeomorphism. Now Corollary~\ref{thm standard groupoid} implies that $A$ is a Fell algebra with spectrum $X$, and Theorem~\ref{thm-true} implies that its Dixmier-Douady invariant vanishes. Because $X$ is not Hausdorff, $A$ does not have continuous trace.
\end{proof}

\begin{rmk}
Paracompactness is usually defined only for Hausdorff spaces, and the example of this section confirms that things can go badly wrong for the sorts of non-Hausdorff spaces of interest to us. For the spectrum $X$ of our algebra $A$, the sets $\psi([0,1]\times\{i\})$ form an open cover of $X$, but every neighbourhood of the point $\psi(1,1)$, for example, meets every neighbourhood of every other $\psi(1,i)$, so there cannot be a locally finite refinement.
\end{rmk}

\subsection{A Fell algebra with non-trivial Dixmier-Douady invariant}

We describe a Fell algebra $A$ which does not have continuous trace, and  has Dixmier-Douady class $\delta(A)\not=0$. We do this by combining the construction of \cite[\S1]{RT} (see also \cite[Example~5.23]{tfb}) with that of the algebra $A_3$ at the start of \S\ref{examples}. We adopt the notation of \cite[Chapter~5]{tfb}.

We start with a compact Hausdorff space $S$,  a finite open cover  $\mathcal{U}=\{U_1,\ldots,U_n\}$ of $S$, and  an alternating cocycle $\lambda_{ijk}:U_{ijk}\to \T$ whose class $[\lambda_{ijk}]$ in $H^2(S,\mathcal{S})$ is nonzero. By the argument of \cite[Lemma~3.4]{RW}, for example, we may multiply $\lambda$ by a coboundary and assume  that $\lambda_{ijk}\equiv 1$ whenever two of $i,j,k$ coincide.

The algebra $A(\mathcal{U},\lambda_{ijk})$ in \cite[Example~5.23]{tfb} has underlying vector space
\[
A(\mathcal{U},\lambda_{ijk})=\{(f_{ij})\in M_n(C(S)): f_{ij}=0\text{ on }S\setminus U_{ij}\}.
\]
The product in $A(\mathcal{U},\lambda_{ijk})$ is defined by $(f_{ij})(g_{kl})=(h_{il})$, where
\begin{equation}\label{RTprod}
h_{il}(s)=\begin{cases}
\sum_{\{j:s\in U_{ijl}\}}\overline{\lambda_{ijl}(s)}f_{ij}(s)g_{jl}(s)&\text{if  $s\in U_{ijl}$ for some $j$}\\
0&\text{otherwise,}
\end{cases}
\end{equation}
and the involution given by $(f_{ij})^*=(\overline{f_{ji}})$. For $s\in U$, we take $I_s:=\{i:s\in U_i\}$, and define $\pi_{i,s}:A(\mathcal{U},\lambda_{ijk})\to M_{I_s}$ by
\begin{equation}\label{RTrep}
\pi_{i,s}\big(\,(f_{jk})\,\big)=\big(\overline{\lambda_{ijk}(s)}f_{jk}(s)\big).
\end{equation}
It is shown in \cite[Example~5.23]{tfb} that $A(\mathcal{U},\lambda_{ijk})$ is a $C^*$-algebra with
\[
\big\|(f_{jk})\big\|=\sup_{i,s}\big\|\pi_{i,s}\big(\,(f_{jk})\,\big)\big\|.
\]
In fact, and we shall need this later, $A(\mathcal{U},\lambda_{ijk})$ is a continuous-trace algebra with spectrum $S$ and Dixmier-Douady class $\deltaDD(A(\mathcal{U},\lambda_{ijk}))=[\lambda_{ijk}]$ (see \cite[Proposition~5.40]{tfb}, which simplifies in our case because $S$ is compact and the cover is finite).

For our new construction, we fix a point $*$ in $U_1$, and suppose that $*$ is not in any other $U_i$ (we can ensure this is the case by replacing $U_i$ with $U_i\setminus \{*\}$). We add a copy $U_0$ of $U_1$ to our cover, and set
\[
Y:=\bigsqcup_{i=0}^n U_i=\bigcup_{i=0}^n\big\{(s,i):0\leq i\leq n\text{ and }s\in U_i\big\}.
\]
We define a relation $\sim$ on $Y$ by $(s,i)\sim(s,j)$ if  $s\not=*$, $(*,1)\sim(*,1)$, and $(*,0)\sim(*,0)$. This is an equivalence relation, and we define $X$ to be the quotient space and $\psi:Y\to X$ to be the quotient map. Thus $X$ consists of a copy of $S\setminus\{*\}$ with the subspace topology, and two closed points $\psi(*,0)$, $\psi(*,1)$ whose open neighbourhoods are the images under $\psi$ of open sets $U\times\{0\}$ and $U\times\{1\}$, respectively. 

\begin{lemma}
The function $\psi:Y\to X$ is a surjective local homeomorphism.
\end{lemma}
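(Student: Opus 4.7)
The plan is to verify three properties of $\psi$: continuity (which is automatic from the definition of the quotient topology on $X$), local injectivity (specifically, injectivity of the restriction of $\psi$ to each summand $U_i\times\{i\}$), and openness. Taken together with the fact that each $U_i\times\{i\}$ is open in $Y$, these imply that $\psi|_{U_i\times\{i\}}$ is a homeomorphism onto an open subset of $X$, which is exactly what is needed. Surjectivity is immediate, since every equivalence class has a representative in $Y$.

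Local injectivity is straightforward from the definition of the relation $\sim$: if $(s,i)\sim(t,i)$ then either we used the clause $s=t$, or we used one of the two self-equivalences $(*,0)\sim(*,0)$, $(*,1)\sim(*,1)$; in either case $s=t$. So $\psi$ is injective on each summand.

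The substantive content is openness. It suffices to check that for every open set of the form $W\times\{i\}$ with $W$ open in $U_i$, the preimage $\psi^{-1}(\psi(W\times\{i\}))$ is open in $Y$, since these sets form a basis for the topology on $Y$. I will split into cases. When $*\notin W$ (automatic when $i\geq 2$, since $*\notin U_i$ by the choice of cover), the preimage is just
\[
\psi^{-1}(\psi(W\times\{i\}))=\bigcup_{j=0}^{n}(W\cap U_j)\times\{j\},
\]
which is open as a union of open sets. When $*\in W$ (which can happen only for $i\in\{0,1\}$), the doubling construction forces $(*,i)$ to be equivalent only to itself, so the only point of the fibre over $*$ that enters the preimage is $(*,i)$, giving
\[
\psi^{-1}(\psi(W\times\{i\}))=\big(W\times\{i\}\big)\cup\bigcup_{j\neq i}\big((W\setminus\{*\})\cap U_j\big)\times\{j\}.
\]
Each piece is open because $\{*\}$ is closed in the Hausdorff space $S$ and hence closed in every $U_j$.

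The main (mild) obstacle is the bookkeeping around the indices $0$ and $1$: the whole point of introducing $U_0$ as a second copy of $U_1$ is that $*$ lies in both $U_0$ and $U_1$ but $(*,0)\not\sim(*,1)$, so the ``$*\in W$'' case genuinely behaves differently from the ``$*\notin W$'' case. Once that is tracked carefully, the argument proceeds exactly as in the analogous openness computation of \S\ref{sec-example} for the earlier quotient map, and the lemma follows.
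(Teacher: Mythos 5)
Your proof is correct and follows essentially the same route as the paper's: surjectivity and continuity from the quotient-map definition, injectivity on each summand $U_i\times\{i\}$, and openness via the same case analysis on whether $*\in W$, with the same explicit formulas for $\psi^{-1}(\psi(W\times\{i\}))$. The only (harmless) additions are your explicit remark that $W\setminus\{*\}$ is open because $S$ is Hausdorff, which the paper leaves implicit.
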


\begin{proof}
Quotient maps are always continuous and surjective, and $\psi$ is injective on each $U_i\times\{i\}$. So it suffices to see that $\psi$ is open, and for this, it suffices to see that for each open set $W$ in $U_i$, $\psi(W\times\{i\})$ is open in $X$. By definition of the quotient topology, we need to show that $\psi^{-1}(\psi(W\times\{i\}))$ is open in $Y$. If $*$ is not in $W$, then
\[
\psi^{-1}(\psi(W\times\{i\}))=\bigcup_{\{j:U_j\cap W\not=\emptyset\}}(W\cap U_j)\times\{j\}.
\]
If $*\in W$ and $i=0$, then 
\[
\psi^{-1}(\psi(W\times\{0\}))=\{W\times\{0\}\}\cup\Big(\bigcup_{\{j:U_j\cap W\not=\emptyset\}}((W\setminus\{*\})\cap U_j)\times\{j\}\Big)
\]
is open, and similarly for $*\in W$ and $i=1$. So $\psi^{-1}(\psi(W\times\{i\}))$ is always open, as required.
\end{proof}

 We extend $\lambda$ to an alternating cocyle on the cover $\{U_0,U_1,\cdots,U_n\}$ by setting $\lambda_{0jk}=\lambda_{1jk}$. Then the formula
\[
\sigma\big(((s,i)(s,j)),((s,j)(s,k)) \big)=\overline{\lambda_{ijk}(s)}
\]
defines a continuous $2$-cocycle $\sigma$ on $R(\psi)$. Since $\psi$ is a local homeomorphism it follows from Proposition~\ref{prop:r(psi)_props} that $R(\psi):=\{(y,z)\in Y\times Y:\psi(y)=\psi(z)\}$ is a locally compact, Hausdorff and \'etale groupoid, and that 
$C^*(R(\psi),\sigma)=C^*_{\red}(R(\psi),\sigma)$ is a Fell algebra with spectrum homeomorphic to $X$.

The $*$-algebra structure on $C_c(R(\psi),\sigma)$ is given by
\begin{align}
f^*((s,i),(s,j))&=\overline{f((s,j),(s,i))\lambda_{iji}(s)}=\overline{f((s,j),(s,i))}\notag\\
(f*g)((s,i),(s,j))&=\sum_{\{k:s\in U_k\}}f((s,i),(s,k))g((s,k),(s,j))\overline{\lambda_{ikj}(s)},
\label{gpoidprod}
\end{align}
and if $(s,i)$ is a unit in $R(\psi)$ then the induced representation $\Ind^\sigma_{(s,i)}$ acts in $\ell^2(s^{-1}((s,i)))=\ell^2(\{(s,j):s\in U_{ij}\})$ according to the formula 
\begin{equation}\label{defind}
(\Ind^\sigma_{(s,i)}(f)\xi)(s,j)=\sum_{\{k:s\in U_k\}}f((s,j),(s,k))\xi(s,k)\overline{\lambda_{jki}(s)}.
\end{equation}

\begin{lemma}\label{pi0}
There is a homomorphism $\pi_0:C^*(R(\psi),\sigma)\to \C$ such that $\pi_0(f)=f((*,0),(*,0))$ for $f\in C_c(R(\psi),\sigma)$.
\end{lemma}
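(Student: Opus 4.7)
The plan is to take $\pi_0$ to be the induced representation $\Ind^\sigma_{(*,0)}$, after identifying the one-dimensional Hilbert space on which it acts with $\C$. The main task is just to unpack the definition of $\Ind^\sigma_{(*,0)}$ at this particular unit.

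First I would observe that under the equivalence relation $\sim$ defining $\psi$, the point $(*,0)$ is equivalent only to itself; indeed, among the indices $i\in\{0,1,\dots,n\}$, only $i=0$ and $i=1$ satisfy $*\in U_i$, and by construction $(*,0)$ and $(*,1)$ lie in distinct singleton classes. Hence $s^{-1}((*,0))=\{((*,0),(*,0))\}$, and $\ell^2(s^{-1}((*,0)),\lambda_{(*,0)})$ is one-dimensional. Fix the unit vector $\xi_0$ given by the point mass at $((*,0),(*,0))$, which identifies this space with $\C$.

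Next I would compute $\Ind^\sigma_{(*,0)}(f)$ for $f\in C_c(R(\psi),\sigma)$ using \eqref{defind}. In the sum over $\{k:*\in U_k\}$, the summand contributes only when $((*,0),(*,k))\in R(\psi)$ and $\xi_0((*,k),(*,0))\neq 0$; both force $(*,k)\sim (*,0)$, which forces $k=0$. The surviving term gives
\[
\bigl(\Ind^\sigma_{(*,0)}(f)\xi_0\bigr)((*,0),(*,0))=f((*,0),(*,0))\,\overline{\lambda_{000}(*)}.
\]
Since we arranged that $\lambda_{ijk}\equiv 1$ whenever two of the indices coincide, $\lambda_{000}(*)=1$, and so $\Ind^\sigma_{(*,0)}(f)\xi_0=f((*,0),(*,0))\xi_0$.

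Finally, since $\Ind^\sigma_{(*,0)}$ is a bona fide $*$-representation of $C_c(R(\psi),\sigma)$, it extends to a $*$-homomorphism on $C^*_{\red}(R(\psi),\sigma)$, which by Corollary~\ref{thm standard groupoid} equals $C^*(R(\psi),\sigma)$. Under the identification $B(\C\xi_0)\cong\C$, this extension is the desired $\pi_0$. There is no real obstacle here; the content of the lemma is the observation that $(*,0)$ has trivial orbit, so the induced representation collapses to a character whose value on $C_c(R(\psi),\sigma)$ is evaluation at $((*,0),(*,0))$.
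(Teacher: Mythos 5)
Your proposal is correct and follows exactly the same route as the paper: identify $\pi_0$ with $\Ind^\sigma_{(*,0)}$, note that $s^{-1}((*,0))$ is the single point $((*,0),(*,0))$ so the representation space is one-dimensional, use the normalisation $\lambda_{000}\equiv 1$ to see the action is multiplication by $f((*,0),(*,0))$, and extend by boundedness in the reduced norm to $C^*_{\red}(R(\psi),\sigma)=C^*(R(\psi),\sigma)$. No differences worth noting.
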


\begin{proof}
The inverse image $s^{-1}((*,0))$ consists of the single point $((*,0),(*,0))$, so the Hilbert space $\ell^2(s^{-1}(*,0))$ is one dimensional, and $\Ind^\sigma_{(*,0)}(f)$  is multiplication by the complex number
\begin{align*}
f((*,0),(*,0))\overline{\lambda_{000}(1)}=f((*,0),(*,0)).
\end{align*}
In other words, the representation $\Ind^\sigma_{(*,0)}$ of $C_c(R(\psi),\sigma)$ has the property we require of $\pi_0$. Since the reduced norm is $f\mapsto \sup_{y\in Y}\{\|\Ind^\sigma_y(f)\|\}$, $\Ind^\sigma_{(*,0)}$ is bounded for the reduced norm, and extends to a representation on $C^*_{\red}(R(\psi),\sigma)=C^*(R(\psi),\sigma)$.
\end{proof}

\begin{lemma}
Define $V_0:=U_0\setminus\{*\}$, $V_i:=U_i$ for $i\geq 1$ and $\mathcal{V}:=\{V_0,V_1,\cdots, V_n\}$. Then the ideal $\ker \pi_0$ is isomorphic to the $C^*$-algebra $A(\mathcal{V},\lambda_{ijk})$ of \cite[Example 5.23]{tfb}.
\end{lemma}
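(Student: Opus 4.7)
My plan is to realise $\ker\pi_0$ as the twisted $C^*$-algebra of a reduction of $R(\psi)$, and then identify that reduction with the groupoid algebra $A(\mathcal{V},\lambda_{ijk})$ of the cover $\mathcal{V}$.

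Since $(*,0)$ is the only element of its own equivalence class in $Y$, the singleton $\{(*,0)\}$ is a closed invariant subset of $R(\psi)^{(0)}=Y$, and $Y':=Y\setminus\{(*,0)\}$ is open and invariant. Let $\sigma'$ denote the restriction of $\sigma$ to the open subgroupoid $R(\psi)|_{Y'}:=\{\gamma\in R(\psi):r(\gamma),s(\gamma)\in Y'\}$. The standard exact sequence of twisted groupoid $C^*$-algebras associated with an open invariant subset of the unit space --- extension by zero embeds $C^*(R(\psi)|_{Y'},\sigma')$ as a closed ideal of $C^*(R(\psi),\sigma)$, with quotient the $C^*$-algebra of the singleton groupoid $R(\psi)|_{\{(*,0)\}}$, which is $\C$ --- identifies $\ker\pi_0$ with $C^*(R(\psi)|_{Y'},\sigma')$, because the quotient map agrees with $\pi_0$ on $C_c$ by the formula in Lemma~\ref{pi0}. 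Now $Y'=V_0\sqcup V_1\sqcup\cdots\sqcup V_n$ as clopen subsets (with $V_0=U_0\setminus\{*\}$), and $\psi'(s,i):=s$ defines a surjective local homeomorphism $\psi':Y'\to S$; comparison of equivalence relations gives $R(\psi)|_{Y'}=R(\psi')$, and $\sigma'$ transports to the cocycle $\sigma'(((s,i),(s,j)),((s,j),(s,k)))=\overline{\lambda_{ijk}(s)}$ on $R(\psi')$ used to construct $A(\mathcal{V},\lambda_{ijk})$.

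It then remains to construct an isomorphism $\Phi:C^*(R(\psi'),\sigma')\to A(\mathcal{V},\lambda_{ijk})$. On $C_c(R(\psi'),\sigma')$ define $\Phi(f)_{ij}(s):=f((s,i),(s,j))$ for $s\in V_{ij}$ and $0$ elsewhere; the set $\{((s,i),(s,j)):s\in V_{ij}\}$ is clopen in $R(\psi')$, so each $\Phi(f)_{ij}$ is continuous and compactly supported on $V_{ij}$ and extends to an element of $C(S)$ that vanishes off $V_{ij}$. A direct term-by-term comparison of \eqref{gpoidprod} with \eqref{RTprod} shows $\Phi$ preserves multiplication, and the normalisation $\lambda_{iji}=1$ (two indices coincide) shows it preserves the involution. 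To see $\Phi$ is isometric, fix $s\in S$ and an index $i$ with $s\in V_i$, and identify $\ell^2(s^{-1}((s,i)))$ with $\ell^2(\{k:s\in V_k\})$ via $e_{((s,k),(s,i))}\mapsto\xi_k$. A short computation from \eqref{defind} shows that in this basis the $(j,k)$-entry of $\Ind^{\sigma'}_{(s,i)}(f)$ is $f_{jk}(s)\overline{\lambda_{jki}(s)}$, which coincides with the $(j,k)$-entry $\overline{\lambda_{ijk}(s)}f_{jk}(s)$ of $\pi_{i,s}(\Phi(f))$ because the alternating property of $\lambda$, together with $\lambda_{ijk}\equiv 1$ when two indices coincide, forces the cyclic identity $\lambda_{jki}=\lambda_{ijk}$. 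Since the norm on $A(\mathcal{V},\lambda_{ijk})$ is the supremum of $\|\pi_{i,s}(\cdot)\|$ and $C^*(R(\psi'),\sigma')=C^*_{\red}(R(\psi'),\sigma')$ by Corollary~\ref{thm standard groupoid}, this yields $\|\Phi(f)\|=\|f\|$ on $C_c$. A density argument (approximating each $f_{ij}\in C_0(V_{ij})$ by functions in $C_c(V_{ij})$) shows $\Phi(C_c)$ is dense, so $\Phi$ extends to the required $C^*$-isomorphism.

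The main technical hurdle is the first step: carefully establishing the short exact sequence that identifies $\ker\pi_0$ with the reduction $C^*(R(\psi)|_{Y'},\sigma')$, which invokes the standard machinery for ideals of twisted groupoid $C^*$-algebras arising from open invariant subsets of the unit space. Once that is in hand, the remainder of the argument is a matching of formulas that is more notationally involved than conceptually deep.
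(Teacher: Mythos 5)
Your argument is correct, and its core --- the entrywise matching of \eqref{gpoidprod} with \eqref{RTprod} and of \eqref{defind} with \eqref{RTrep}, using the cyclic identity $\lambda_{jki}=\lambda_{ijk}$ for an alternating cocycle --- is exactly what the paper does. Where you genuinely diverge is in the first step. The paper works directly with $I_0:=C_c(R(\psi))\cap\ker\pi_0$: it defines $\phi(f)=(f_{ij})$ there, notes that the constraint $\pi_0(f)=0$ is precisely what makes the diagonal component over $U_0$ vanish at $*$ (so that $\phi(f)$ lands in $A(\mathcal{V},\lambda_{ijk})$ rather than $A(\mathcal{U}\cup\{U_0\},\cdot)$), and then takes the closure of $I_0$. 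You instead first excise the closed invariant singleton $\{(*,0)\}$, identify $\ker\pi_0$ with the twisted algebra of the reduction $R(\psi)|_{Y'}$ via the short exact sequence attached to the open invariant set $Y'=Y\setminus\{(*,0)\}$, and observe that this reduction is itself $R(\psi')$ for the tautological local homeomorphism $\psi':Y'\to S$. This buys you two things the paper leaves implicit: the density of the compactly supported functions in the ideal, and the automatic compact support of $f_{00}$ inside $V_0=U_0\setminus\{*\}$; it also makes visible that $\ker\pi_0$ is the Raeburn--Taylor groupoid algebra of the cover $\mathcal{V}$, which the paper only records afterwards in a remark. The cost is that you must cite the exactness of $0\to C^*(G|_U,\sigma)\to C^*(G,\sigma)\to C^*(G|_F,\sigma)\to 0$ for an open invariant $U\subseteq G^{(0)}$ (available for the full algebras, and harmless here since Corollary~\ref{thm standard groupoid} gives $C^*=C^*_{\red}$ for all the groupoids involved), and verify, as you do, that $R(\psi)|_{Y'}$ and $R(\psi')$ coincide as topological groupoids --- which holds because the only point of $Y'$ over $*$ is $(*,1)$. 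Both routes are sound; yours is slightly longer but more self-policing, the paper's is shorter but relies on the reader supplying the density of $I_0$ in $\ker\pi_0$.
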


\begin{proof}
The maps $\phi_{ij}:(V_i\times\{i\})\times_\psi(V_j\times \{j\})\to S$ defined by $\phi_{ij}:((s,i),(s,j))\mapsto s$ are homeomorphisms of $(V_i\times\{i\})\times_\psi(V_j\times \{j\})$ onto $V_{ij}$. Thus for $f\in C_c(R(\psi))$ such that $\pi_0(f)=0$, we can define $f_{ij}:V_{ij}\to \C$ by $f_{ij}=f\circ \phi_{ij}^{-1}$. For $\{i,j\}\not=\{0,1\}$, the function $f_{ij}$ has compact support, and extends uniquely to a continuous function $f_{ij}$ on $S$ with support in $V_{ij}$; because $f((*,0),(*,0))=\pi_0(f)=0$, the function $f_{01}$ vanishes on the boundary of $V_{01}=V_0$, and extends to a continuous function $f_{01}$ on $S$ which vanishes off $V_{01}$. 

At this point, we have constructed a map $\phi:f\mapsto (f_{ij})$ of $I_0:=C_c(R(\psi))\cap \ker\pi_0$ into the underlying set of $A(\mathcal{V},\lambda_{ijk})$. Since $f_{ij}(s)=f((s,i),(s,j))$, a comparison of \eqref{gpoidprod} with \eqref{RTprod} shows that $\phi$ is a homomorphism. It is also $*$-preserving. If $s\in V_i$, then a comparison of \eqref{defind} with \eqref{RTrep}, and an argument similar to the proof of Lemma~\ref{lemequiv}, show that $\pi_{i,s}\circ\phi$ is unitarily equivalent to the representation $\Ind^\sigma_{(s,i)}$. Thus $\phi$ is isometric for the reduced norm on $I_0$, and since the range is dense in $A(\mathcal{V},\lambda_{ijk})$, $\phi$ extends to an isomorphism of the closure $\ker\pi_0$ onto $A(\mathcal{V},\lambda_{ijk})$.
\end{proof}

\begin{rmk}
If we delete the point $(*,0)$ from $X$, we recover the original space $S$, the groupoid $R(\psi)$ is the one associated to the cover $\mathcal{V}$ of $S$ in Remark 3 on page~399 of \cite{RT}, and the isomorphism $\Phi$ of $C_{\red}^*(R(\psi),\sigma)$ with $A(\mathcal{V},\lambda_{ijk})$ is discussed in that remark. 
\end{rmk} 

Theorem ~1 of \cite{RT} (or Proposition~5.40 of \cite{tfb}) implies that $A(\mathcal{V},\lambda_{ijk})$ is a continuous-trace algebra with Dixmier-Douady class $\deltaDD(A(\mathcal{V},\lambda_{ijk}))=[\lambda_{ijk}]\not=0$. This implies that the ideal $\ker\pi_0$ in $C^*(R(\psi),\sigma)$ has $\deltaDD(\ker\pi_0)\not=0$. Now Proposition~\ref{prop delta is 0} implies that $\delta(C^*(R(\psi),\sigma))\not=0$.

\subsection{Epilogue}

We started this project looking for a cocycle-based
version of the Dixmier-Douady invariant of \cite{aHKS}, which would enable us to resolve the issue about compatibility of $\delta(A)$ and
$\deltaDD(A)$ in \cite[Remark~7.10]{aHKS}, and to 
construct concrete families of Fell algebras as in \cite{RT}. Since the spectrum $X$ of a Fell algebra is
locally locally-compact and locally Hausdorff, it always has covers by
open Hausdorff subsets such that the overlaps $U_{ij}$ where cocycles live
lie inside large Hausdorff subsets of $X$. So it seemed reasonable that
cocycle-based arguments might work.

As we progressed, we realised how crucially the steps by which one refines
covers, as in the proof of \cite[Proposition~5.24]{tfb}, for example,
depend on the existence of locally finite refinements. In the example of
\S\ref{sec-example}, this local finiteness fails spectacularly. So even
though we know that that the algebra in \S\ref{sec-example} is a Fell
algebra, and even though we know it must have vanishing Dixmier-Douady
invariant, it is hard to see how a cocycle-based theory could accommodate it.

The second part of our project has worked to some extent, in that we can
see how to build lots of Fell algebras from ordinary \v{C}ech cocycles.
However, we can also see that the possibilities are almost limitless, and
at this stage there seems little hope of finding a computable invariant.

\appendix\label{app}
\section{Twisted groupoid $C^*$-algebras}\label{app-twists2}

There are several different ways of twisting the construction of a groupoid $C^*$-algebra. They include: 
\begin{enumerate} 
\item\label{a} Renault's $C^*(G, \sigma)$ associated to a $2$-cocycle $\sigma:G^{(2)}\to \T$ on a groupoid $G$ from \cite[II.1]{Ren} (which we discuss in \S\ref{sec-back} and use in \S\ref{Lisa} and \S\ref{examples});
\item
Kumjian's $C^*(\Gamma;G)^{{\Kum}}$ associated to a twist $\Gamma$ over a principal, \'etale groupoid $G$ in \cite[\S2]{K} (which we use in \S\ref{trivialDD});
\item\label{c} Muhly and Williams' $C^*(E;G)^{{\MW}}$ associated to an extension  $E$ of a  groupoid $G$ by $\T$ in \cite[\S2]{MW} (which we use in the proof of Theorem~\ref{Cartan=Fell}); 
\item the reduced $C^*$-algebras $C^*_{\red}(G, \sigma)$ and  $C_{\red}^*(E;G)^{{\MW}}$ corresponding to  \eqref{a} and \eqref{c}, respectively.
\end{enumerate} 
Here we only consider second-countable, locally compact, Hausdorff and principal group\-oids $G$ with a left Haar system $\lambda=\{\lambda^u\}$. Let $\sigma:G^{(2)}\to\T$ be a continuous normalised $2$-cocycle on $G$. Following~\cite[page~73]{Ren} we denote by $G^\sigma$ the associated extension of $G$ by $\T$:  thus   $G^\sigma$ is the groupoid $\T\times G$  with the product topology, with  range and source  maps $r(z,\alpha)=(1,r(\alpha))$ and $s(z,\alpha)=(1,s(\alpha))$, multiplication $(w,\alpha)(z,\beta)=(wz\sigma(\alpha,\beta),\alpha\beta)$ and inverse $(z,\alpha)^{-1}=(z^{-1}\sigma(\alpha,\alpha^{-1})^{-1},\alpha^{-1})$.    Then $G^\sigma$ is a locally compact Hausdorff groupoid with left Haar system $\sigma=\{\sigma^u\}$, where $\sigma^u$ is the product of the normalised  Haar measure on $\T$ and $\lambda^u$. 

A twist $\Gamma$  over a principal \'etale  groupoid $G$ has an underlying principal $\T$-bundle over $G$, and in \cite[page~985]{K} Kumjian observes that the twists whose underlying bundle is trivial are in one-to-one correspondence with  continuous $2$-cocycles $\sigma$. 

Set
\[
C_c(G^\sigma;G):=\{f\in C_c(G^\sigma): f(z\cdot \gamma)=zf(\gamma)\text{ for $z\in \T$, $\gamma\in G^\sigma$}\}.
\]
It is easy to check  that $C_c(G^\sigma;G)$ is a $*$-subalgebra of the usual convolution algebra $C_c(G^\sigma)$. The $C^*$-algebra $C^*(G^\sigma;G)^{\MW}$ is by definition the completion of $C_c(G^\sigma;G)$ in the supremum norm $\|f\|=\sup\{\|L(f)\|\}$, where $L$ ranges over a collection of appropriately continuous $*$-representations of $C_c(G^\sigma;G)$. By \cite[Proposition~3.7]{BaH}, $C^*(G^\sigma;G)^{\MW}$ is a direct summand of $C^*(G^\sigma)$.

We know from \cite[Lemma~3.1]{BaH} that the map $\rho:C_c(G^\sigma; G)\to C_c(G,\overline{\sigma})$ defined by $\rho(f)(\alpha)=f(1,\alpha)$ is a $*$-isomorphism of $C_c(G^\sigma;G)$ onto Renault's twisted convolution algebra $C_c(G,\overline{\sigma})$ (the algebra $C_c(G^\sigma;G)$ is denoted $C_c(G^\sigma,-1)$ in \cite{BaH}), and that $\rho$ extends to an isomorphism of $C^*(G^\sigma;G)^{\MW}$ onto $C^*(G,\overline{\sigma})$. 

Let $u\in G^{(0)}$. For the proof of Theorem~\ref{Cartan=Fell} we need to know that the isomorphism $\rho$ sends the class of certain irreducible representations $L^u$ of $C^*(G^\sigma;G)^{\MW}$ defined in \cite[\S3]{MW} to the class of the representation $\Ind_u^{\overline\sigma}$ of $C^*(G,\overline{\sigma})$.  The Hilbert space of $L^u$ is  the completion $H_u$ of $H_u^0:=\{g\in C_c(G^\sigma; G):\supp g\subset \T\times s^{-1}(u)\}$ with respect to the inner product $(f\,|\,g)=\int_Gf(1,\alpha)\overline{g(1,\alpha)}\, d\lambda_u(\alpha)$. Let  $f\in C_c(G^\sigma;G)$ and $g\in H_u^0$. Then $L^u(f)g=f*g$, where the convolution takes place in $C_c(G^\sigma; G)$.  We compute using the formulas in \cite[Remark~2.3]{BaH} and the cocycle identity for the triple $(\beta,\beta^{-1},\alpha)$:
\begin{align}
(L^u(f)g)(1,\alpha)&=\int_G f(1,\beta)g\big((1,\beta)^{-1}(1,\alpha)\big)\, d\lambda^{r(\alpha)}(\beta)\notag\\
&=\int_G f(1,\beta)g\big(\overline{\sigma(\beta,\beta^{-1})}\sigma(\beta^{-1},\alpha),\beta^{-1}\alpha\big)\, d\lambda^{r(\alpha)}(\beta)\notag\\
&=\int_G f(1,\beta)g\big(\overline{\sigma(\beta,\beta^{-1}\alpha)}\sigma(\beta\beta^{-1},\alpha),\beta^{-1}\alpha\big)\, d\lambda^{r(\alpha)}(\beta)\notag\\
&=\int_G f(1,\beta)g(1,\beta^{-1}\alpha)\overline{\sigma(\beta,\beta^{-1}\alpha)}\, d\lambda^{r(\alpha)}(\beta)\label{eq-thesame}.
\end{align}
Let $U:C_c(G^\sigma;G)\to L^2(s^{-1}(u),\lambda_u)$ be the map defined by $(Ug)(\alpha)=g(1,\alpha)$. Then $U$ extends to a unitary $U$ from the Hilbert space $H_u$ of $L^u$ onto the Hilbert space $L^2(s^{-1}(u),\lambda_u)$ of $\Ind_u^{\overline\sigma}$. We have
\begin{align*}
(\Ind_u^{\overline\sigma}(\rho(f))U(g))(\alpha)&=\int_G\rho(f)(\beta)U(g)(\beta^{-1}\alpha)\overline{\sigma(\beta,\beta^{-1}\alpha)}\, d\lambda^{r(\alpha)}(\beta)\\
&=\int_G f(1,\beta)g(1, \beta^{-1}\alpha)\overline{\sigma(\beta,\beta^{-1}\alpha)}\, d\lambda^{r(\alpha)}(\beta),
\end{align*}
 which is $(U(L^u(f)g))(\alpha)$ using \eqref{eq-thesame}.  We have proved:
 
 \begin{lemma}\label{lem-rho} Let $G$ be a principal groupoid with Haar system and $\sigma:G^{(2)}\to\T$ a continuous normalised $2$-cocycle. 
Then for each unit $u$, $\Ind_u^{\overline\sigma}\circ\rho$ is unitarily equivalent to  $L^u$.
\end{lemma}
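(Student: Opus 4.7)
The plan is to produce an explicit unitary intertwiner between the two representations, on the natural dense subspace where both sides are given by honest integrals. First I would unpack the Hilbert spaces involved: $L^u$ acts by convolution on the completion $H_u$ of
\[
H_u^0 = \{g\in C_c(G^\sigma;G) : \supp g\subset \T\times s^{-1}(u)\}
\]
with respect to the inner product $(f\mid g)=\int_G f(1,\alpha)\overline{g(1,\alpha)}\,d\lambda_u(\alpha)$, while $\Ind_u^{\overline\sigma}\circ\rho$ acts on $L^2(s^{-1}(u),\lambda_u)$ by the formula recalled in \S\ref{sec-back}. The obvious candidate unitary is $U:H_u^0\to L^2(s^{-1}(u),\lambda_u)$ defined by $(Ug)(\alpha)=g(1,\alpha)$; the very definition of the inner product on $H_u^0$ shows $U$ is isometric, and since every $\xi\in C_c(s^{-1}(u))$ extends to an element of $H_u^0$ by $(z,\alpha)\mapsto z\xi(\alpha)$, the image of $U$ is dense. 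So $U$ extends to a unitary.

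Next I would set up the main computation. Starting from $(L^u(f)g)(1,\alpha)=(f*g)(1,\alpha)$, I would expand the convolution on $G^\sigma$ using the multiplication and inversion formulas for $G^\sigma$ recorded in \cite[Remark~2.3]{BaH}. This produces an integrand involving the factor
\[
\overline{\sigma(\beta,\beta^{-1})}\,\sigma(\beta^{-1},\alpha),
\]
together with $g\bigl(1,\beta^{-1}\alpha\bigr)$. The key algebraic step is to apply the $2$-cocycle identity to the triple $(\beta,\beta^{-1},\alpha)$ and use the normalisation $\sigma(r(\alpha),\alpha)=1$; this collapses the pair of cocycle factors into the single $\overline{\sigma(\beta,\beta^{-1}\alpha)}$.

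With that reduction in hand, $(L^u(f)g)(1,\alpha)$ becomes exactly the integral
\[
\int_G f(1,\beta)\,g(1,\beta^{-1}\alpha)\,\overline{\sigma(\beta,\beta^{-1}\alpha)}\,d\lambda^{r(\alpha)}(\beta),
\]
which is visibly $\bigl(\Ind_u^{\overline\sigma}(\rho(f))(Ug)\bigr)(\alpha)$ after substituting $\rho(f)(\beta)=f(1,\beta)$ and $(Ug)(\beta^{-1}\alpha)=g(1,\beta^{-1}\alpha)$. Thus $U\circ L^u(f)=\Ind_u^{\overline\sigma}(\rho(f))\circ U$ on the dense subspace $H_u^0$, and by continuity this intertwining extends to all of $H_u$, establishing the unitary equivalence.

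The only real obstacle is the cocycle bookkeeping: one must pick the right triple in the cocycle identity and be careful with which cocycle is $\sigma$ and which is $\overline\sigma$ (recall $\rho$ takes $C_c(G^\sigma;G)$ to $C_c(G,\overline\sigma)$, so the induced representation of the target must be $\Ind_u^{\overline\sigma}$, not $\Ind_u^\sigma$). Once the identity $\overline{\sigma(\beta,\beta^{-1})}\sigma(\beta^{-1},\alpha)=\overline{\sigma(\beta,\beta^{-1}\alpha)}$ is in place, the rest of the argument is essentially a matching of integrands, and no further analytic input beyond density of $H_u^0$ in $H_u$ is needed.
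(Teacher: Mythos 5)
Your proposal is correct and follows essentially the same route as the paper: the same unitary $U$ defined by $(Ug)(\alpha)=g(1,\alpha)$, the same expansion of the convolution on $G^\sigma$ via the formulas of \cite[Remark~2.3]{BaH}, and the same application of the cocycle identity to the triple $(\beta,\beta^{-1},\alpha)$ together with normalisation and the $\T$-equivariance of $g$ to collapse the cocycle factors into $\overline{\sigma(\beta,\beta^{-1}\alpha)}$.
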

 
Now let $G$ be a principal and \'etale groupoid. It is shown in \cite[pages~977--8]{K} that there is a positive-definite $C_0(G^{(0)})$-valued inner product on $C_c(G^\sigma;G)$, and that the action of $C_c(G^\sigma;G)$ by left multiplication on itself extends to an action by adjointable operators on the Hilbert-module completion $H(G^\sigma)$. Then the $C^*$-algebra $C^*(G^\sigma;G)^{\Kum}$ is by definition the completion of $C_c(G^\sigma;G)$ in $\LL(H(G^\sigma))$. 

\begin{prop}\label{checktwists} Let $G$ be an \'etale and principal groupoid, and $\sigma:G^{(2)}\to\T$ a continuous normalised $2$-cocycle. 
The homomorphism $\rho$  of $C_c(G^\sigma; G)$ onto $C_c(G,\overline{\sigma})$ extends to an isomorphism of  $C^*(G^\sigma;G)^{\Kum}$  onto the reduced crossed product $C_{\red}^*(G,\overline{\sigma})$.
\end{prop}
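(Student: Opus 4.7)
The plan is to identify the Kumjian Hilbert module construction with the field of representations $\{L^u\}_{u \in G^{(0)}}$ appearing in Lemma~\ref{lem-rho}, and then to use that lemma to translate these into the induced representations $\Ind_u^{\overline\sigma}$ defining $C^*_{\red}(G,\overline\sigma)$. Since we already know from \cite[Lemma~3.1]{BaH} that $\rho$ is a $*$-isomorphism at the level of the convolution algebras, what remains is to show that $\rho$ is isometric for the two completions.

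First I would fix $u \in G^{(0)}$ and examine how $H(G^\sigma)$ fibers at $u$. Because $G$ is \'etale, $\lambda_u$ is counting measure on $s^{-1}(u)$, so the pointwise evaluation at $u$ of Kumjian's $C_0(G^{(0)})$-valued inner product on $C_c(G^\sigma;G)$ agrees, modulo the $\T$-equivariance built into $C_c(G^\sigma;G)$, with the $L^2$-inner product on $H_u$ that defines $L^u$. This yields a contractive linear map $\pi_u : H(G^\sigma)\to H_u$ intertwining left multiplication by $C_c(G^\sigma;G)$ on $H(G^\sigma)$ with the representation $L^u$.

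Next I would invoke the standard fact that for an adjointable operator $T$ on a Hilbert module over $C_0(X)$ the operator norm equals $\sup_{x\in X}\|T_x\|$, where $T_x$ denotes the induced operator on the fiber at $x$. Applied to left multiplication by $f \in C_c(G^\sigma;G)$ this gives
\[
\|f\|_{C^*(G^\sigma;G)^{\Kum}} \;=\; \sup_{u\in G^{(0)}} \|L^u(f)\|.
\]
By Lemma~\ref{lem-rho}, each $L^u$ is unitarily equivalent to $\Ind_u^{\overline\sigma}\circ\rho$, so the right-hand side equals $\sup_u \|\Ind_u^{\overline\sigma}(\rho(f))\| = \|\rho(f)\|_{\red}$. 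Thus $\rho$ is isometric, and since its image $C_c(G,\overline\sigma)$ is dense in $C^*_{\red}(G,\overline\sigma)$, $\rho$ extends to the required isomorphism.

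The main obstacle will be the fiberwise identification in the second step: one must check carefully that the fiber of the Hilbert module $H(G^\sigma)$ over $u$ really is $H_u$, and that left multiplication descends to $L^u$ on that fiber. This amounts to a definition-chasing calculation using the explicit formulas for the Kumjian inner product, the $\T$-equivariance condition defining $C_c(G^\sigma;G)$, and the fact that the étale hypothesis replaces integrals against $\lambda_u$ by sums over $s^{-1}(u)$. Once this fiber picture is established, the remaining norm comparison and the appeal to Lemma~\ref{lem-rho} are immediate.
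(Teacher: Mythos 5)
Your proposal is correct and follows essentially the same route as the paper: the paper also reduces to showing $\rho$ is isometric, realises Kumjian's norm as $\sup_u\|H(G^\sigma)\dashind\epsilon_u(f)\|$ by inducing the atomic representation of $C_0(G^{(0)})$ (your ``fiberwise'' norm formula), identifies the fiber at $u$ with $H_u$ and left multiplication with $L^u$ by the same inner-product computation, and then invokes Lemma~\ref{lem-rho}.
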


In view of what we already know about $\rho$ from \cite{BaH}, it suffices to check that $\rho$ is isometric for the given norm on $C_c(G^\sigma)\subset \LL(H(G^\sigma))$ and the reduced norm on $C_c(G,\overline{\sigma})$. The general theory of Hilbert bimodules says that, if $\pi$ is a faithful representation of $C_0(G^{(0)})$, then the induced representation $H(G^\sigma)\dashind \pi$ is faithful on $\LL(H(G^\sigma))$. We can in particular take $\pi$ to be the atomic representation $\bigoplus_{u\in G^{(0)}}\epsilon_u$, and then $H(G^\sigma)\dashind \pi= \bigoplus_{u\in G^{(0)}}H(G^\sigma)\dashind \epsilon_u$. So Proposition~\ref{checktwists} follows from the following lemma.

\begin{lemma}
Let $G$ be an \'etale and principal groupoid, and $\sigma:G^{(2)}\to\T$ a continuous normalised $2$-cocycle.  For each $u\in G^{(0)}$, the representation $H(G^\sigma)\dashind \epsilon_u$ is unitarily equivalent to the representation $(\Ind_u^{\overline{\sigma}})\circ\rho$ of $C_c(G^\sigma;G)$.
\end{lemma}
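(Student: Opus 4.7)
The plan is to identify the Hilbert space of $H(G^\sigma)\dashind\epsilon_u$ with $L^2(s^{-1}(u),\lambda_u)$ by the map sending $f\otimes 1\mapsto (\alpha\mapsto f(1,\alpha))$, and then to observe that the intertwining calculation is the same one carried out in the proof of Lemma~\ref{lem-rho}.

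First I would compute Kumjian's $C_0(G^{(0)})$-valued inner product on $C_c(G^\sigma;G)$. By definition this inner product is $\langle f,g\rangle(u)=(f^**g)(1,u)$, where the convolution takes place in $C_c(G^\sigma)$ with the Haar system $d\mathrm{Haar}_\T\times\lambda$. Using the explicit formulas for the involution and multiplication in $G^\sigma$ from \cite[Remark~2.3]{BaH}, the $\T$-equivariance $f(z,\alpha)=zf(1,\alpha)$, the normalisation conditions $\sigma(\beta^{-1},s(\beta^{-1}))=1$, and $|\sigma|=1$, the cocycle factors cancel after integration over $\T$. Since $G$ is \'etale, $\lambda^u$ is counting measure, and one obtains
\[
\langle f,g\rangle(u)=\sum_{\alpha\in s^{-1}(u)}\overline{f(1,\alpha)}\,g(1,\alpha).
\]

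With this inner product in hand, the induced Hilbert space $H(G^\sigma)\otimes_{\epsilon_u}\C$ carries the inner product $\langle f\otimes 1,g\otimes 1\rangle=\langle f,g\rangle(u)$, which is exactly the $\ell^2(s^{-1}(u))=L^2(s^{-1}(u),\lambda_u)$ inner product pulled back along $U:f\otimes 1\mapsto (\alpha\mapsto f(1,\alpha))$. So $U$ is isometric. To see the image is dense, one uses that each $\alpha\in s^{-1}(u)$ has an open bisection neighbourhood $V\subset G$ on which $s$ is a homeomorphism, and builds elements of $C_c(G^\sigma;G)$ supported in $\T\times V$ that realise a given finitely supported function on $s^{-1}(u)$; thus $U$ extends to a unitary.

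Finally, $f\in C_c(G^\sigma;G)$ acts on $g\otimes 1$ by left convolution in $C_c(G^\sigma;G)$, i.e.\ $f\cdot (g\otimes 1)=(f*g)\otimes 1$, so $U(f\cdot(g\otimes 1))(\alpha)=(f*g)(1,\alpha)$. But this is precisely the quantity computed in equation~\eqref{eq-thesame}, namely
\[
(f*g)(1,\alpha)=\int_G f(1,\beta)\,g(1,\beta^{-1}\alpha)\,\overline{\sigma(\beta,\beta^{-1}\alpha)}\,d\lambda^{r(\alpha)}(\beta),
\]
and the right-hand side equals $\big(\Ind_u^{\overline\sigma}(\rho(f))(U(g\otimes 1))\big)(\alpha)$ by the definition of $\Ind_u^{\overline\sigma}$ and the relation $\rho(f)(\beta)=f(1,\beta)$. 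Hence $U$ intertwines $H(G^\sigma)\dashind\epsilon_u$ with $(\Ind_u^{\overline\sigma})\circ\rho$.

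The only step requiring genuine work is the cocycle bookkeeping in the computation of $\langle f,g\rangle(u)$; once that is done, the rest consists of invoking the convolution identity from \eqref{eq-thesame} and a standard density argument for extending $U$.
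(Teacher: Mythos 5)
Your proposal is correct and follows essentially the same route as the paper: compute Kumjian's inner product at $u$ using the product Haar system on $G^\sigma$ and $\T$-equivariance to identify the induced Hilbert space with $\ell^2(s^{-1}(u))$, then observe that the module action is left convolution and invoke the computation in \eqref{eq-thesame}. The only cosmetic difference is that the paper first identifies $H(G^\sigma)\dashind\epsilon_u$ with the representation $L^u$ on $H_u$ and then cites Lemma~\ref{lem-rho}, whereas you inline that second step by writing down the unitary $U$ directly.
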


\begin{proof}
Since $G$ is \'etale, the representation $\Ind_u^{\overline{\sigma}}$ of $C^*(G,\overline{\sigma})$ acts on $\ell^2(s^{-1}(u))$ by the formula
\begin{equation}\label{Indu}
(\Ind_u^{\overline{\sigma}}(h)\xi)(\alpha)=\sum_{r(\beta)=r(\alpha)} h(\beta)\xi(\beta^{-1}\alpha)\overline{\sigma(\beta,\beta^{-1}\alpha)}
\end{equation}
for $h\in C_c(G)$, $\xi\in\ell^2(s^{-1}(u))$, $\alpha\in G$. The representation $H(G^\sigma)\dashind \epsilon_u$ acts on (the completion of) $C_c(G^\sigma;G)\otimes_{C_0(G^{(0)})}\C$, which is $C_c(G^\sigma;G)$ with the inner product $(f\,|\,g)=(g^**f)(u)$. The Haar system on $G^\sigma$ is the product of the normalised Haar measure on $\T$ and the counting measure on $s^{-1}(u)$. Let  $f,g\in C_c(G^\sigma;G)$.  We have
\begin{align*}
(f\,|\,g)=(g^**f)(u)&=\int_{\T}\Big(\sum_{s(\alpha)=u}g^*(\overline{z},\alpha^{-1})f(z,\alpha)\Big)\,dz
=\int_{\T}\Big(\sum_{s(\alpha)=u}\overline{g(z,\alpha)}f(z,\alpha)\Big)\,dz\\
&=\int_{\T}\Big(\sum_{s(\alpha)=u}\overline{zg(1,\alpha)}zf(1,\alpha)\Big)\,dz=\sum_{s(\alpha)=u}f(1,\alpha)\overline{g(1,\alpha)},
\end{align*}
and it follows that the Hilbert space of $H(G^\sigma)\dashind \epsilon_u$ is the space $H_u$. The action of $H(G^\sigma)\dashind \epsilon_u(f)$ on $g$ is by multiplication, so $H(G^\sigma)\dashind \epsilon_u(f)=L^u(f)$. Now the result follows from Lemma~\ref{lem-rho}.
\end{proof}

\begin{cor}\label{Lisa2}
Suppose that $G$ is a second-countable, locally compact, Hausdorff, \'etale, principal and Cartan groupoid, and that $\sigma$ is a continuous normalised $2$-cocycle on $G$. Then
\begin{equation}\label{Lisa'send}
C^*(G^\sigma;G)^{\MW}\cong C^*(G,\overline{\sigma})=C_r^*(G,\overline{\sigma})\cong C^*(G^\sigma;G)^{\Kum}.
\end{equation}
\end{cor}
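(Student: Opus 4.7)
The proof will be a short assembly argument: each of the three identifications in the displayed chain has already been established in the body of the paper or in this appendix, so all that remains is to stitch them together and check that the Cartan hypothesis supplies what is needed to invoke Theorem~\ref{Cartan=Fell}.

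First I would obtain the leftmost isomorphism $C^*(G^\sigma;G)^{\MW}\cong C^*(G,\overline{\sigma})$ directly from \cite[Lemma~3.1]{BaH}, which was recalled earlier in this appendix: the map $\rho(f)(\alpha)=f(1,\alpha)$ is a $*$-isomorphism of $C_c(G^\sigma;G)$ onto $C_c(G,\overline{\sigma})$ and extends to a $C^*$-isomorphism on completions. This step does not use the Cartan or étale hypotheses.

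Next I would obtain the middle equality $C^*(G,\overline{\sigma})=C^*_{\red}(G,\overline{\sigma})$ by quoting Theorem~\ref{Cartan=Fell}. The hypotheses of Corollary~\ref{Lisa2} give exactly what the theorem needs: $G$ is second-countable, locally compact, Hausdorff, principal, Cartan, and, being étale, carries a Haar system (counting measure on each $r$-fibre). Moreover $\overline{\sigma}\in Z^2(G,\T)$ whenever $\sigma$ is, so condition \eqref{C=Fd} of Theorem~\ref{Cartan=Fell} applies to $\overline{\sigma}$ and yields the equality of the full and reduced twisted algebras.

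Finally the rightmost isomorphism $C^*_{\red}(G,\overline{\sigma})\cong C^*(G^\sigma;G)^{\Kum}$ is exactly the content of Proposition~\ref{checktwists}, which was proved just above using the identification $H(G^\sigma)\dashind \epsilon_u\cong (\Ind_u^{\overline\sigma})\circ\rho$ from the preceding lemma; the étale and principal hypotheses needed there are again part of the hypotheses of the corollary. Chaining the three identifications produces \eqref{Lisa'send}. There is no real obstacle here, since every ingredient is in place; the only point worth verifying is that the Cartan hypothesis on $G$ is the correct input to Theorem~\ref{Cartan=Fell} for the twisted cocycle $\overline{\sigma}$, which is immediate because Cartan is a property of $G$ alone and the theorem is stated for arbitrary $\sigma\in Z^2(G,\T)$.
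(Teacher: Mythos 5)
Your proposal is correct and matches the paper's own proof exactly: the first isomorphism is quoted from \cite[Lemma~3.1]{BaH}, the middle equality from Theorem~\ref{Cartan=Fell}, and the last isomorphism from Proposition~\ref{checktwists}. Your added verifications (that an \'etale groupoid carries a Haar system and that $\overline{\sigma}\in Z^2(G,\T)$) are sensible hypothesis checks that the paper leaves implicit.
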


\begin{proof}
The first isomorphism is from \cite[Lemma~3.1]{BaH}, the equality in the middle is from Theorem~\ref{Cartan=Fell}, and the second isomorphism is from Proposition~\ref{checktwists}.
\end{proof}

\begin{rmk}
We can relax the hypothesis ``$G$ is Cartan'' in Corollary~\ref{Lisa2}. 
That hypothesis was used in the proof of Theorem~\ref{Cartan=Fell} to see that $u\mapsto [L^u]$ is a homeomorphism of $\go/G$ onto $(C^*(G^\sigma;G)^{\MW})^\wedge$; then Lemma~\ref{lem-rho} implies that every irreducible representation of $C^*(G,\sigma)$ is induced, and we have equality in the middle of \eqref{Lisa'send}. However, if we merely know that ``$\go/G$ is T$_0$'', then we can use \cite[Theorem~3.4]{CaH2} in place of \cite[Proposition~3.2]{CaH2} in the proof of Theorem~\ref{Cartan=Fell}, still deduce that $u\mapsto [L^u]$ is a homeomorphism, and follow the rest of the argument to get $C^*(G,\overline{\sigma})=C_r^*(G,\overline{\sigma})$.
\end{rmk}

\section{The ideal of continuous-trace elements}

If $A$ is a $C^*$-algebra, we write $\frak{m}(A)$ for the ideal spanned by the positive elements $a$ such that $\pi\mapsto\tr(\pi(a))$ is continuous on $\hat A$, as in \cite[\S4.5.2]{D}. The closure of $\frak{m}(A)$ is an ideal in $A$, which we call the \emph{ideal of continuous-trace elements}. 

\begin{prop}\label{openH}
Suppose that $A$ is a $C^*$-algebra. Then
\[
U:=\{\pi\in\hat A:\text{$\pi$ has a closed Hausdorff neighbourhood}\}
\]
is an open Hausdorff subset of $\hat A$.
\end{prop}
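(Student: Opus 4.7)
The plan is to prove openness and Hausdorffness of $U$ separately, each by a short neighbourhood argument based on a witnessing closed Hausdorff neighbourhood $N_\pi$ of each $\pi \in U$.

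For openness, I would start from $\pi \in U$ with a closed Hausdorff neighbourhood $N$. Any $\rho \in \operatorname{int}(N)$ then has $N$ itself as a closed Hausdorff neighbourhood (it contains the open set $\operatorname{int}(N)$, hence a neighbourhood of $\rho$). So $\operatorname{int}(N) \subset U$, and $U$ is a union of such interiors, hence open.

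For Hausdorffness, I would take distinct $\pi, \rho \in U$ with closed Hausdorff neighbourhoods $N_\pi, N_\rho$ and split into cases. If $\rho \notin N_\pi$, the open sets $\operatorname{int}(N_\pi)$ and $\hat A \setminus N_\pi$ separate them; if $\pi \notin N_\rho$, argue symmetrically. Otherwise both points lie in $N_\pi$, and I would use Hausdorffness of $N_\pi$ to find relatively open disjoint $V_\pi, V_\rho \subset N_\pi$ with $\pi \in V_\pi$ and $\rho \in V_\rho$. Writing $V_\pi = W_\pi \cap N_\pi$ and $V_\rho = W_\rho \cap N_\pi$ for $W_\pi, W_\rho$ open in $\hat A$, the claim would be that $W_\pi \cap \operatorname{int}(N_\pi)$ and $W_\rho \cap \operatorname{int}(N_\rho)$ are the required separating open neighbourhoods in $\hat A$.

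The delicate step is this final construction. Intersecting with $\operatorname{int}(N_\pi)$ works directly on $\pi$'s side, but the obvious analogue on $\rho$'s side fails because $\rho$ need not lie in $\operatorname{int}(N_\pi)$; the fix is to intersect with $\operatorname{int}(N_\rho)$ instead, where $\rho$ does lie. Disjointness still goes through because any $\tau$ in both open sets lies in $\operatorname{int}(N_\pi) \subset N_\pi$, which combined with $\tau \in W_\pi$ and $\tau \in W_\rho$ forces $\tau \in V_\pi \cap V_\rho = \emptyset$. That the symmetry between the two points is broken in this way (using $N_\pi$ to separate, but a mix of interiors to realise openness) is what I expect to be the main obstacle and the part worth writing out carefully.
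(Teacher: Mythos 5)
Your proof is correct, and the openness half coincides with the paper's argument verbatim; but your treatment of Hausdorffness takes a genuinely different, and more elementary, route in the hard case. The paper splits according to whether one point lies in the \emph{interior} of the other's closed Hausdorff neighbourhood: when $x\in\inter N_y$ it separates inside $N_y$ much as you do (there the interior hypothesis is needed to make $V_x\cap\inter N_y$ a neighbourhood of $x$), but in the residual case where $x\notin\inter N_y$ and $y\notin\inter N_x$ it invokes the local local-compactness of $\hat A$ (Dixmier, Corollaire~3.3.8) to find a compact neighbourhood $K$ of $y$ inside $\inter N_y$, observes that $K$ is closed in $\hat A$ because $N_y$ is Hausdorff and closed, and separates with $\inter K$ and $(\inter N_x)\setminus K$. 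You instead split on whether one point lies in the other's closed neighbourhood at all, dispose of the easy case with the complement of that closed set, and in the remaining case use the asymmetric device of intersecting the two relative separators with $\inter(N_\pi)$ and $\inter(N_\rho)$ respectively; the disjointness check you flag as delicate does go through exactly as you describe, since a common point would lie in $\inter(N_\pi)\subset N_\pi$ and hence in $V_\pi\cap V_\rho=\emptyset$. What your version buys is that it uses nothing about $\hat A$ beyond the definition of $U$, so it establishes the statement for an arbitrary topological space, whereas the paper's residual case leans on a special property of spectra of $C^*$-algebras; the paper gains nothing in return, so your argument is, if anything, the cleaner one to record.
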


\begin{proof}
Let $x\in U$, and choose a closed Hausdorff neighbourhood $N$ of $x$. Then for each point $y$ in the interior $\inter N$, $N$ is a closed Hausdorff neighbourhood of $y$, and hence $y\in U$. Thus $U$ is open.

To see that $U$ is Hausdorff, let $x,y\in U$, and choose closed Hausdorff neighbourhoods $N_x$ of $x$ and $N_y$ of $y$. If $x\in \inter N_y$, then since $N_y$ is Hausdorff we can choose open sets $V_x$ and $V_y$ in $N_y$ such that $x\in V_x$, $y\in V_y$ and $V_x\cap V_y=\emptyset$;  then $W_x:=V_x\cap(\inter N_y)$ and $W_y=V_y\cap(\inter N_y)$ are open subsets of $\inter N_y$ with the same property, and they are open in $\hat A$. A similar argument works if $y\in N_x$. It remains to deal with the case where $x\notin \inter N_y$ and $y\notin \inter N_x$. Since $\hat A$ is locally locally-compact, $\inter N_y$ contains a compact neighbourhood $K$ of $y$. Since $N_y$ is Hausdorff, $K$ is closed in $N_y$, and since $N_y$ is closed in $\hat A$, $K$ is closed in $\hat A$. Now $(\inter N_x)\setminus K$ is open in $\hat A$, and contains $x$ because $x\notin \inter N_y$. Thus $(\inter N_x)\setminus K$ and $\inter K$ are disjoint open neighbourhoods of $x$ and $y$.
\end{proof}

\begin{cor}\label{corFellpts}
Suppose that $A$ is a $C^*$-algebra. Then
\[
V:=\{\pi\in\hat A:\pi\text{ is a Fell point and has a closed Hausdorff neighbourhood}\}
\]
is an open Hausdorff subset of $\hat A$.
\end{cor}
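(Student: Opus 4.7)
The plan is to reduce Corollary~\ref{corFellpts} directly to Proposition~\ref{openH} by writing $V$ as the intersection of two open sets.

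First I would observe that the set
\[
F := \{\pi \in \hat A : \pi \text{ is a Fell point}\}
\]
is open in $\hat A$. This is immediate from the very definition of a Fell point: if $\pi \in F$ then by definition there is a positive $a \in A$ and an open neighbourhood $W$ of $\pi$ in $\hat A$ such that $\rho(a)$ is a rank-one projection for every $\rho \in W$. The same element $a$ and the same neighbourhood $W$ then witness that every $\rho \in W$ is itself a Fell point, so $W \subseteq F$. Hence $F$ is open.

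Next I would write $V = U \cap F$, where $U$ is the set appearing in Proposition~\ref{openH}. Proposition~\ref{openH} tells us that $U$ is open and Hausdorff in $\hat A$, and the previous paragraph shows $F$ is open. Therefore $V$ is open, as the intersection of two open subsets of $\hat A$. Since $V \subseteq U$ and $U$ is Hausdorff, $V$ inherits the Hausdorff property as a subspace. This gives the conclusion.

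There is essentially no obstacle here: the only content beyond Proposition~\ref{openH} is the observation that the Fell-point condition is stable under passing to points of the witnessing neighbourhood, which is built into the definition of a Fell algebra recalled in the introduction.
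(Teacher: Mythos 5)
Your proof is correct and is essentially identical to the paper's: both write $V$ as the intersection of the open set of Fell points with the open Hausdorff set $U$ of Proposition~\ref{openH}, and deduce the Hausdorff property from $V\subseteq U$. Your added remark that openness of the Fell-point set follows directly from the definition is the same observation the paper leaves implicit.
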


\begin{proof}
The set of Fell points is an open subset of $\hat A$, so $V$ is the intersection of two open sets. It is Hausdorff because it is a subset of the Hausdorff set $U$ of Proposition~\ref{openH}.
\end{proof}

\begin{cor}
Suppose that $A$ is a $C^*$-algebra. Then the set $V$ of Corollary~\ref{corFellpts} is the spectrum of the ideal of continuous-trace elements of $A$.
\end{cor}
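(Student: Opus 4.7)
The plan is to prove that $V$ coincides with the open subset $\hat I=\{\pi\in\hat A:\pi(I)\ne 0\}$ of $\hat A$, where $I:=\overline{\mathfrak{m}(A)}$ is the ideal of continuous-trace elements. A preliminary step is to observe that $I$ itself has continuous trace: since $\hat I$ carries the subspace topology from $\hat A$ and is open, any positive $a\in\mathfrak{m}(A)$ has $\pi\mapsto\tr(\pi(a))$ continuous on $\hat I$, so $a\in\mathfrak{m}(I)$. Thus $\mathfrak{m}(I)\supseteq\mathfrak{m}(A)$ is dense in $I$, and $I$ has continuous trace. In particular $\hat I$ is Hausdorff and $I$ is a Fell algebra.

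For $\hat I\subseteq V$, fix $\pi_0\in\hat I$. Because $I$ is Fell, there is $b\in I^+$ such that $\rho(b)$ is a rank-one projection on an open neighbourhood $W$ of $\pi_0$ in $\hat I$. Since $\hat I$ is open in $\hat A$, $W$ is open in $\hat A$; and since $\rho(b)=0$ for $\rho\notin\hat I$, the element $b\in A^+$ exhibits $\pi_0$ as a Fell point of $\hat A$. To produce a closed Hausdorff neighbourhood, use density of $\mathfrak{m}(A)$ in $I$ and $\pi_0(I)\ne 0$ to pick $a\in\mathfrak{m}(A)^+$ with $\tr(\pi_0(a))>0$ (replacing a witness $a'\in\mathfrak{m}(A)$ with $a=a'^*a'$ if necessary). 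The function $g:\pi\mapsto\tr(\pi(a))$ is continuous on $\hat A$, vanishes off $\hat I$, and satisfies $g(\pi_0)>0$. Then $N:=\{\pi\in\hat A:g(\pi)\ge g(\pi_0)/2\}$ is closed in $\hat A$, contained in $\hat I$ (hence Hausdorff), and contains the open set $\{g>g(\pi_0)/2\}\ni\pi_0$, so $N$ is the required neighbourhood.

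For $V\subseteq\hat I$, take $\pi_0\in V$, choose $c\in A^+$ with $\rho(c)$ a rank-one projection on an open neighbourhood $W$ of $\pi_0$, and let $N$ be a closed Hausdorff neighbourhood of $\pi_0$. Since $\hat A$ is locally locally-compact, there is a compact neighbourhood $K$ of $\pi_0$ contained in $W\cap\inter N$. As $N$ is Hausdorff, $K$ is closed in $N$; as $N$ is closed in $\hat A$, $K$ is closed in $\hat A$; and $K$ inherits Hausdorff compactness from $N$. Urysohn's lemma in the normal space $K$, applied to the disjoint closed sets $\{\pi_0\}$ and $K\setminus\inter K$, yields $f:K\to[0,1]$ continuous with $f(\pi_0)=1$ and $f\equiv 0$ on $K\setminus\inter K$. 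Extending by zero gives $\tilde f\in C_b(\hat A)$; continuity at points of $K\setminus\inter K$ holds because $\tilde f$ vanishes there and is small on nearby points of $K$, and continuity elsewhere is automatic (either from continuity of $f$ on $\inter K$ or from the open set $\hat A\setminus K$ on which $\tilde f\equiv 0$). Via Dauns-Hofmann, $\tilde f$ is a positive central multiplier of $A$; set $b:=\sqrt{\tilde f}\,c\,\sqrt{\tilde f}\in A^+$. Then $\pi(b)=\tilde f(\pi)\pi(c)$, so $\tr(\pi(b))=\tilde f(\pi)\tr(\pi(c))$ equals $\tilde f(\pi)$ on $W$ (where $\tr(\pi(c))=1$) and $0$ off $K\subseteq W$. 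Since $\supp\tilde f\subseteq K$, the two expressions agree, giving $\tr(\pi(b))=\tilde f(\pi)$ for all $\pi\in\hat A$, which is continuous. Hence $b\in\mathfrak{m}(A)^+$, and $\pi_0(b)=\pi_0(c)\ne 0$ shows $\pi_0\in\hat I$.

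The main obstacle is constructing the bump $\tilde f$ with sufficient continuity on the non-Hausdorff space $\hat A$: this is exactly where the hypothesis of a closed Hausdorff neighbourhood is used, ensuring that a Urysohn bump built in a compact Hausdorff neighbourhood extends continuously by zero to $\hat A$. The Dauns-Hofmann step, promoting $\tilde f$ to a central multiplier acting on the Fell element $c$, is routine once $\tilde f$ is in hand.
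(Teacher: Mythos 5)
Your proof is correct, but it takes a genuinely different route from the paper's. The paper does not argue directly with $\mathfrak{m}(A)$: it shows that $\pi\in V$ if and only if $\pi$ satisfies the two conditions of Lemma~2.2 of an Huef--Williams \cite{aHW} (membership in the spectrum of a continuous-trace ideal, plus a base of compact neighbourhoods closed in $\hat A$), and then invokes that lemma to identify $V$ with the spectrum of $\overline{\mathfrak{m}(A)}$. You instead prove the identification $V=\hat I$ from scratch: for $\hat I\subseteq V$ you use that $I=\overline{\mathfrak{m}(A)}$ itself has continuous trace (correctly justified via $\mathfrak{m}(A)^+\subseteq\mathfrak{m}(I)^+$ and Dixmier's definition) together with a superlevel set of a trace function to manufacture the closed Hausdorff neighbourhood; for $V\subseteq\hat I$ you build a Urysohn bump on a compact Hausdorff neighbourhood $K$, check that it extends continuously by zero because $K$ is closed in $\hat A$ (which is exactly where the closed Hausdorff neighbourhood is needed), and push it onto a Fell element via Dauns--Hofmann to produce an element of $\mathfrak{m}(A)^+$ not killed by $\pi_0$. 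All the delicate points are handled: the $0\cdot\infty$ issue in $\tr(\pi(b))=\tilde f(\pi)\tr(\pi(c))$ is avoided because $\supp\tilde f\subseteq K\subseteq W$, and local local-compactness of $\hat A$ (Dixmier, Corollaire~3.3.8) holds for arbitrary $C^*$-algebras, not just Fell ones. What your approach buys is self-containment -- it bypasses the external lemma entirely and makes visible why the two defining properties of $V$ correspond to the two halves of ``continuous-trace ideal''; what the paper's approach buys is brevity and a cleaner reduction to an already-established criterion. Interestingly, your Urysohn-plus-Dauns--Hofmann construction is essentially the mechanism hidden inside the cited lemma of \cite{aHW}, so the two proofs are cousins at a deeper level.
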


\begin{proof}
We show that $\pi\in V$ if and only if $\pi$ satisfies conditions (i) and (ii) of \cite[Lemma~2.2]{aHW}, and then our corollary follows from that lemma. 

First suppose that $\pi\in V$. Then since $V$ is open it is the spectrum of an ideal $J$ of $A$. 
Since  $\pi$ is a Fell point of $A$ it is also a Fell point of $J$.  To see this, let $a\in A^+$ and $W$ an open neighbourhood of $\pi$ in $\hat A$ such that $\rho(a)$ is a rank-one projection for all $\rho\in W$.  Let $f\in C_c(\hat J)^+$ such that $f$ is identically one on a neighbourhood $W'$ of $\pi$ in $\hat J$.  Using the Dauns-Hofmann theorem,  $f\cdot a$ is in $J^+$, and $\rho(f\cdot a)=f(\rho)\rho(a)=\rho(a)$ is a rank-one projection for $\rho\in W\cap W'$.
Thus $J$ is a continuous-trace algebra by \cite[Proposition~4.5.4]{D}, and $\pi$ satisfies (i). 

To verify (ii), note that $\pi$ has a closed Hausdorff neighbourhood $N$, and a neighbourhood base of compact sets \cite[Corollary~3.3.8]{D}. Since $\pi\in \inter N$, the compact neighbourhoods $K$ with $K\subset\inter N$ also form a neighbourhood base. Since $N$ is Hausdorff and closed, such $K$ are also closed in $\hat A$.

Next suppose that $\pi$ satisfies (i) and (ii). Then $\pi$ belongs to the spectrum of a continuous-trace ideal $J$, and is a Fell point in $\hat J$; since $\hat J$ is open in $\hat A$, it trivially follows that $\pi$ is a Fell point in $\hat A$ and that $\hat J$ is an open Hausdorff neighbourhood of $\pi$. Now (ii) implies that $\hat J$ contains a closed neighbourhood, and $\pi\in V$. 
\end{proof}


\begin{thebibliography}{aa} 


    

\bibitem{AaH} R.J. Archbold and A. an Huef, Strength of convergence and multiplicities in the spectrum of a $C^*$-dynamical system, \emph{Proc. London Math. Soc.}  \textbf{96} (2008), 545--581.

\bibitem{AS} R.J. Archbold and D.W.B. Somerset, Transition probabilities and trace functions for $C^*$-algebras, \emph{Math. Scand.} \textbf{73} (1993), 81--111.


\bibitem{B} N. Bourbaki, \emph{General Topology}, Hermann, Paris, 1966.

\bibitem{BaH} J.H. Brown and A. an Huef, Decomposing the $C^*$-algebras of groupoid extensions, \emph{Proc. Amer. Math. Soc.}, to appear; arXiv:1105.0718.
    
    
\bibitem{C} L.O. Clark, Classifying the types of principal groupoid {$C^*$}-algebras, \emph{J. Operator Theory} \textbf{57} (2007), 251--266.

\bibitem{CaH2} L.O. Clark and A. an Huef, The representation theory of $C^*$-algebras associated to groupoids, \emph{Math. Proc. Cambridge Philos. Soc.} \textbf{153} (2012), 167--191.

\bibitem{D} J. Dixmier, \emph{$C^*$-Algebras}, North-Holland, Amsterdam, 1977.


\bibitem{DD} J. Dixmier and A. Douady, Champs continus d'espaces
hilbertiens et de $C^*$-alg\` ebres, \emph{Bull. Soc. Math. France} \textbf{91}
(1963), 227--284.

\bibitem{F} J.M.G. Fell, The structure of algebras of operator
fields, \emph{Acta Math.} \textbf{106} (1961), 233--280.

\bibitem{green} P. Green, $C^*$-algebras of transformation groups with smooth orbit space, \emph{Pacific J. Math.} \textbf{72} (1977), 71--97.

\bibitem{HaH} R. Hazlewood and A. an Huef, Strength of convergence in the orbit space of a groupoid, \emph{J. Math. Anal. Appl.} \textbf{383} (2011), 1--24. 

\bibitem{aH} A. an Huef, The transformation groups whose
$C^*$-algebras are Fell algebras, \emph{Bull. London Math. Soc.} \textbf{33}
(2001), 73--76.

\bibitem{aHKS} A. an Huef, A. Kumjian and A. Sims, A Dixmier-Douady theorem for Fell algebras,  \emph{J. Funct. Anal.} \textbf{260} (2011), 1543--1581.


\bibitem{aHRW} A. an Huef, I. Raeburn and D.P. Williams, Properties preserved under Morita equivalence, \emph{Proc. Amer. Math. Soc.} \textbf{135} (2007), 1495--1503.


\bibitem{aHW} A. an Huef and D.P. Williams, Ideals in transformation-group $C^*$-algebras, \emph{J. Operator Theory} \textbf{48} (2002), 535--548.


\bibitem{Kelley} J.L. Kelley, \emph{General Topology}, Van Nostrand, New York, 1955.

\bibitem{Kumjian} A. Kumjian, Preliminary algebras arising from local homeomorphisms, \emph{Math. Scand.} \textbf{52} (1983),  269--278. 

\bibitem{K} A. Kumjian, On $C^*$-diagonals, \emph{Canad. J. Math.} \textbf{38} (1986), 969--1008.

\bibitem{Kumjian2} A. Kumjian, On equivariant sheaf cohomology and elementary {$C^*$}-bundles, \emph{J. Operator Theory} \textbf{20} (1988), 207--240.
    
\bibitem{KPRR} A. Kumjian, D. Pask, I. Raeburn and J. Renault, Graphs, groupoids and Cuntz-Krieger algebras, \emph{J. Funct. Anal.} \textbf{144} (1997), 505--541.


\bibitem{MW} P.S. Muhly and D.P. Williams, Continuous trace groupoid $C^*$-algebras, \emph{Math. Scand.} \textbf{66} (1990), 231--241.

\bibitem{MW2} P.S. Muhly and D.P. Williams, Continuous trace groupoid $C^*$-algebras II, \emph{Math. Scand.} \textbf{70} (1992), 127--145.

\bibitem{M} J.R. Munkres, \emph{Topology}, second edition, Prentice Hall, Upper Saddle River, 2000.

\bibitem{P} G.K. Pedersen, \emph{Analysis Now}, Graduate Texts in Math., vol.~118, Springer-Verlag, New York, 1989. 

\bibitem{Ped} G.K. Pedersen, \emph{$C^*$-Algebras and their Automorphism Groups}, London Math. Soc. Monographs, vol.~14, Academic Press, London, 1979.

\bibitem{CBMS} I. Raeburn, \emph{Graph Algebras}, CBMS Regional Conference Series in Mathematics, vol. 103, Amer. Math. Soc., Providence, 2005.

\bibitem{RT} I. Raeburn and J.L. Taylor, Continuous trace $C^*$-algebras with given Dixmier-Douady class, \emph{J. Austral. Math. Soc (Series A)} \textbf{38} (1985), 394--407.

\bibitem{RW} I. Raeburn and D.P. Williams, Topological invariants associated with the spectrum of crossed product $C^*$-algebras, \emph{J. Funct. Anal.} \textbf{116} (1993), 245--276.

\bibitem{tfb} I. Raeburn  and D.P. Williams, \emph{Morita Equivalence and Continuous-Trace {$C^*$}-Algebras}, Math. Surveys and Monographs, vol. 60, Amer. Math. Soc., Providence, 1998.

\bibitem{Ren} J. Renault, \emph{A Groupoid Approach to $C^*$-Algebras}, Springer-Verlag, Berlin, 1980.

\bibitem{Rud} W. Rudin, \emph{Real and Complex Analysis}, second edition, McGraw Hill, Boston, 1991.

\bibitem{S} A.K. Seda, On the continuity of Haar measure on topological groupoids, \emph{Proc. Amer. Math. Soc.} \textbf{96} (1986), 115--120.



\bibitem{W} D.P. Williams, \emph{Crossed Products of $C^*$-Algebras}, Math. Surveys and Monographs, vol. 134, Amer. Math. Soc., Providence, 2007.

\bibitem{zettl2} H.H. Zettl, Ideals in Hilbert modules and invariants under strong Morita equivalence of $C^*$-algebras, \emph{Arch. Math. (Basel)} \textbf{39} (1982), 69--77.


\end{thebibliography}
\end{document}